\documentclass{article}

\usepackage[english]{babel} 
\usepackage{enumitem}
\usepackage[utf8]{inputenc} 
\usepackage[a4paper,left=2cm,right=2cm,bottom=3cm]{geometry} 
\usepackage{indentfirst} 
\usepackage{amsmath,amssymb,amsfonts,amsthm} 
\numberwithin{equation}{section}
\usepackage{bm} 
\usepackage{graphicx} 
\usepackage[export]{adjustbox} 
\usepackage{pdflscape} 
\usepackage{fancyhdr} 
\usepackage{natbib} 
\setcitestyle{numbers,square}
\usepackage{flafter} 
\usepackage[framemethod=tikz]{mdframed} 
\usepackage{color} 
	\definecolor{yellow-green}{rgb}{0.6, 0.8, 0.2}
	\definecolor{viridian}{rgb}{0.25, 0.51, 0.43}
\usepackage{wrapfig} 
\usepackage{lipsum} 

\usepackage[all]{xy}
\usepackage[auth-lg,noblocks]{authblk}
\usepackage{framed}
\usepackage{tikz}
\usepackage{quiver}
\usetikzlibrary{hobby}

\usepackage{mathrsfs}

\usepackage{dsfont}

\allowdisplaybreaks[1]

\graphicspath{ {Images/} } 

\usepackage{multicol} 
\usepackage{float} 

\usepackage{nicematrix}
\usetikzlibrary{patterns}
\usetikzlibrary{matrix,decorations.pathreplacing}
\usetikzlibrary{decorations.pathreplacing,calligraphy}
\usepackage[colorlinks=true,linkcolor=cyan,citecolor=viridian]{hyperref} 
\hypersetup{
    pdfcreator={},
    pdfproducer={LaTeX},
    hypertexnames=false,  
    linktocpage=true,
    colorlinks=true
}
\usepackage{prettyref}
\usepackage[nameinlink]{cleveref}
\title{Non-Hitchin Borel Anosov representations from surface groups to $\mathrm{SL}_{3k}\mathbb{R}$}

\author{Zhufeng Yao\thanks{Department of Mathematics, National University of Singapore, 119077, Singapore, \href{mailto:yaozhufeng@u.nus.edu}{yaozhufeng@u.nus.edu}}  \mbox{ and} Junming Zhang\thanks{Chern Institute of Mathematics and LPMC, Nankai University, Tianjin 300071, China, \href{mailto:junmingzhang@mail.nankai.edu.cn}{junmingzhang@mail.nankai.edu.cn}}}

\date{\vspace{-2em}}

\newtheorem{theorem}{Theorem}[section]
\newtheorem{corollary}[theorem]{Corollary}
\newtheorem{proposition}[theorem]{Proposition}
\newtheorem{lemma}[theorem]{Lemma}
\newtheorem{definition}[theorem]{Definition}
\newtheorem{remark}[theorem]{Remark}

\newrefformat{lem}{Lemma \ref{#1}}
\newrefformat{coro}{Corollary \ref{#1}}
\newrefformat{thm}{Theorem \ref{#1}}

\newrefformat{prop}{Proposition \ref{#1}}
\newrefformat{rem}{Remark \ref{#1}}
\newrefformat{example}{Example \ref{#1}}
\newrefformat{defn}{Definition \ref{#1}}
\newrefformat{section}{Section \ref{#1}}
\newrefformat{conj}{Conjecture \ref{#1}}
\newrefformat{apdx}{Appendix \ref{#1}}
\newrefformat{fact}{Fact \ref{#1}}
\newcommand{\dd}{{\mathrm d}}
\newcommand{\EE}{{\mathcal{E}}}
\newcommand{\FF}{{\mathcal{F}}}
\newcommand{\KK}{{\mathcal{K}}}

\newcommand{\sslash}{
	\mathchoice{\mathbin{\mkern-3mu/\mkern-6mu/\mkern-3mu}}
	{\mathbin{\mkern-3mu/\mkern-6mu/\mkern-3mu}}
	{\mathbin{\mkern-2mu/\mkern-5mu/\mkern-1mu}}
	{\mathbin{\mkern-2mu/\mkern-5mu/\mkern-1mu}}
}
\newcommand{\iu}{{\mathrm i}}

\DeclareMathOperator{\End}{End}

\begin{document}

\pagenumbering{gobble} 
\maketitle

\pagenumbering{arabic} 
\setcounter{section}{0}
\setcounter{page}{1}
\vspace{-1em}

\begin{abstract}
We use the Labourie--Wentworth's formula and the thermodynamic formalism to show that, along the slice constructed by Bronstein--Davalo~\cite{bronstein2025anosov}, the logarithmic top-eigenvalue length spectrum has a uniformly positive second variation near the Barbot representation. 

As a major application, we show that every closed surface group admits a non-Hitchin Borel Anosov representation into $\mathrm{SL}_{3k}\mathbb{R}$ for every $k\geqslant 1$. In particular, we obtain the first such examples in the even dimensions $6k$. We also study the local behavior of related objects of this slice around the Barbot representation, including the Lyapunov exponent of the flat bundle, the Hausdorff dimension of the limit set, and the Hilbert entropy of the representation.
\end{abstract}

\small\textbf{Keywords. }{Borel Anosov representations, Barbot representations, Higgs bundles.}

\textbf{2020 Mathematics Subject Classification. }
Primary 20H10; Secondary 14H60, 22E40.

\large

\tableofcontents

\section{Introduction}
Let $S$ be a closed oriented surface of genus at least $2$. Using the theory of Higgs bundles, Hitchin~\cite{HITCHIN1992449} identified some distinguished connected components of the character variety \[\mathfrak{X}(S,\mathrm{SL}_n\mathbb{R}):=\mathrm{Hom}(\pi_1(S), \mathrm{SL}_n\mathbb{R})\sslash\mathrm{SL}_n\mathbb{R},\] now known as the \emph{Hitchin component}. For $n=2$, this canonically recovers the classical Teichmüller space of $S$. More generally, the Hitchin component is homeomorphic to a real cell, offering a natural framework to extend Teichmüller theory to higher-rank Lie groups and laying the foundation for higher Teichmüller theory. The elements within this component are referred to as Hitchin representations. 

Labourie \cite{labourie2006anosov} and Fock--Goncharov \cite{fock2006modulispaceslocalsystems} proved that Hitchin representations are discrete and faithful independently. Moreover, Labourie showed that Hitchin representations are irreducible and possess a strong dynamical property known as the \emph{Anosov property}. These results position Hitchin representations as natural higher-rank analogs of Fuchsian representations, mirroring many of the geometric and dynamical features of classical Teichmüller theory.

We now introduce the concept of Anosov representations more concretely. Let $\lambda_i(g)$ denote the eigenvalues of $g\in \mathrm{SL}_n\mathbb{R}$, listed in non-increasing order of their moduli. For $g\in \mathrm{SL}_2 \mathbb{R}$, we use $\lambda(g)$ to abbreviate $\lambda_1(g)$. 

Equip the surface $S$ with a Riemann surface structure $X$. Let $g_{X}$ be the conformal hyperbolic metric on $X$ with curvature $-1$. We denote by $\KK_X$ the canonical line bundle of $X$. We fix a square root $\KK_X^{1/2}$ of $\KK_X$, which is equivalent to a spin structure of $X$. This gives us a lift $j_X\colon\pi_1(S)\to\mathrm{SL}_2\mathbb{R}$ of the holonomy representation of $g_X$. For any $\gamma\in \pi_1(S)$, we define $\ell_X(\gamma) := 2\log |\lambda(j_X(\gamma))|$. For convenience of this paper, we define Anosov representations in reference to the work by Kassel and Potrie: 

\begin{definition}[{\cite[Proposition~1.5]{kasselpotrieeigenvaluegap}}]\label{defn:eigenvalue-gap}
Let $1\leqslant k\leqslant n-1$. A representation $\rho:\pi_1(S)\to \mathrm{SL}_n\mathbb{R}$ is called \emph{$k$-Anosov} if there exist constants $a,A>0$ such that for any $\gamma\in \pi_1(S)$,
\[
\log \frac{|\lambda_k(\rho(\gamma))|}{|\lambda_{k+1}(\rho(\gamma))|} > a \ell_X(\gamma) - A.
\]
\end{definition}

Note that for any two complex structures $X_1$ and $X_2$ on $S$, there exist constants $a'>1$ and $A'>0$ such that for all $\gamma\in \pi_1(S)$,
\[
\frac{1}{a'} \ell_{X_1}(\gamma)-A' \leqslant \ell_{X_2}(\gamma)\leqslant  a'\ell_{X_1}(\gamma)+A'.
\]
As a consequence, the choice of the underlying complex structure $X$ does not affect the definition of a $k$-Anosov representation. Note also that $k$-Anosovness directly implies $(n-k)$-Anosoveness, so there is only one convention for Anosov representations to $\mathrm{SL}_3 \mathbb{R}$. For further foundational and modern developments on Anosov representations, we refer the reader to Guichard--Wienhard~\cite{GWAnosov}, Kapovich--Leeb--Porti~\cite{kapovich2014morse, kapovich2017anosov, anosovmorseandbuilding}, Gu\'eritaud--Guichard--Kassel--Wienhard~\cite{Gu_ritaud_2017}, Bochi--Potrie--Sambarino~\cite{dominatedsplitting}, and many others.

A representation $\rho:\pi_1(S)\to \mathrm{SL}_n\mathbb{R}$ is called \emph{Borel Anosov} if it is $k$-Anosov for every $1 \leqslant k \leqslant n-1$. As previously mentioned, Labourie~\cite{labourie2006anosov} established that Hitchin representations into $\mathrm{SL}_n\mathbb{R}$ are Borel Anosov. Determining whether there exist Borel Anosov representations in $\mathrm{SL}_n\mathbb{R}$ that do not belong to the Hitchin component remains an interesting problem (see Canary~\cite[Question~50.6]{canary2021anosov} and related discussions in Canary--Tsouvalas~\cite{canary2020topological}). When $n > 1$ is odd, such examples can be easily constructed by taking the direct sum of the symmetric power $\operatorname{Sym}^{n-2}(j_X)$ of the Fuchsian representation $j_X$ with the trivial representation $\mathbf{1}:\pi_1(S)\to \mathbb{R}^*$. Conversely, when $n > 2$ is even, the question remains open. Below is a partial list of known results concerning the restrictions on even-dimensional Borel Anosov subgroups. 

\begin{enumerate}
\item \cite{canary2020topological} proved that every Borel Anosov representation of a closed surface group into
$\mathrm{SL}_4\mathbb{R}$ is irreducible. They also proved that every
torsion-free hyperbolic group admitting a Borel Anosov representation into $\mathrm{SL}_3\mathbb{R}$ or $\mathrm{SL}_4\mathbb{R}$ is either a free group or a closed surface group. 

\item Tsouvalas~\cite{Tsouvalasevenborelanosov} proved that every hyperbolic group admitting a Borel Anosov representation into
$\mathrm{SL}_{4k+2}\mathbb{R}$ is either virtually free or virtually a closed surface group. 

\item Davalo~\cite{davalomaximal} proved that every maximal Borel Anosov representation of a closed surface group into $\mathrm{Sp}_4\mathbb{R}$ is a Hitchin representation.
\end{enumerate}

In this paper, we will construct a family of Borel Anosov representations from a surface group into $\mathrm{SL}_{6k}\mathbb{R}$ that are not Hitchin for any $k\in\mathbb{N}^*$. Our starting point is the \emph{Barbot representation} $(j_X\oplus\mathbf{1})\colon\pi_1(S)\to\mathrm{SL}_3\mathbb{R}$, which is Borel Anosov. In \cite{bronstein2025anosov}, Bronstein--Davalo studied a slice deforming from it, consisting entirely of Borel Anosov representations. 

The definition of the Bronstein--Davalo slice involves the Higgs bundle and the non-Abelian Hodge correspondence. For a closed oriented hyperbolic surface $S$ equipped with a Riemann surface structure $X$, by the celebrated non-Abelian Hodge correspondence proved by Hitchin \cite{hitchin1987self}, Donaldson \cite{donaldson1987twisted}, Corlette \cite{corlette1988flat}, Simpson \cite{simpson1988constructing,simpson1992higgs}, reductive representations $\pi_1(S)\to\mathrm{GL}_n\mathbb{C}$ correspond to polystable $\mathrm{GL}_n\mathbb{C}$-Higgs bundles, which is a holomorphic concept consisting of a holomorphic vector bundle $\EE$ with rank $n$, degree $0$ and a Higgs field $\Phi\in\mathrm{H}^0(X,\operatorname{End}(\EE)\otimes\KK_X)$. Moreover, the non-Abelian Hodge correspondence $\operatorname{NAH}$ establishes a homeomorphism between the moduli space of polystable Higgs bundles and the character variety, which is real-analytic in the smooth part.

Bronstein--Davalo studied the following cyclic Higgs bundle 
 \[(\EE,\Phi_q):=\left(\KK_X^{1/2}\oplus\mathcal{O}_X\oplus\KK_X^{-1/2},\begin{pmatrix}
     &q&\\&&q\\\mathds{1}&&
 \end{pmatrix}\right),\] 
 where $\mathds{1}\colon\KK_X^{1/2}\to\KK_X^{-1/2}\otimes\KK_X$ is the tautological isomorphism and $q\in\mathrm{H}^0(X,\KK_X^{3/2})$. When $q=0$, $\operatorname{NAH}(\EE,\Phi_0)$ is the Barbot representation $j_X\oplus\mathbf{1}$. For every $q$, they proved the following theorem:
\begin{theorem}[{\cite[Theorem 1.1]{bronstein2025anosov}}]\label{thm:BD}
    Through the non-Abelian Hodge correspondence, \[\operatorname{NAH}(\EE,\Phi_q)\colon\pi_1(S)\to\mathrm{SL}_3\mathbb{R}\] is Borel Anosov for any $q\in\mathrm{H}^0(X,\KK_X^{3/2})$.
\end{theorem}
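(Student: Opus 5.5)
Since $\mathrm{SL}_3\mathbb{R}$ has rank two, Borel Anosov here just means $1$-Anosov; $2$-Anosovness then follows by duality. The plan is to prove a uniform eigenvalue-gap estimate of the form in Definition~\ref{defn:eigenvalue-gap}, with $k=1$, using the harmonic metric of $(\EE,\Phi_q)$ and a domination argument.

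\textbf{Step 1: realness and the symmetric structure.} The Higgs bundle is cyclic: it is invariant under $\Phi\mapsto\omega\Phi$ for a suitable gauge $\mathrm{diag}(\omega,1,\omega^{-1})$-type action. It also carries the real structure pairing $\KK_X^{1/2}$ with $\KK_X^{-1/2}$. By uniqueness of the harmonic metric, $h$ is diagonal, $h=\mathrm{diag}(h_1,h_0,h_1^{-1})$ with $h_0=1$, and the holonomy is real, so it lies in $\mathrm{SL}_3\mathbb{R}$.

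\textbf{Step 2: the Hitchin equations.} The Hitchin equation reduces to one scalar PDE for the metric on $\KK_X^{1/2}$. Writing $h_1$ in terms of $g_X$, one gets an equation of the type
\[
\Delta u = e^{u} - |q|^2 e^{-u/2}\cdots,
\]
schematically a Toda-type system. Applying the maximum principle gives comparison bounds; for instance, $h_1$ is controlled above and below by the $q=0$ solution times explicit factors.

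\textbf{Step 3: a cone field for the flat connection.} Let $D=\nabla_h+\Phi+\Phi^{*_h}$. Along a unit-speed geodesic for a suitable conformal metric, consider the parallel transport ODE. The term $\mathds 1\colon\KK_X^{1/2}\to\KK_X^{-1/2}\otimes\KK_X$ together with its adjoint produces a hyperbolic $2\times2$ block. This block is uniformly expanding and contracting, with rate bounded below by the $h$-norm of $\mathds 1$, which by Step 2 is at least a positive constant after rescaling the metric to $g=|\mathds 1|_h^2$-type.

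The $\mathcal{O}_X$ direction couples through $q$. I would show that the top eigen-direction (the expanding direction built in $\KK_X^{1/2}\oplus\KK_X^{-1/2}$, perturbed) dominates everything else, via an invariant cone field argument in the spirit of Bochi--Potrie--Sambarino dominated splittings. Concretely, I want $\frac{d}{dt}\log(\sigma_1/\sigma_2)\ge c>0$ along geodesics, which in turn gives the $1$-Anosov inequality with $\ell$ measured in a metric quasi-isometric to $g_X$.

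\textbf{Main obstacle.} The hard step is uniformity in $q$ and at the zeros of $q$. When $q$ is large, the $q$-entries dominate $\mathds 1$, so the expanding block is no longer obviously the $\mathds 1$-block. I would handle this by using the $h$-normalized metric $g_q=|\mathds 1|^2_h+|q|_h^{4/3}$-type, which is nondegenerate since $\mathds 1$ never vanishes. I would then check that in each regime, the ratio between the dominant eigenvalue of the symmetric part of $\Phi+\Phi^*$ and the second one stays bounded below. The real symmetric $3\times3$ matrix
\[
\begin{pmatrix}0&a&b\\ a&0&a\\ b&a&0\end{pmatrix},\qquad a\sim|q|_h,\; b\sim|\mathds 1|_h,
\]
has a simple top eigenvalue whenever $b>0$, with gap bounded below relative to its norm. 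This spectral gap is what should make the cone argument work uniformly.
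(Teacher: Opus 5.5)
The paper does not prove this theorem; it quotes it from Bronstein--Davalo, so your sketch has to stand on its own. Your reduction to $1$-Anosovness and the diagonal form of the harmonic metric are fine, but the outline has two genuine gaps.

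First, your final spectral claim is true for the matrix you wrote, but it is not the relevant condition. In an $H_q$-unitary frame, for a unit tangent vector $v$,
\[
(\Phi_q+\Phi_q^{*})(v)=\begin{pmatrix}0&\alpha&\bar\beta\\ \bar\alpha&0&\alpha\\ \beta&\bar\alpha&0\end{pmatrix},\qquad \det\bigl(\lambda\,\mathrm{Id}-(\Phi_q+\Phi_q^{*})(v)\bigr)=\lambda^3-(2a^2+b^2)\lambda-2\operatorname{Re}(\alpha^2\beta),
\]
where $a=|\alpha|$ and $b=|\beta|$. Rotating $v$ by an angle $\theta$ multiplies $\alpha^2\beta$ by $e^{3\iu\theta}$.

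Your real matrix is the case $\operatorname{Re}(\alpha^2\beta)=a^2b$. Its negative occurs at $-v$, which you must control anyway, since $1$-Anosovness also bounds $|\lambda_2/\lambda_3|$ (pass to $\gamma^{-1}$). Your matrix has spectrum $\tfrac{b\pm\sqrt{b^2+8a^2}}{2}$ and $-b$. Its lower gap is therefore $\tfrac12|3b-\sqrt{b^2+8a^2}|$, which vanishes at $a=b$ (spectrum $2b,-b,-b$). Since $a/b$ vanishes at the zeros of $q$, any argument based on a pointwise gap needs $\sup_X a/b<1$.

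Write $H_q=\operatorname{diag}(h_q,1,h_q^{-1})$ and $g_q:=h_q^{-2}$; this is exactly your ``$|\mathds 1|^2_h$-type'' metric, for which $b\equiv 1$. Then $a/b=|q|_{g_q}$. Off the zeros of $q$, the local Hitchin equation $\partial_z\partial_{\bar z}\log h_q=|q|^2h_q-h_q^{-2}$ gives $\partial_z\partial_{\bar z}\log|q|^2_{g_q}=3h_q^{-2}(|q|^2_{g_q}-1)$. The (strong) maximum principle then yields $\sup_X|q|_{g_q}<1$, equivalently $K(g_q)=-4(1-|q|^2_{g_q})<0$.

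This is the key estimate, and it is missing from your Step 2. It also shows your ``main obstacle'' is misdiagnosed: the $q$-entries never dominate the $\mathds 1$-entries, so the regime you want to treat with $|\mathds 1|^2_h+|q|^{4/3}_h$ never occurs.

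Second, even a uniform pointwise gap does not give an invariant cone field. The transport equation along $c$ is non-autonomous. The eigenlines of $(\Phi_q+\Phi_q^{*})(c')$ move relative to $\nabla_q$-parallel frames at a rate involving derivatives of $q$ and $h_q$ and the curvature of $c$. You never control these, and they are not a priori small compared with the gap.

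The missing idea is parallelism. The computation in Proposition~\ref{prop:parallelism} uses only the relation $g=h^{-2}$, not the Hitchin equation. Hence, for every $q$, the $\mathds 1$-part of $\Phi_q+\Phi_q^{*}$ is $\nabla_q$-parallel along unit-speed $g_q$-geodesics. In the adapted parallel unitary frame the transport equation becomes $\sigma'=-(\operatorname{diag}(-1,0,1)+B(s))\sigma$. Here $B$ has the form \eqref{eq:Bmatrix} with $Q_1^2+Q_2^2=|q|^2_{g_q}$, and no derivatives of $q$ or $h_q$ enter.

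For $F=|\sigma_1|^2-|\sigma_2|^2-|\sigma_3|^2$ one finds
\[
F'=2|\sigma_1|^2+2|\sigma_3|^2+4\sqrt2\,Q_1\operatorname{Re}(\bar\sigma_2\sigma_3).
\]
This is positive on $\{F=0\}\setminus\{0\}$ precisely when $|Q_1|<1$; symmetrically, the condition at the contracting end is $|Q_2|<1$. So the bound $\sup_X|q|_{g_q}<1$ gives uniformly strictly invariant cones over the geodesic flow of the negatively curved metric $g_q$, which is orbit equivalent to that of $g_X$. That is what would turn your outline into a proof of $1$-Anosovness, and hence of Borel Anosovness.
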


Below, we fix a nonzero holomorphic differential $q$ and study the behavior of the top eigenvalue spectrum of $\rho_{t}:=\operatorname{NAH}(\EE,\Phi_{tq})$, where $t\in\mathbb{R}$, when $|t|$ is sufficiently small. More precisely, we prove the following domination theorem, which plays an important role in the applications below:

\begin{theorem}\label{thm:Hilbert domination}
There exists $\varepsilon_0=\varepsilon_0(q)>0$ and $m_0 = m_0(q)>  0$  such that for any $t\in(-\varepsilon_0,\varepsilon_0)$ and $\gamma\in \pi_1(S)$,
\[
\log |\lambda_1(\rho_t(\gamma))|\geqslant \left(\frac{1}{2}+m_0t^2\right)\cdot\ell_X(\gamma), \quad -\log |\lambda_3(\rho_t(\gamma))|\geqslant \left(\frac{1}{2}+m_0t^2\right)\cdot\ell_X(\gamma).
\]
\end{theorem}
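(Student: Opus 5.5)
The plan is to treat $L_t(\gamma):=\log|\lambda_1(\rho_t(\gamma))|$ as a real-analytic family of length functions and to prove a uniform lower bound on its normalized second derivative at $t=0$. The $\lambda_3$ statement follows from the $\lambda_1$ statement by a symmetry of the family. The Higgs bundle $(\EE,\Phi_{tq})$ is cyclic, and the gauge transformation $\mathrm{diag}(-1,1,-1)$ (or the duality $\EE\to\EE^*$ combined with $t\mapsto -t$) shows that $\rho_{-t}$ is conjugate to $\rho_t$ and that $\rho_t^{*}$, the contragredient, is conjugate to $\rho_{\pm t}$. Consequently $-\log|\lambda_3(\rho_t(\gamma))|=\log|\lambda_1(\rho_{\pm t}(\gamma))|$, and $L_t$ is even in $t$. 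So it suffices to prove the first inequality.

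\textbf{Step 1 (first-order data).} By \cref{thm:BD}, every $\rho_t$ is Borel Anosov, so $\lambda_1(\rho_t(\gamma))$ is a simple eigenvalue. Since NAH is real-analytic on the smooth locus and $t\mapsto\rho_t$ is analytic, $L_t(\gamma)$ is analytic in $t$. At $t=0$ we have $L_0(\gamma)=\tfrac12\ell_X(\gamma)$, and evenness gives $\partial_t L_t(\gamma)|_{t=0}=0$. We then compute the second variation using the Labourie--Wentworth formula for derivatives of eigenvalue length functions along Higgs-bundle deformations. The first-order variation of the flat connection is $\dot\Phi+\dot\Phi^{*_h}$ with $\dot\Phi=\begin{pmatrix}&q&\\&&q\\&&\end{pmatrix}$, where $h$ is the harmonic metric of the Barbot representation; the metric variation vanishes at first order by the symmetry. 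Applying the second-variation formula then expresses
\[
\partial_t^2 L_t(\gamma)|_{t=0}=\int_{\gamma} F\,ds_X
\]
as the integral along the closed $g_X$-geodesic of a function $F$ on the unit tangent bundle $T^1X$. Here $F$ is built from $|q|^2_{g_X}$ and from the solution of a linear (Green's-operator type) equation for the off-diagonal terms coupling the $\KK_X^{1/2}$ line with $\mathcal{O}_X$. This is the Fuchsian analogue of Wolpert's second variation formula.

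\textbf{Step 2 (positivity via thermodynamic formalism; main obstacle).} We need $\int_\gamma F\geqslant 2m\,\ell_X(\gamma)$ uniformly in $\gamma$, equivalently $\int F\,d\mu>0$ for every geodesic-flow-invariant probability measure $\mu$ on $T^1X$. A pointwise sign for $F$ is unlikely. Instead I would first try to show that $F$ is cohomologous, via a Livšic coboundary $Xu$ which integrates to zero on closed orbits, to a function $F'\geqslant c|q|^2_{g_X}\circ\pi$. Alternatively, one can exploit the fact that $\partial_t^2 L$ is a variance-like quantity, as in the pressure-metric argument of Bridgeman--Canary--Labourie--Sambarino: $F$ differs from $|\dot\ell|^2$-type terms by a coboundary, so its integrals are nonnegative and vanish only if the first-order variation is cohomologically trivial for $\mu$. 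Then $\int F\,d\mu\geqslant0$ with equality forcing $q$ to vanish on $\operatorname{supp}\mu$ in an appropriate sense. Since $q\not\equiv0$ has isolated zeros and every invariant measure charges the set where $q\neq0$ (a geodesic cannot stay in the zero set), strict positivity holds for each $\mu$. Compactness of the space of invariant probability measures in the weak-$*$ topology and continuity of $\mu\mapsto\int F\,d\mu$ then give a uniform $c_0>0$ with $\int_\gamma F\geqslant c_0\ell_X(\gamma)$. This is the delicate part: it requires an explicit formula from Labourie--Wentworth and the identification of the coboundary.

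\textbf{Step 3 (uniform Taylor remainder).} Write $L_t(\gamma)=\tfrac12\ell_X(\gamma)+\tfrac{t^2}{2}\partial_t^2L_0(\gamma)+R_t(\gamma)$. To control $R_t$ uniformly, I would use Bridgeman--Canary--Labourie--Sambarino-style reparametrization. For $|t|$ in a small complex neighborhood, the Anosov representations $\rho_t$ give Hölder potentials $f_t$ on $T^1X$, depending analytically on $t$, with $L_t(\gamma)=\int_\gamma f_t$; this uses analytic dependence of the limit maps in a complex neighborhood. By Cauchy estimates, $\|\partial_t^3 f_t\|_\infty\leqslant C$ for $|t|<\varepsilon_1$, so $|R_t(\gamma)|\leqslant C|t|^3\ell_X(\gamma)$. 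Choosing $\varepsilon_0<c_0/(4C)$ and $m_0=c_0/4$ gives
\[
L_t(\gamma)\geqslant\left(\tfrac12+m_0t^2\right)\ell_X(\gamma)
\]
for all $\gamma$ and all $|t|<\varepsilon_0$.
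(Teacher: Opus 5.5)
\textbf{Overall comparison.} Your architecture is the paper's. You use Labourie--Wentworth for the variations at $t=0$, then the analytic Bridgeman--Canary--Labourie--Sambarino potentials $f_t$ to make the Taylor remainder uniform at scale $\ell_X(\gamma)$. Your evenness argument via the gauge transformation $\mathrm{diag}(-1,1,-1)$ is a valid shortcut for $\dot a_1(0,\gamma)=0$, and Step 3 is fine.

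\textbf{The gap is Step 2.} Step 2 is the whole content of the theorem, and you never prove $\ddot a_1(0,\gamma)\geqslant c\,\ell_X(\gamma)$. Your expectation that no pointwise sign exists is exactly wrong. Differentiating the conformal Hitchin equation $\Delta_{g_0}u_t=4(\mathrm{e}^{2u_t}-1-t^2\mathrm{e}^{-u_t}|q|^2_{g_0})$ twice at $t=0$ gives $(\Delta_{g_0}-8)\ddot u_0=-8|q|^2_{g_0}$. The maximum principle then gives $\ddot u_0\geqslant m>0$ on all of $X$, zeros of $q$ included.

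Now work in the $H_0$-unitary frame diagonalizing $A_0=\mathrm{diag}(-1,0,1)$. The second-order term $C$ of the connection has $(1,1)$-entry $-\ddot u_0$, coming solely from the $\mathds{1}$-entry of $\Phi_t^{*_{H_t}}$, which carries the factor $\mathrm{e}^{2u_t}$. The projection term $-\operatorname{Re}\int\operatorname{tr}(B\dot P_1)\,\dd s$ equals $\int_0^{T_\gamma}(|x|^2+|y|^2)\,\dd s\geqslant 0$. Here $x,y$ are the off-diagonal entries of $\dot P_1$, solving $x'-x=y'+y=\sqrt2\,\iu Q_2$ along the geodesic, and the boundary terms cancel by periodicity. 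Hence $\ddot a_1(0,\gamma)\geqslant\int_0^{T_\gamma}\ddot u_0\,\dd s\geqslant m\,\ell_X(\gamma)/2$ directly. No Liv\v{s}ic coboundary and no compactness argument over invariant measures is needed.

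Your variance heuristic could not supply this positivity. Since $\int_{\ell_\gamma}\dot f\,\dd\tau=0$ for every $\gamma$, which your own evenness argument gives, $\dot f$ is cohomologous to zero and any BCLS-type variance of the first-order variation vanishes. The positivity must come from the genuinely second-order metric correction $\ddot u_0$. You also misplace the elliptic equation: the Green's operator acts on the diagonal metric variation $\ddot u_0$, whereas the off-diagonal couplings obey ODEs along the geodesic.

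\textbf{The $\lambda_3$ reduction is wrong as stated.} Via $\mathcal{B}$, the contragredient $\rho_t^*$ corresponds to $(\EE,-\Phi_t)$. Its cubic invariant is $\det(-\Phi_t)=-t^2q^2$, whereas $\det\Phi_{\pm t}=t^2q^2$, and these differ for $t\neq 0$. In fact $(\EE,-\Phi_t)$ is gauge equivalent to the Higgs bundle with $tq$ replaced by $\pm\iu tq$, a point on the slice through $\iu q$ rather than $q$. So $\rho_t^*$ is not conjugate to $\rho_{\pm t}$, and your identity $-\log|\lambda_3(\rho_t(\gamma))|=\log|\lambda_1(\rho_{\pm t}(\gamma))|$ is unjustified. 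The correct and simpler reduction is $-\log|\lambda_3(\rho_t(\gamma))|=\log|\lambda_1(\rho_t(\gamma^{-1}))|$ together with $\ell_X(\gamma^{-1})=\ell_X(\gamma)$, which is what the paper uses.
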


We note that as a corollary, this proves that the Hilbert length of $\rho_t$ dominates half of the hyperbolic length, i.e. for any $|t|\in (0,\varepsilon_0)$ there exists a constant $K_t=K(t,q)>1$ such that\[\dfrac{1}{2}\log \dfrac{|\lambda_1(\rho_t(\gamma))|}{|\lambda_3(\rho_t(\gamma))|}\geqslant K_t\cdot\dfrac{\ell_X(\gamma)}{2}.\] For Hitchin representations to $\mathrm{SL}_3 \mathbb{R}$, the length domination between the Hilbert length and hyperbolic length is proven by Tholozan~\cite{tholozandomination}.

We get a sequence of applications of \prettyref{thm:Hilbert domination}. 

\paragraph{Non-Hitchin Borel Anosov representations}

Firstly, we have the following corollary which constructs non-Hitchin Borel Anosov representations from $\pi_1(S)$ to $\mathrm{SL}_{3k}\mathbb{R}$ for any closed oriented hyperbolic surface $S$ and any $k\in\mathbb{N}^*$. In particular, this gives the first family of examples of non-Hitchin Borel Anosov representations from $\pi_1(S)$ to $\mathrm{SL}_{6k}\mathbb{R}$.

\begin{corollary}\label{coro: 6dimnonhitchin}
    Let $k>1$ and $\operatorname{Sym}^{k-1}(j_X)\colon\pi_1(S)\to\mathrm{SL}_{k}\mathbb{R}$ be the $(k-1)$-th symmetric power of the Fuchsian representation $j_X$, then there exists $\varepsilon_1=\varepsilon_1(q)>0$ such that for any $t\in \mathbb{R}, |t|\in(0,\varepsilon_1)$, \[\left(\rho_t\otimes\operatorname{Sym}^{k-1}(j_X)\right)\colon\pi_1(S)\to\mathrm{SL}_{3k}\mathbb{R}\] is irreducible, Borel Anosov but non-Hitchin.
\end{corollary}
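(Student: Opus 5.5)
The plan is to read everything off the eigenvalues of $\rho_t(\gamma)\otimes\operatorname{Sym}^{k-1}(j_X(\gamma))$ and then use a continuity argument from $t=0$. Write $\ell=\ell_X(\gamma)$ and $a_i(t)=\log|\lambda_i(\rho_t(\gamma))|$ for $i=1,2,3$, so that $a_1+a_2+a_3=0$. The logarithms of the moduli of the eigenvalues of $\operatorname{Sym}^{k-1}(j_X(\gamma))$ are $(k-1-2m)\ell/2$ for $m=0,\dots,k-1$. The $3k$ log-moduli of the tensor product are therefore
\[
a_i(t)+\tfrac{(k-1-2m)}{2}\ell .
\]
Two such values differ by $(a_i-a_j)+n\ell$ with $n\in\mathbb{Z}$. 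So Borel Anosovness amounts to two bounds, with constants uniform in $\gamma$ and up to an additive constant: $|a_i-a_j+n\ell|\geqslant c\,\ell-A$ for all $i\neq j$ and $n\in\mathbb{Z}$, and $|n|\ell\geqslant c\ell$ for $n\neq 0$, the latter being automatic.

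At $t=0$ we have $a_1=\ell/2$, $a_2=0$, $a_3=-\ell/2$. Hence $a_1-a_2$ and $a_2-a_3$ sit at half-integer multiples of $\ell$, which is harmless, but $a_1-a_3=\ell$ collides with $n=-1$. Indeed $(j_X\oplus\mathbf 1)\otimes\operatorname{Sym}^{k-1}j_X=\operatorname{Sym}^kj_X\oplus\operatorname{Sym}^{k-1}j_X\oplus\operatorname{Sym}^{k-2}j_X$ has repeated eigenvalue moduli.

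\textbf{Main estimate.} \prettyref{thm:Hilbert domination} gives the lower bound $a_1-a_3\geqslant(1+2m_0t^2)\ell$. The matching upper bounds are the main obstacle. I need, uniformly in $\gamma$, estimates of the form $|a_i(t)-a_i(0)|\leqslant\eta(t)\ell+A$ with $\eta(t)\to0$ as $t\to0$. I would get these from the stability of Anosov representations with uniform constants: the normalized length functions $a_i/\ell_X$ vary continuously on the Anosov locus, by the Bochi--Potrie--Sambarino dominated splitting picture, or equivalently by continuity of the Lyapunov-type quantities $\sup_\gamma a_i(\gamma)/\ell(\gamma)$. Choose $\varepsilon_1$ with $\eta(t)<\min(1/8,\,m_0t^2)$ for $0<|t|<\varepsilon_1$; since $\eta(t)=O(|t|)$ in an analytic family while the gap is order $t^2$, this should be arranged with $\eta$ relative to $1/8$. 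Then:
\begin{itemize}
\item $a_1-a_2$ and $a_2-a_3$ lie in $[(1/2-2\eta)\ell,(1/2+2\eta)\ell]$;
\item $a_1-a_3$ lies in $[(1+2m_0t^2)\ell,(1+2\eta)\ell]$.
\end{itemize}
If the $O(|t|)$ versus $t^2$ comparison causes trouble, I would use only the $1/8$-closeness for the upper bound and Theorem 1.2 for the lower bound. These estimates keep every difference $a_i-a_j+n\ell$ at distance at least $\min(2m_0t^2,1/4)\ell-A$ from $0$. This yields a uniform ordering of the $3k$ values with linear gaps, hence $k'$-Anosovness for all $1\leqslant k'\leqslant 3k-1$.

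\textbf{Non-Hitchin.} The path $t\mapsto\rho_t\otimes\operatorname{Sym}^{k-1}j_X$ is continuous in the character variety, and the Hitchin component is a connected component, hence closed. If $\rho_t\otimes\operatorname{Sym}^{k-1}j_X$ were Hitchin for small $t\neq0$, then so would be the limit at $t=0$. That limit is not Borel Anosov because of the collision above, yet Hitchin representations are Borel Anosov by Labourie. This is a contradiction. If needed, I would first shrink $\varepsilon_1$ so that the whole punctured interval lies in a single component.

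\textbf{Irreducibility.} For $t\neq0$ the cyclic Higgs bundle $(\EE,\Phi_{tq})$ has no $\Phi$-invariant subbundles of nonnegative degree, so it is stable and $\rho_t$ is irreducible. Its Zariski closure is then a semisimple irreducible subgroup of $\mathrm{SL}_3\mathbb{R}$, which is either $\mathrm{SL}_3\mathbb{R}$ or a conjugate of $\mathrm{SO}(2,1)$. The case $\mathrm{SO}(2,1)$ would force $\rho_t$ to lie in the Hitchin/Fuchsian locus; this contradicts $\rho_t$ lying in the Barbot component, or contradicts $a_2\equiv0$ together with $a_1=-a_3$ breaking the Theorem 1.2 inequality pattern after combining with the above estimates. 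Hence $\rho_t$ is Zariski dense in $\mathrm{SL}_3\mathbb{R}$. By Goursat's lemma, since $\mathrm{SL}_3$ and $\mathrm{PSL}_2$ share no common quotient, the pair $(\rho_t,j_X)$ is Zariski dense in $\mathrm{SL}_3\mathbb{R}\times\mathrm{SL}_2\mathbb{R}$. The representation $\mathbb{R}^3\otimes\operatorname{Sym}^{k-1}\mathbb{R}^2$ of this product is irreducible, so $\rho_t\otimes\operatorname{Sym}^{k-1}j_X$ is irreducible.
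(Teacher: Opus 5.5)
Your proposal is correct and follows the paper's proof in all three parts. The paper likewise separates each $a_i-a_j$ from the integer multiples of $\ell_X(\gamma)$: it invokes \prettyref{thm:Hilbert domination} only for the collision of $a_1-a_3$ with $\ell_X(\gamma)$, and for everything else it uses a crude $\tfrac14\ell_X(\gamma)$-closeness to the $t=0$ values, which it gets directly (with no additive constant) from the analytic BCLS potentials $r_{1,t}=2f_t-f_t\circ\iota$ and $r_{2,t}=2f_t\circ\iota-f_t$ rather than from dominated splittings; it then argues non-Hitchin by deforming to $t=0$, and irreducibility via Zariski density and a Goursat-type argument (\prettyref{prop:irr}).

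Two slips are worth fixing. Your first requirement $\eta(t)<m_0t^2$ is impossible, not merely inconvenient: \prettyref{thm:Hilbert domination} itself gives $a_1(t,\gamma)-a_1(0,\gamma)\geqslant m_0t^2\ell_X(\gamma)$, which forces $\eta(t)\geqslant m_0t^2$, so your fallback of using only $\eta(t)<1/8$ for the upper bound is the necessary choice. In the $\mathrm{SO}(2,1)$ step, only your first reason is an actual argument, namely that $\rho_t$ lies in the component of the reducible, hence non-Hitchin, $\rho_0$; the second, via $a_2\equiv 0$ and the lower bounds of the domination theorem, yields no contradiction as stated.
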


\begin{remark}\label{rem:non-borel-anosov}
Note that whenever $k>1$, the representation $\rho_0\otimes\operatorname{Sym}^{k-1}(j_X)$ is neither irreducible nor Borel Anosov. Indeed, one can directly check that for any nontrivial $\gamma\in \pi_1(S)$, $\lambda^{k-2}(j_X(\gamma))$ is always an eigenvalue of multiplicity at least $2$. As a result, the representation cannot be Borel Anosov. This gives an example of robust quasi-isometric embedding which is a limit of Borel Anosov representations, but non-Borel Anosov itself. One could compare this with \cite[Question 5]{potrie2018robust}, \cite[Section 8]{kassel2018geometric} and \cite[Corollary 5.5]{tsouvalas2026robust}.
\end{remark}

\paragraph{Lyapunov exponents}

Next, let $\pi:\mathbb H^2\to S$ be the universal covering associated with the Fuchsian representation $j_X:\pi_1(S)\to\mathrm{SL}_2\mathbb R$, and let $\pi^T:T^1\mathbb H^2\to T^1S$ be the induced covering. For a representation $\rho:\pi_1(S)\to\mathrm{SL}_3\mathbb R$, define the flat bundle
\[
\mathsf F_{X,\rho}:=\pi_1(S)\backslash (T^1\mathbb H^2\times\mathbb R^3),
\quad \text{where group action is given by }
\gamma\cdot(v,u):=(j_X(\gamma)v,\rho(\gamma)u).
\]
The geodesic flow $\varphi^{\tau}$ induces the linear flow
\[
\varphi_{X,\rho}^\tau[v,u]=[\varphi^\tau v,u]
\]
on $\mathsf F_{X,\rho}$. With respect to the normalized Liouville measure $m_X$ which is ergodic, denote its Lyapunov exponents by
\[
L_1({X,\rho})\geqslant L_2({X,\rho})\geqslant L_3({X,\rho}).
\] 
Using \prettyref{thm:Hilbert domination} we show
\begin{corollary}\label{coro:lyapunov exponent}
There exist $\varepsilon_2 = \varepsilon_2(q)>0$ and $m_2 = m_2(q)>0$ such that for any $t\in (-\varepsilon_2,\varepsilon_2)$,
\[
L_1(X,\rho_t)\geqslant \frac{1}{2}+m_2 t^2,\quad L_3(X,\rho_t)\leqslant -\frac{1}{2}-m_2 t^2.
\]
\end{corollary}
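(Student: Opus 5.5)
We deduce \prettyref{coro:lyapunov exponent} from \prettyref{thm:Hilbert domination} by comparing Lyapunov exponents with the growth of eigenvalues along closed geodesics. The plan is to write $L_1(X,\rho)$ as a limit of averages over periodic orbits, weighted so that the averages converge to the Liouville measure $m_X$. Take $\varepsilon_2=\varepsilon_0$ and $m_2=m_0$. Once the top exponent is handled, the bottom one follows by applying the same argument to the dual representation $\rho_t^{*}=\rho_t^{-T}$. Its top eigenvalue modulus is $|\lambda_3(\rho_t(\gamma))|^{-1}$, and its top exponent is $-L_3(X,\rho_t)$.

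First, I use subadditivity. Fix any continuous norm on $\mathsf F_{X,\rho}$ and set $f_\tau(v)=\log\|\varphi^\tau_{X,\rho}|_{v}\|$, the log operator norm of the flow map on the fibre. This is a subadditive cocycle over the geodesic flow. By Kingman's theorem and ergodicity of $m_X$,
\[
L_1(X,\rho)=\lim_{\tau\to\infty}\frac1\tau\int f_\tau\,dm_X=\inf_{\tau>0}\frac1\tau\int f_\tau\,dm_X .
\]
Thus $L_1$ is an infimum of continuous functionals of the measure. The naive direction, approximating $m_X$ by periodic measures, only gives an upper bound on $L_1$ for free, so a different route is needed for the lower bound.

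Instead I would use the standard Anosov-closing estimate. Let $v$ be an $m_X$-generic point and $\tau$ large. By the closing lemma, the orbit segment $\varphi^{[0,\tau]}v$ is $\delta$-shadowed by a periodic orbit of some $\gamma$ with $\ell_X(\gamma)=\tau+O(\delta)$. Continuity of the flat bundle then gives a constant $C$, independent of $\tau$, with
\[
f_\tau(v)\geqslant \log\|\rho(\gamma)\|-C\geqslant \log|\lambda_1(\rho(\gamma))|-C .
\]
The constant $C$ absorbs the comparison between the fibre map over $v$ and the holonomy around the closed orbit. This works because the two differ by bounded matrices at the two endpoints, together with the holonomy over the short closing path. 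Applying \prettyref{thm:Hilbert domination}, the right-hand side is at least
\[
\left(\tfrac12+m_0t^2\right)(\tau-O(\delta))-C .
\]
Dividing by $\tau$ and letting $\tau\to\infty$ gives $L_1(X,\rho_t)\geqslant \tfrac12+m_0t^2$ for $|t|<\varepsilon_0$, since $\ell_X$ is measured in the normalization where the geodesic flow on $T^1S$ has unit speed.

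The main obstacle is making $C$ uniform: the closing must be available at every large time $\tau$, for generic $v$. For this I would use the closing lemma in the form where $v$ and $\varphi^\tau v$ lie in the same small box, which holds for a positive density of times by recurrence. Kingman's limit can then be taken along such times. I also need to check that the $\mathrm{SL}_2$-normalization, $\ell_X(\gamma)=2\log|\lambda(j_X(\gamma))|$, matches the flow time on $T^1\mathbb H^2$ for curvature $-1$. As a sanity check, at $t=0$ the exponent of $j_X$ is $\tfrac12$, consistent with the constant term in the claimed bound.
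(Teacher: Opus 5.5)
Your argument is correct, but it takes a genuinely different route from the paper's.

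\textbf{The paper's route.} The paper never works with the cocycle $\varphi^\tau_{X,\rho_t}$ directly. It combines three ingredients:
\begin{itemize}
\item the Bridgeman--Canary--Labourie--Sambarino potential $f_t$;
\item the identification $L_1(X,\rho_t)=\int_{T^1S}f_t\,\mathrm{d}m_X$ from Proposition~\ref{prop: lyapunovint}, quoted from Costantini--Martin-Baillon;
\item Bowen's equidistribution of periodic orbits.
\end{itemize}
This writes $L_1$ as a limit of averages of $\frac{1}{\ell_X(\gamma)}\int_{\ell_\gamma}f_t\,\mathrm{d}\tau$. Each average is bounded below by the periodic-orbit estimate from the proof of Theorem~\ref{thm:Hilbert domination}, and $L_3$ then comes from flip-invariance of $m_X$. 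The potential $f_t$ is exactly what makes $L_1$ a continuous linear functional of the measure. That removes the upper-semicontinuity obstruction you correctly flagged for naive periodic-orbit approximation.

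\textbf{Your route.} You bypass this machinery at this step. Kingman/Oseledets, Poincar\'e recurrence, the closing lemma, uniform comparability of the norm at $\delta$-close points, and ``spectral radius $\leqslant$ operator norm'' suffice. This is more elementary and more general. The step uses neither the Anosov property nor $f_t$, and it gives the lower bound for the top exponent of every ergodic flow-invariant probability measure, not only $m_X$. What it does not give is the formula $L_1(X,\rho_t)=\int f_t\,\mathrm{d}m_X$ or its analytic dependence on $t$. The paper reuses both in Corollary~\ref{coro: affinity exponent}, to identify $\partial_t^2P^{\mathrm{aff}}(3/2,0)$ with $-2\frac{\mathrm{d}^2}{\mathrm{d}t^2}L_1(X,\rho_t)|_{t=0}$.

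\textbf{Two minor points.}
\begin{itemize}
\item You only need an unbounded sequence of return times $\tau_n$ with $\varphi^{\tau_n}v$ $\delta$-close to $v$. Poincar\'e recurrence gives this for a.e.\ $v$, and you do not need positive density, since Kingman already provides a genuine limit. Your ``$\tau$ large'' should be restricted to such times from the start.
\item With the paper's convention $\gamma\cdot(v,u)=(j_X(\gamma)v,\rho(\gamma)u)$, the return map along $\ell_\gamma$ is $\rho(\gamma)^{-1}$, not $\rho(\gamma)$. This is immaterial: the bound in Theorem~\ref{thm:Hilbert domination} holds for all $\gamma$, and $\ell_X(\gamma^{-1})=\ell_X(\gamma)$.
\end{itemize}
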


\paragraph{Hausdorff dimension of the limit set}

Recall that if $\rho: \pi_1(S)\to \mathrm{SL}_3\mathbb{R}$ is an Anosov representation, then we can define the projective limit set $\Lambda^1_\rho\subset \mathbb{R}\mathbb{P}^2$ by taking the closure of the attractive eigenlines of $\rho(\gamma),\gamma\in \pi_1(S)\setminus \{\mathrm{id}\}$. Combined with a result of Li--Pan--Xu~\cite[Theorem~1.2]{lipanxu} (see also Pozzetti--Sambarino--Wienhard~\cite[Theorem~B]{liplim} ), \prettyref{thm:Hilbert domination} can be applied to obtain the following:

\begin{corollary}\label{coro: affinity exponent}
There exist $\varepsilon_3 = \varepsilon_3(q)>0$, $m_3 = m_3(q)>0$ and $m'_3 = m'_3(q)>0$ such that for any $t\in \mathbb{R}$ with $|t|\in (0,\varepsilon_3)$,
\[
\frac{3}{2} - m'_3t^2\leqslant \dim_{\mathrm{H}} \Lambda^1_{\rho_t} \leqslant \frac{3}{2} - m_3t^2,
\]
where $\dim_{\mathrm{H}}$ denotes the Hausdorff dimension.
\end{corollary}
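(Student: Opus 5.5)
We prove \prettyref{coro: affinity exponent} by feeding \prettyref{thm:Hilbert domination} into the formula of Li--Pan--Xu~\cite[Theorem~1.2]{lipanxu}, see also \cite[Theorem~B]{liplim}. For a Borel Anosov representation $\rho\colon\pi_1(S)\to\mathrm{SL}_3\mathbb{R}$ whose limit curve is $C^1$, that formula identifies $\dim_{\mathrm H}\Lambda^1_\rho$ with the affinity exponent. This is the unique $s\in[1,2]$ at which the critical exponent of the series
\[
\sum_{\gamma}\left(\frac{|\lambda_2(\rho(\gamma))|}{|\lambda_1(\rho(\gamma))|}\right)\left(\frac{|\lambda_3(\rho(\gamma))|}{|\lambda_1(\rho(\gamma))|}\right)^{s-1}
\]
equals $0$. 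Equivalently, it is the zero of the pressure $s\mapsto P\big(-(\alpha_{12}+(s-1)\alpha_{13})\big)$, computed over the reparametrized geodesic flow, where $\alpha_{ij}=\log|\lambda_i/\lambda_j|$.

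By \prettyref{thm:BD}, each $\rho_t$ is Borel Anosov. We will also need $\rho_t$ to be $(1,1,2)$-hyperconvex (a hypothesis of Pozzetti--Sambarino--Wienhard) or to satisfy the regularity hypothesis of Li--Pan--Xu. This should hold because it is an open condition and holds at $t=0$, but we must check it. At $t=0$ we have $\lambda_1=e^{\ell/2}$, $\lambda_2=1$, $\lambda_3=e^{-\ell/2}$, so the series becomes $\sum e^{-\ell(\gamma)/2-(s-1)\ell(\gamma)}$. Its critical exponent vanishes exactly when $s-\tfrac12=1$, using the fact that the critical exponent of $\ell_X$ is $1$. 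Hence $s=3/2$.

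\textbf{Upper bound.} Write $\alpha_{12}+(s-1)\alpha_{13}=(2-s)\log|\lambda_1|+(s-1)(-\log|\lambda_3|)+\ldots$. More precisely, using $\lambda_1\lambda_2\lambda_3=1$, we get $\alpha_{12}=2\log|\lambda_1|+\log|\lambda_3|$ and $\alpha_{13}=\log|\lambda_1|-\log|\lambda_3|$. Then
\[
\alpha_{12}+(s-1)\alpha_{13}=(s+1)\log|\lambda_1|+(2-s)\log|\lambda_3|.
\]
The coefficient $(2-s)$ of $\log|\lambda_3|$ is positive, so \prettyref{thm:Hilbert domination} cannot be applied to both terms with the same sign. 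This is the main obstacle. To get around it, we instead use
\[
\alpha_{12}+(s-1)\alpha_{13}=s\,\alpha_{13}-\alpha_{23},\qquad \alpha_{23}=\log|\lambda_2|-\log|\lambda_3|.
\]
We then need an upper bound on $\alpha_{23}$, or we exploit the symmetry $\rho_t\leftrightarrow\rho_{-t}$ together with duality. The contragredient representation corresponds to the dual Higgs bundle, which is gauge equivalent to $\Phi_{-tq}$ or a rotation of it. So the roles of $\lambda_1$ and $\lambda_3^{-1}$ are exchanged, and averaging over the $\gamma\mapsto\gamma^{-1}$ symmetry shows that the spectra of $\log|\lambda_1|$ and $-\log|\lambda_3|$ are controlled jointly. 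The cleaner route is to use the lower bound $s\,\alpha_{13}\ge s(1+2m_0t^2)\ell_X$, which follows from both inequalities of \prettyref{thm:Hilbert domination}, and to bound $\alpha_{23}$ from above by $(\tfrac12+Ct^2)\ell_X$. This last bound should follow from real-analytic dependence of the length functions on $t$, since the first variation vanishes by the $t\mapsto -t$ symmetry (gauge by $\mathrm{diag}(1,-1,1)$). Combining these, the critical-exponent equation forces $s(1+2m_0t^2)-(\tfrac12+Ct^2)\ge 1$ at the root once $s$ is too large. Realizing this requires a finer comparison: we would use the derivative in $s$ of the pressure (the thermodynamic formalism), whose value at $t=0$ is $-\int\alpha_{13}\,d\mu/\ldots\neq0$, to convert the $t^2$ lower bound on the $s$-coefficient into $s(t)\le 3/2-m_3t^2$.

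\textbf{Lower bound.} Real-analyticity of $t\mapsto\rho_t$ and analyticity of pressure together make $\dim_{\mathrm H}\Lambda^1_{\rho_t}$ real-analytic and even in $t$ near $0$. Hence $\dim_{\mathrm H}\Lambda^1_{\rho_t}=3/2-ct^2+O(t^4)$, which gives the lower bound $3/2-m_3't^2$ for small $|t|$. The upper bound shows $c\ge m_3>0$. We expect the main difficulty to be the sign issue in the upper bound, namely controlling $\alpha_{23}$ to second order, which requires the second variation formula underlying \prettyref{thm:Hilbert domination} rather than the statement alone.
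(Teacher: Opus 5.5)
\textbf{The upper bound is not established, and the pointwise route you sketch cannot establish it.} Combining \prettyref{thm:Hilbert domination} with $\alpha_{23}\leqslant(\frac12+Ct^2)\ell_X$ only yields $s_A(\rho_t)\leqslant\frac{3/2+Ct^2}{1+2m_0t^2}=\frac32+(C-3m_0)t^2+O(t^4)$, and nothing relates $C$ to $m_0$. The same problem appears for any pointwise comparison with $\ell_X$. At second order, such a comparison for $\alpha_{12}+\frac12\alpha_{13}=\frac52a_1+\frac12a_3$ requires $\frac52\ddot a_1(0,\gamma)-\frac12\ddot a_1(0,\gamma^{-1})\geqslant c\,\ell_X(\gamma)$ uniformly in $\gamma$ (here $a_3(t,\gamma)=-a_1(t,\gamma^{-1})$). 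That means bounding $\ddot a_1(0,\gamma^{-1})$ by $5\,\ddot a_1(0,\gamma)$, whereas \prettyref{thm:variation-flat} gives only a lower bound and no such comparison. Your remark that ``the upper bound shows $c\geqslant m_3>0$'' is therefore circular: the whole content is the strict negativity of $\frac{\dd^2}{\dd t^2}s_A(\rho_t)|_{t=0}$.

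The missing idea is to average instead of comparing pointwise. Set $\mathfrak f_{s,t}=(1+s)f_t+(s-2)f_t\circ\iota$ and $P^{\mathrm{aff}}(s,t)=P(-\mathfrak f_{s,t})$. The potential $-\mathfrak f_{3/2,0}$ is cohomologous to $-1$, so the equilibrium state at $(\frac32,0)$ is the Liouville measure $m_X$, which is flip-invariant. Since $\dot f$ has vanishing periods, $\frac52\dot f-\frac12\dot f\circ\iota$ is Liv\v{s}ic cohomologous to $0$. Hence $\partial_tP^{\mathrm{aff}}(\frac32,0)=0$ and the variance term in the second derivative vanishes, leaving
\[
\partial_t^2P^{\mathrm{aff}}\left(\tfrac32,0\right)=-\int_{T^1S}\left(\tfrac52\ddot f-\tfrac12\ddot f\circ\iota\right)\dd m_X=-2\int_{T^1S}\ddot f\,\dd m_X\leqslant-m .
\]
Flip-invariance of $m_X$ is exactly what removes your sign obstruction. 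The inequality comes from $\int_{\ell_\gamma}\ddot f\geqslant\frac m2\ell_X(\gamma)$ together with Bowen equidistribution, as in \prettyref{coro:lyapunov exponent}. So only these period identities are needed, not the finer second-variation information you anticipated. Since $\partial_sP^{\mathrm{aff}}(\frac32,0)=-1$, the implicit function theorem makes $s_A(\rho_t)$ analytic, with vanishing first derivative and second derivative at most $-m$ at $t=0$. This gives both inequalities at once.

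\textbf{The identification $\dim_{\mathrm H}\Lambda^1_{\rho_t}=s_A(\rho_t)$ is also mis-justified.} The limit set of $\rho_0=j_X\oplus\mathbf 1$ is the projective line $\mathbb P(\mathbb R^2\oplus 0)$, so $\dim_{\mathrm H}\Lambda^1_{\rho_0}=1\neq\frac32=s_A(\rho_0)$. Consequently:
\begin{itemize}
\item $\rho_0$ is not $(1,1,2)$-hyperconvex, since any three limit points are collinear, so there is no openness argument from $t=0$;
\item $t\mapsto\dim_{\mathrm H}\Lambda^1_{\rho_t}$ is not analytic, not even continuous, at $t=0$, so your lower-bound paragraph must be run for $s_A(\rho_t)$ rather than for the dimension.
\end{itemize}
Moreover, a $C^1$ limit curve has Hausdorff dimension $1$, so the regularity hypothesis you propose contradicts the conclusion $\dim_{\mathrm H}\Lambda^1_{\rho_t}\approx\frac32$. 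The correct input is Li--Pan--Xu's equality for irreducible $\rho$. Irreducibility holds for $t\neq0$ by \prettyref{coro:irr}, coming from stability of $(\EE,\Phi_{tq})$ as an $\mathrm{SL}_3\mathbb C$-Higgs bundle, and this is why the statement excludes $t=0$. For the upper bound on the dimension alone, the Pozzetti--Sambarino--Wienhard inequality $\dim_{\mathrm H}\leqslant s_A$ suffices.
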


On the other hand, inspired by \cite{portideform}, we compute an example of arbitrarily small irreducible deformations $\rho:\pi_1(S)\to \mathrm{SL}_{3}\mathbb{R}$ of $\rho_0 = j_X\oplus \mathbf{1}$ in \prettyref{rem: dimension>3/2} satisfying that
\[
\dim_{\mathrm{H}} (\Lambda^1_\rho)>\frac{3}{2}.
\]
Since the irreducible locus in a sufficiently small neighbourhood of $[\rho_0]$ in $\mathfrak{X}(S,\mathrm{SL}_3\mathbb{R})$ is path-connected (see \cite[Theorem~1.1]{portideform}) and the Hausdorff dimension varies continuously there (see \cite[Theorem~1.2]{lipanxu}), we have
\begin{theorem}
For every open neighbourhood $U_{\rho_0}\subset \operatorname{Hom}_{\mathrm{Anosov}}(\pi_1(S), \mathrm{SL}_3 \mathbb{R})$ of $\rho_0$ in the space of Anosov representations, the set $\left\{ \dim_{\mathrm{H}} \Lambda^1_{\rho} \ \middle| \ \rho\in U_{\rho_0}  \right\}$ contains an open neighbourhood of $3/2$. 
\end{theorem}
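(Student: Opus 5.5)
The plan is to combine three ingredients: \prettyref{coro: affinity exponent} to get Anosov representations with limit-set dimension strictly below $3/2$, the irreducible example of \prettyref{rem: dimension>3/2} to get dimension strictly above $3/2$, and continuity of $\rho\mapsto \dim_{\mathrm H}\Lambda^1_\rho$ along a connected family to fill in everything between. We may shrink $U_{\rho_0}$ freely, since a neighbourhood of $3/2$ inside the value set of a smaller neighbourhood also works for the larger one. So fix $U_{\rho_0}$ and choose a smaller neighbourhood $V\subset U_{\rho_0}$ of $\rho_0$ such that the following hold:
\begin{itemize}
\item every $\rho\in V$ is Anosov (Anosovness is an open condition);
\item the image of $V$ in $\mathfrak{X}(S,\mathrm{SL}_3\mathbb{R})$ lies in the small neighbourhood $W$ of $[\rho_0]$ on which \cite[Theorem~1.1]{portideform} gives path-connectedness of the irreducible locus.
\end{itemize}
One may need to work with the preimage of the irreducible locus of $W$ in $V$, and to lift paths from the character variety to $\operatorname{Hom}$. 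This is possible because $\dim_{\mathrm H}\Lambda^1_\rho$ is conjugation invariant, so it descends to a function on characters and the lifting issue disappears.

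\textbf{Step 1: a point below $3/2$.} By \prettyref{thm:BD}, for $|t|$ small the representation $\rho_t$ is Anosov. It depends continuously on $t$, with $\rho_0$ the Barbot representation, so $\rho_t\in V$ after conjugating suitably. \prettyref{coro: affinity exponent} then gives $\dim_{\mathrm H}\Lambda^1_{\rho_t}\le 3/2-m_3t^2<3/2$.

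Next I need $\rho_t$ to be irreducible for $t\neq 0$. I would argue this from the stability of the Higgs bundle $(\EE,\Phi_{tq})$ when $q\neq0$. The only $\Phi$-invariant subbundles would have to be built from the chain $\KK_X^{1/2}\to\KK_X^{-1/2}\otimes\KK_X$ together with $q$. Invariance forces the subbundle to contain the whole cycle, and I expect checking degrees to give stability. If $\rho_t$ is stable with real image, then it is irreducible over $\mathbb{R}$, or at worst reduces to a totally reducible case that can be excluded by Anosov-plus-degree considerations.

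\textbf{Step 2: a point above $3/2$.} The construction of \prettyref{rem: dimension>3/2} produces arbitrarily small irreducible deformations $\rho'$ with $\dim_{\mathrm H}\Lambda^1_{\rho'}>3/2$. Choose one of them in $V$.

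\textbf{Step 3: connect and apply the intermediate value theorem.} By \cite[Theorem~1.1]{portideform}, join $[\rho_t]$ to $[\rho']$ by a path of irreducible characters in $W$; it can be taken to stay in the image of $V$ since $V$ was chosen small. By \cite[Theorem~1.2]{lipanxu}, the Hausdorff dimension is continuous along this path. The intermediate value theorem then shows that the value set contains the whole interval $[\dim_{\mathrm H}\Lambda^1_{\rho_t},\dim_{\mathrm H}\Lambda^1_{\rho'}]$, whose interior contains $3/2$.

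The main obstacle is the bookkeeping between neighbourhoods. The path-connected irreducible locus of \cite{portideform} lives in a fixed small neighbourhood of $[\rho_0]$ in the character variety, which is not Hausdorff near the reducible point $\rho_0$. An arbitrary $U_{\rho_0}$ in $\operatorname{Hom}$ may be much smaller than that neighbourhood. I would address this by invoking the local conic structure: path-connectedness holds for a basis of neighbourhoods, namely the irreducible part of every sufficiently small ball. Together with the fact that both endpoints can be chosen arbitrarily close to $\rho_0$, this keeps the whole path inside $V$.
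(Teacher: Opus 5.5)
Your proposal is correct and takes the same route as the paper. You get a value below $3/2$ from \prettyref{coro: affinity exponent}, a value above $3/2$ from \prettyref{rem: dimension>3/2}, and then apply the intermediate value theorem along a path in Porti's path-connected irreducible locus, with Li--Pan--Xu supplying continuity. Irreducibility of $\rho_t$ for $t\neq 0$ is already \prettyref{coro:irr}, so your sketched degree check is unnecessary. Your care in fitting the path inside the image of $U_{\rho_0}$ fills in bookkeeping the paper leaves implicit, but the non-Hausdorff worry is unfounded: $\mathfrak{X}(S,\mathrm{SL}_3\mathbb{R})$ is the Hausdorff geometric quotient and $\rho_0$ is semisimple.
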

Li--Pan--Xu~\cite[Theorem~1.3~(2)]{lipanxu} showed that the Hausdorff dimension takes values near $3/2$ in the irreducible locus near $\rho_0$. The above theorem strengthens this observation by showing that $3/2$ is an interior point of the local range.

\paragraph{Hilbert Entropy}
Finally, for any Anosov representation $\rho:\pi_1(S)\to \mathrm{SL}_3\mathbb{R}$, we define the \emph{Hilbert entropy} of $\rho$ by
\[
\delta^{\mathrm{H}}_{\rho} := \limsup_{T\to \infty} \frac{1}{T} \log \# \left\{ [\gamma]\ \middle|\  \frac{1}{2}\log \frac{|\lambda_1(\rho(\gamma))|}{|\lambda_3(\rho(\gamma))|} \leqslant T \right\},
\]
where $[\gamma]$ denotes the conjugacy classes in $\pi_1(S)$. When $\rho$ is Hitchin, it preserves a properly convex domain $\Omega$ in $\mathbb{R}\mathbb{P}^2$, and the Hilbert entropy coincides with the topological entropy of the geodesic flow on the unit tangent bundle of $\rho(\pi_1(S))\backslash \Omega$ with respect to the Hilbert metric on $\Omega$ (see Choi--Goldman~\cite{choigoldman} and Crampon~\cite{cramponentropy}).

Since $\rho_0 = j_X\oplus \mathbf{1}$, $\delta^{\mathrm{H}}_{\rho_0}$ coincides with twice the topological entropy of the geodesic flow on $T^1S$, which is exactly $2$. As a result, \prettyref{thm:Hilbert domination} directly implies the following:

\begin{corollary}\label{coro: entropydrop}
There exist constants $\varepsilon_4 = \varepsilon_4(q)>0$ and $m_4 = m_4(q)>0$ such that for any $t\in (-\varepsilon_4,\varepsilon_4)$,
\[
\delta^{\mathrm{H}}_{\rho_t} \leqslant 2-m_4 t^2.
\]
\end{corollary}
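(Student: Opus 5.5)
The plan is to reduce the statement directly to \prettyref{thm:Hilbert domination}, comparing counting functions of conjugacy classes. Set $\varepsilon_4:=\varepsilon_0(q)$. For $t\in(-\varepsilon_4,\varepsilon_4)$, add the two inequalities of \prettyref{thm:Hilbert domination} and halve. This gives, for every $\gamma\in\pi_1(S)$,
\[
\frac{1}{2}\log\frac{|\lambda_1(\rho_t(\gamma))|}{|\lambda_3(\rho_t(\gamma))|}\geqslant \left(\frac{1}{2}+m_0t^2\right)\ell_X(\gamma).
\]
Both sides are conjugation invariant, so this is an inequality between functions on conjugacy classes. It yields the inclusion
\[
\left\{[\gamma]\ \middle|\ \tfrac{1}{2}\log\tfrac{|\lambda_1(\rho_t(\gamma))|}{|\lambda_3(\rho_t(\gamma))|}\leqslant T\right\}\subset\left\{[\gamma]\ \middle|\ \ell_X(\gamma)\leqslant \tfrac{T}{1/2+m_0t^2}\right\}.
\]

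Next I use the prime geodesic theorem, or simply Margulis' counting. It gives $\#\{[\gamma]\mid \ell_X(\gamma)\leqslant L\}\sim e^{L}/L$, counting all nontrivial classes including non-primitive ones; the non-primitive classes contribute only $O(e^{L/2})$. Hence $\limsup_{L\to\infty}\frac1L\log\#\{[\gamma]\mid\ell_X(\gamma)\leqslant L\}=1$. The trivial class, if counted, adds only $1$ and does not affect the exponential rate. Combining this with the inclusion,
\[
\delta^{\mathrm H}_{\rho_t}\leqslant \frac{1}{1/2+m_0t^2}=\frac{2}{1+2m_0t^2}.
\]

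It remains to turn this into the form $2-m_4t^2$. We have $\frac{2}{1+2m_0t^2}=2-\frac{4m_0t^2}{1+2m_0t^2}$. Shrinking $\varepsilon_4$ if needed so that $2m_0\varepsilon_4^2\leqslant 1$, we get $\frac{4m_0t^2}{1+2m_0t^2}\geqslant 2m_0t^2$, so $m_4:=2m_0$ works. As a consistency check, at $t=0$ the bound gives $\delta^{\mathrm H}_{\rho_0}\leqslant 2$, matching the value computed in the text.

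There is no real obstacle here, since all the analytic content sits in \prettyref{thm:Hilbert domination}. The only points needing care are the following.
\begin{itemize}
\item The inequality in \prettyref{thm:Hilbert domination} must hold uniformly over all $\gamma$, with no additive constant, which is how it is stated.
\item Sublinear prefactors and the treatment of non-primitive classes do not affect the exponential growth rate.
\end{itemize}
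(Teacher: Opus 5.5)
Your proof is correct and follows the paper's reasoning. The paper gives no separate proof; it only says Theorem~\ref{thm:Hilbert domination} ``directly implies'' the corollary once $\delta^{\mathrm H}_{\rho_0}=2$ is noted. Your argument supplies exactly the omitted details: the Hilbert-length domination by $\left(\frac12+m_0t^2\right)\ell_X(\gamma)$, the comparison of counting functions using the growth rate $1$ of the hyperbolic length spectrum, and the elementary estimate $\frac{2}{1+2m_0t^2}\leqslant 2-2m_0t^2$ for $2m_0t^2\leqslant 1$.
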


\subsection*{Declaration of AI use}
 The observation in \prettyref{thm:Hilbert domination} and the idea of applying it to construct non-Hitchin Borel Anosov representations from a surface group to $\mathrm{SL}_{2k}\mathbb{R}$ arose first from a conversation between the first author and ChatGPT 5.6 Sol. Following this idea, the proof of \prettyref{thm:Hilbert domination} using Labourie--Wentworth's formula and thermodynamic formalism, together with other applications in this article, are given by the authors. ChatGPT 5.6 Sol also provided the idea of the construction in \prettyref{rem: dimension>3/2}. 

After a retrospective literature review aimed to identify mathematical precedents for the observations suggested by the large language model, the authors found that second-variation mechanisms similar to the one underlying \prettyref{thm:Hilbert domination} appear in the work of Wolf~\cite{wolfwpmetric} and Dai~\cite{daixianvariation}, and the related construction in \prettyref{rem: dimension>3/2} appears in Porti~\cite{portideform}.
 
The text of the manuscript was composed by the authors. All mathematical statements and cited references in the manuscript were independently checked by the authors. The authors take full responsibility for the content of the manuscript.

\subsection*{Acknowledgement}
Z. Yao is supported by the National University of Singapore and the Ministry of Education grant A-8004148-00-00. J. Zhang is supported by NSF of China grant No. 125B2007.
Both authors are also partially supported by the National Key R\&D
Program of China No. 2022YFA1006600, the Fundamental Research Funds for the
Central Universities, and Nankai Zhide foundation. 

Both authors are grateful to Qiongling Li and Tengren Zhang for comments on an earlier version of this paper and their continuous support. Z. Yao also sincerely thanks Qiongling Li for hosting his long-term visit to the Chern Institute of Mathematics from July to December 2026. 

\section{Bronstein--Davalo slice in the Barbot component}

\subsection{Harmonic metric and non-Abelian Hodge correspondence}
Given a closed oriented surface $S$ of genus at least $2$ and a real reductive Lie group $G$, its \emph{character variety} $\mathfrak{X}(S, G)$ to $G$ is the geometric quotient of the space of representations:
 $$\operatorname{Hom}(\pi_1(S),G)\sslash G.$$
 In this expression, $G$ is acting by conjugation, and taking the geometric quotient means that we only consider semi-simple representations, or representations whose orbit by the action by conjugation is closed. In particular taking only this subset of the quotient makes it Hausdorff for the induced topology. 
 
The character variety can be parametrized by a moduli space of objects of analytic nature. This parametrization depends on the choice of a Riemann surface structure $X$ on $S$. The objects in this moduli space are \emph{polystable Higgs bundles}. Here we only consider the case for $G=\mathrm{SL}_n\mathbb{R}$ or $\mathrm{SL}_n\mathbb{C}$.

\begin{definition}
    An \emph{$\mathrm{SL}_n\mathbb{R}$-Higgs bundle} over $X$ is a triple $(\mathcal{E},\Phi, \mathcal{B})$, where
    \begin{itemize}
        \item $\mathcal{E}$ is a holomorphic rank $n$ vector bundle over $X$ with $\det(\mathcal{E})\cong \mathcal{O}_X$,
        \item $\Phi$, which is called \emph{Higgs field}, is a holomorphic section of $ \operatorname{End}(\mathcal{E})\otimes\KK_X$ where $\mathcal{K}_X$ is the canonical line bundle of $X$ with $\operatorname{tr}(\Phi)=0$,
        \item $\mathcal{B}$, which is an \emph{orthogonal structure}, is a holomorphic non-degenerate symmetric pairing of $\mathcal{E}$ so that $\Phi$ is $\mathcal{B}$-symmetric.
\end{itemize}
Without the data of the orthogonal structure $\mathcal{B}$, we have an $\mathrm{SL}_n\mathbb{C}$-Higgs bundle.
\end{definition} 

We call an $\mathrm{SL}_n\mathbb{C}$-Higgs bundle $(\EE,\Phi)$ is \emph{stable} if every $\Phi$-invariant subbundle $\mathcal{F}\neq0,\EE$ of $\mathcal{E}$ satisfies that $\deg(\FF)<0$. There are differences for the real group case. The following definition of stability is given in \cite[Theorem 4.17]{garcia2009hitchin}.

\begin{definition}
An $\mathrm{SL}_n\mathbb{R}$-Higgs bundle $(\mathcal{E}, \Phi,\mathcal{B})$ is 
\begin{itemize}
    \item \emph{stable} if every $\mathcal{B}$-isotropic $\Phi$-invariant subbundle $\mathcal{F}\neq0$ of $\mathcal{E}$ satisfies that $\deg(\FF)<0$;
    \item \emph{polystable} if it is semistable and, for any $\mathcal{B}$-isotropic (resp. $\mathcal{B}$-coisotropic) and $\Phi$-invariant  subbundle $\mathcal{F}_1\neq0,\EE$ such that $\deg(\FF_1) = 0$, there is another $\mathcal{B}$-coisotropic (resp. $\mathcal{B}$-isotropic) and $\Phi$-invariant subbundle $\mathcal{F}_2\neq0,\EE$ such that $\EE\cong\FF_1\oplus\FF_2$.
\end{itemize}
\end{definition}

The non-Abelian Hodge correspondence \cite{donaldson1987twisted,hitchin1987self,corlette1988flat,simpson1988constructing,simpson1992higgs} relate the moduli space of polystable Higgs bundles with the character variety. Here we only explain $G=\mathrm{SL}_n\mathbb{R}$ case. One may see \cite{garcia2009hitchin} for general real reductive Lie groups. Let $\mathcal{M}(X,\mathrm{SL}_n\mathbb{R})$ be the moduli space of polystable $\mathrm{SL}_n\mathbb{R}$-Higgs bundles over $X$ up to orthogonal gauge transformation.

\begin{theorem}[Non-Abelian Hodge correspondence for $\mathrm{SL}_n\mathbb R$]\label{thm:NAHstrong}
Let $(\mathcal E,\Phi,\mathcal B)$ be a polystable $\mathrm{SL}_n\mathbb R$-Higgs bundle. There exists a Hermitian metric $H$ on $\mathcal E$, compatible with $\mathcal B$ and inducing the standard metric on $\det\mathcal E\cong\mathcal O_X$, whose Chern connection $\nabla^H$ satisfies Hitchin's self-duality equation:
\begin{equation}\label{eq:Hitchin}
F(\nabla^H)+[\Phi\wedge\Phi^{*_H}]=0.
\end{equation}
Such a metric is called a \textbf{harmonic metric}. Equivalently, the connection $\mathrm D^H=\nabla^H+\Phi+\Phi^{*_H}$ is flat. Its holonomy is conjugate into $\mathrm{SL}_n\mathbb R$ and determines an element of the character variety $\mathfrak X(S,\mathrm{SL}_n\mathbb R)$. This construction induces a homeomorphism, called the non-Abelian Hodge correspondence:
\[
\operatorname{NAH}\colon\mathcal M(X,\mathrm{SL}_n\mathbb R)\longrightarrow\mathfrak X(S,\mathrm{SL}_n\mathbb R).
\]

Furthermore, if $(\mathcal E,\Phi,\mathcal B)$ is stable, then the harmonic metric is unique. 

\end{theorem}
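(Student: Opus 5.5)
The plan is to derive the $\mathrm{SL}_n\mathbb R$ statement from the $\mathrm{SL}_n\mathbb C$ correspondence of Hitchin, Simpson, Donaldson and Corlette, and then check that the extra datum $\mathcal B$ is respected at every step. There are four steps: existence of the metric, compatibility with $\mathcal B$, reality of the holonomy, and the homeomorphism.

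\textbf{Existence.} First forget $\mathcal B$. The $\mathrm{SL}_n\mathbb R$-stability condition implies polystability of the underlying $\mathrm{SL}_n\mathbb C$-Higgs bundle $(\mathcal E,\Phi)$, in the sense of \cite{garcia2009hitchin}. So it suffices to treat a stable summand of degree $0$ and take direct sums. For a stable $(\mathcal E,\Phi)$, I would use Simpson's heat flow for the Donaldson functional on the space of Hermitian metrics with fixed determinant, starting from a background metric $H_0$. The flow exists for all time, and the Donaldson functional decreases along it. The crucial point is a uniform $C^0$ bound on the endomorphism $h_t=H_0^{-1}H_t$. I would argue this by contradiction, following Uhlenbeck--Yau and Simpson. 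If $\sup|\log h_t|$ blows up, rescale $\log h_t$ and pass to a weak limit. Its eigenvalues are constant, and its eigenspaces define a filtration by $\Phi$-invariant weakly holomorphic subsheaves, which are saturated and hence subbundles on a curve. The resulting degree inequality contradicts stability. Given the $C^0$ bound, standard elliptic estimates give smooth convergence to a solution $H$ of \eqref{eq:Hitchin}. The determinant normalisation is preserved because the trace part decouples.

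\textbf{Compatibility with $\mathcal B$.} Given a harmonic metric $H$, define the dual metric $H^{\mathcal B}$ by transporting $\overline{H}^{-1}$ through the isomorphism $\mathcal E\cong\mathcal E^*$ induced by $\mathcal B$. Since $\Phi$ is $\mathcal B$-symmetric, $H^{\mathcal B}$ is again harmonic for $(\mathcal E,\Phi)$. Uniqueness of the harmonic metric in the stable case, which follows from convexity of the Donaldson functional along geodesics, forces $H^{\mathcal B}=H$. This is exactly the statement that $H$ is compatible with $\mathcal B$. In the polystable but non-stable case, I would use the isotropic/coisotropic decomposition in the definition of polystability to split $\mathcal E$ into $\mathcal B$-compatible pieces. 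Each piece is either stable or paired with its $\mathcal B$-dual, and on each piece one solves and averages with the dual metric.

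\textbf{Reality of the holonomy.} Compatibility means $\tau:=H^{-1}\overline{\mathcal B}$ is an antilinear involution. This involution commutes with $\nabla^H$ and exchanges $\Phi$ with $\Phi^{*_H}$. Hence $\mathrm D^H$ preserves the real form fixed by $\tau$, so the holonomy lies in $\mathrm{SL}_n\mathbb R$ up to conjugation.

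\textbf{Homeomorphism.} For the inverse map I would invoke Corlette and Donaldson. A reductive representation admits a $\rho$-equivariant harmonic map to $\mathrm{SL}_n\mathbb R/\mathrm{SO}_n$, and decomposing its derivative into types recovers a polystable $\mathrm{SL}_n\mathbb R$-Higgs bundle. One then checks that the two constructions are mutually inverse, using uniqueness of harmonic metrics and of harmonic maps up to the obvious symmetries. Continuity in both directions follows from the a priori estimates above, which are uniform on compact families, together with compactness and uniqueness.

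The main obstacle is the analytic $C^0$ estimate in the existence step, i.e.\ turning a blow-up of $\log h_t$ into a destabilising subsheaf. On a Riemann surface this is simpler than in higher dimensions, since weakly holomorphic subsheaves are automatically subbundles. I would follow Simpson's argument essentially verbatim.
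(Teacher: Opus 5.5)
The paper gives no proof of this theorem. It quotes it from Hitchin, Donaldson, Corlette and Simpson, and for the real form from Garc\'{\i}a-Prada--Gothen--Mundet i Riera, who run the Hitchin--Kobayashi correspondence directly on $\mathcal B$-compatible metrics (reductions to $\mathrm{SO}_n$) with the $\mathrm{SL}_n\mathbb R$-stability notion. Deducing the statement from the $\mathrm{SL}_n\mathbb C$ correspondence, as you do, is a legitimate alternative route.

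However, your step ``Compatibility with $\mathcal B$'' has a genuine gap: it conflates $\mathrm{SL}_n\mathbb R$-stability with $\mathrm{SL}_n\mathbb C$-stability. The harmonic metric of $(\mathcal E,\Phi)$ is unique only when $(\mathcal E,\Phi)$ is $\mathrm{SL}_n\mathbb C$-stable, and $\mathrm{SL}_n\mathbb R$-stability does not imply this. The paper's own base point is a counterexample. For $q=0$ the bundle is $\mathrm{SL}_3\mathbb R$-stable (Proposition~\ref{prop:stability}). As an $\mathrm{SL}_3\mathbb C$-Higgs bundle, though, it splits as $(\KK_X^{1/2}\oplus\KK_X^{-1/2},\Phi_0)\oplus(\mathcal O_X,0)$. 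Its harmonic metrics with trivial determinant are $H_c=\operatorname{diag}(c\,h_0,c^{-2},c\,h_0^{-1})$, $c>0$. One checks $H_c^{\mathcal B}=H_{1/c}$, so $H_c^{\mathcal B}\neq H_c$ for $c\neq1$. Uniqueness therefore does not force compatibility, and your existence step (solving on summands and taking direct sums) can perfectly well output such an $H_c$.

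Your remark about ``averaging with the dual metric'' points the right way, but you only invoke it in the non-stable case. For an $\mathrm{SL}_n\mathbb R$-stable bundle the isotropic/coisotropic splitting is vacuous, so this case rests on the false uniqueness claim. For the same reason, the final uniqueness assertion cannot come from convexity over all metrics. What holds, and what the theorem asserts, is uniqueness among $\mathcal B$-compatible metrics.

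The repair is short.

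\emph{Existence of a compatible metric.} Take any harmonic $H$ with trivial determinant and set $h:=H^{-1}H^{\mathcal B}$. Then $h$ is positive, $H$-self-adjoint, $\nabla^H$-parallel and commutes with $\Phi$, so every $Hh^{s}$ is again harmonic. The map $H\mapsto H^{\mathcal B}$ is a pointwise isometric involution of the space of metrics, and it swaps the endpoints of the geodesic $s\mapsto Hh^{s}$. Hence it fixes the midpoint, and $Hh^{1/2}$ is a $\mathcal B$-compatible harmonic metric (in the example this gives $H_1$).

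\emph{Uniqueness.} Let $H',H''$ be compatible harmonic metrics and set $k:=(H')^{-1}H''$. Then $k$ is parallel, commutes with $\Phi$, and preserves $\mathcal B$. Consequently $\mathcal B(E_\lambda,E_\mu)=0$ unless $\lambda\mu=1$. So each eigenbundle $E_\lambda$ with $\lambda\neq1$ is $\mathcal B$-isotropic and $\Phi$-invariant. It is also $\mathrm D^{H'}$-invariant, hence flat, hence of degree $0$. The $\mathrm{SL}_n\mathbb R$-stability forbids such a subbundle, so $k=\mathrm{id}$.

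Finally, you start from the implication ``$\mathrm{SL}_n\mathbb R$-polystable $\Rightarrow$ $\mathrm{SL}_n\mathbb C$-polystable''. Make sure this is justified without appealing to the real Hitchin--Kobayashi correspondence you are trying to prove, for instance algebraically via $\mathcal F\cap\mathcal F^{\perp_{\mathcal B}}$.
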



\subsection{Harmonic metrics for Bronstein--Davalo slice}

Bronstein--Davalo considered a slice in $\mathcal{M}(X,\mathrm{SL}_3\mathbb{R})$ consisting of the Higgs bundles of the following form:
\[(\EE,\Phi_q,\mathcal{B}):=\left(\KK_X^{1/2}\oplus\mathcal{O}_X\oplus\KK_X^{-1/2},\begin{pmatrix}
     &q&\\&&q\\\mathds{1}&&
 \end{pmatrix},\begin{pmatrix}
     0&0&1\\0&1&0\\1&0&0
 \end{pmatrix}\right)\]
in \cite{bronstein2025anosov}, where $\mathds{1}\colon\KK_X^{1/2}\to\KK_X^{-1/2}\otimes\KK_X$ is the tautological isomorphism and $q\in\mathrm{H}^0(X,\KK_X^{3/2})$. We have the stability of these Higgs bundles as $\mathrm{SL}_3\mathbb{R}$-Higgs bundles, which is essentially proved in \cite[Proposition 2.3]{bronstein2025anosov} for them as $\mathrm{SL}_3\mathbb{C}$-Higgs bundles.

\begin{proposition}\label{prop:stability}
    $(\EE,\Phi_q,\mathcal{B})$ is stable as an $\mathrm{SL}_3\mathbb{R}$-Higgs bundle.
\end{proposition}

\begin{proof}
    When $q\neq0$, \cite[Proposition 2.3]{bronstein2025anosov} proved that $(\EE,\Phi_q)$ is stable as an $\mathrm{SL}_3\mathbb{C}$-Higgs bundle, meaning every $\Phi_q$-invariant subbundle of $\EE$ has negative degree. This yields that $(\EE,\Phi_q,\mathcal{B})$ is stable as an $\mathrm{SL}_3\mathbb{R}$-Higgs bundle. When $q\equiv0$, although $(\EE,\Phi_0,\mathcal{B})$ is strictly polystable as an $\mathrm{SL}_3\mathbb{C}$-Higgs bundle, it is still stable as an $\mathrm{SL}_3\mathbb{R}$-Higgs bundle because the only $\Phi_0$-invariant $\mathcal{B}$-isotropic subbundle is $\KK_X^{-1/2}$, which has negative degree.
\end{proof}

Therefore, there exists a unique harmonic metric $H_q$ of $(\EE,\Phi_q,\mathcal{B})$. Since $(\EE,\Phi_q)$ is a $3$-cyclic Higgs bundle (see \cite{simpson2009katz}), $\KK_X^{1/2}\oplus\mathcal{O}_X\oplus\KK_X^{-1/2}$ is an orthogonal decomposition with respect to the harmonic metric. Furthermore, it is shown in \cite[Section 4.1]{bronstein2025anosov} (also that $H_q=\operatorname{diag}(h_q,1,h_q^{-1})$ with respect to the decomposition $\KK_X^{1/2}\oplus\mathcal{O}_X\oplus\KK_X^{-1/2}$, where $h_q$ is an Hermitian metric on $\KK_X^{1/2}$ and $h_q^{-1}$ is its dual metric.

Below we fix a nonzero holomorphic differential $q\in\mathrm{H}^0(X,\KK_X^{3/2})$ and simply denote, for $t\in\mathbb{R}$, by
\[(\EE,\Phi_{t}):=\left(\KK_X^{1/2}\oplus\mathcal{O}_X\oplus\KK_X^{-1/2},\begin{pmatrix}
     &tq&\\&&tq\\\mathds{1}&&
 \end{pmatrix}\right).\]
The corresponding harmonic metric $H_t:=H_{tq}$ satisfies that $H_t=\operatorname{diag}(h_t,1,h_t^{-1})$. Below, we simply write $\nabla_t:=\nabla^{H_t}$, $\Phi_t:=\Phi_{tq}$ and
\[\mathrm{D}_t=\nabla_t+\Phi_t+\Phi_t^{*_{H_t}}\]
as the corresponding flat connection. Let \[\rho_t:=\operatorname{NAH}(\EE,\Phi_t,\mathcal{B})=\operatorname{Hol}(\mathrm{D}_t)\] be the corresponding representation through the non-Abelian Hodge correspondence. As a corollary of \cite[Proposition 2.3]{bronstein2025anosov}, we obtain that 
\begin{corollary}\label{coro:irr}
    $\rho_t$ is irreducible when $t\neq0$.
\end{corollary}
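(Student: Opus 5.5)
The plan is to deduce irreducibility of $\rho_t$ from stability of the Higgs bundle via the non-Abelian Hodge correspondence, using the standard dictionary between $\mathrm{D}_t$-invariant subbundles and $\Phi_t$-invariant holomorphic subbundles. Fix $t\neq 0$, so $tq\neq 0$. By \cite[Proposition 2.3]{bronstein2025anosov}, as recalled in the proof of \prettyref{prop:stability}, $(\EE,\Phi_t)$ is stable as an $\mathrm{SL}_3\mathbb{C}$-Higgs bundle: every proper nonzero $\Phi_t$-invariant holomorphic subbundle has strictly negative degree. It then suffices to show that $\rho_t$ is irreducible as a representation into $\mathrm{SL}_3\mathbb{C}$, which implies irreducibility over $\mathbb{R}$.

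Suppose, for contradiction, that $\rho_t$ preserves a proper nonzero subspace of $\mathbb{C}^3$. This gives a $\mathrm{D}_t$-parallel smooth subbundle $F\subset E$ of the underlying complex vector bundle. First, $\rho_t$ is reductive, being the holonomy of a harmonic bundle; this is the content of \prettyref{thm:NAHstrong}, namely that the harmonic metric exists and the flat connection is semisimple. Hence $F^{\perp_{H_t}}$ is also $\mathrm{D}_t$-parallel.

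Next I would decompose $\mathrm{D}_t$ into its unitary and self-adjoint parts, $\nabla_t$ and $\Phi_t+\Phi_t^{*_{H_t}}$. The key point is that the $H_t$-orthogonal projection $\pi_F$ onto $F$ is $\mathrm{D}_t$-parallel. Splitting the equation $\mathrm{D}_t\pi_F=0$ into skew-Hermitian and Hermitian parts with respect to $H_t$ gives $\nabla_t\pi_F=0$ and $[\Phi_t+\Phi_t^{*},\pi_F]=0$. Taking $(1,0)$-parts of the second equation gives $[\Phi_t,\pi_F]=0$. Therefore $F$ is a holomorphic subbundle for $\bar\partial_{\nabla_t}$, which is the holomorphic structure of $\EE$, and it is $\Phi_t$-invariant. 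Moreover, since $\nabla_t$ preserves $F$ with vanishing second fundamental form, Chern--Weil gives $\deg F=\frac{\iu}{2\pi}\int \operatorname{tr}(F(\nabla_t)|_F)$. Substituting $F(\nabla_t)=-[\Phi_t\wedge\Phi_t^*]$ from \eqref{eq:Hitchin}, and using that $\Phi_t$ commutes with $\pi_F$, this trace becomes $\operatorname{tr}$ of a commutator of endomorphisms of $F$, which vanishes. Thus $\deg F=0$, contradicting stability. Hence $\rho_t$ is irreducible.

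The main obstacle is the step showing that parallel subbundles are holomorphic, $\Phi_t$-invariant, and of degree zero. This requires care with the splitting of $\mathrm{D}_t\pi_F=0$: it uses that $\pi_F$ is $H_t$-self-adjoint, so that $\nabla_t\pi_F$ is self-adjoint while $[\Phi_t+\Phi_t^*,\pi_F]$ is skew-adjoint. Both terms must therefore vanish separately. Everything else is quoted from \cite{bronstein2025anosov} and \prettyref{thm:NAHstrong}.
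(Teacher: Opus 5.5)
Your route matches the paper's: stability of $(\EE,\Phi_t)$ from \cite[Proposition 2.3]{bronstein2025anosov}, then ``stable $\Rightarrow$ irreducible over $\mathbb{C}$'' through non-Abelian Hodge, then complexification. The paper simply cites Simpson for the middle step, whereas you reprove it. In your reproof one inference fails as written: ``$\rho_t$ is reductive, hence $F^{\perp_{H_t}}$ is $\mathrm{D}_t$-parallel'' does not follow. Reductivity only provides \emph{some} invariant complement of $F$, not necessarily the $H_t$-orthogonal one. For example, take the trivial rank-$2$ flat bundle with a non-constant Hermitian metric: the representation is reductive, yet the orthogonal complement of a constant line is in general not parallel. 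That $\pi_F$ is $\mathrm{D}_t$-parallel is the one substantive point of the lemma, and it is exactly where harmonicity of $H_t$ has to be used. The type-splitting you single out as the main obstacle is the easy part.

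To close the gap, put $\theta:=\Phi_t+\Phi_t^{*_{H_t}}$ and $\beta:=\pi_F(\mathrm{D}_t\pi_F)(1-\pi_F)$. Invariance of $F$ together with $\pi_F^2=\pi_F$ gives $\mathrm{D}_t\pi_F=\beta$. Comparing $H_t$-self-adjoint and skew-adjoint parts yields $\nabla_t\pi_F=\tfrac12(\beta+\beta^{*})$. The $\mathrm{Hom}(F^{\perp},F)$-block of $[\theta,\pi_F]=\tfrac12(\beta-\beta^{*})$ then gives $\beta=-2\pi_F\theta(1-\pi_F)$. Harmonicity gives $\nabla_t^{*}\theta=0$ (this is $\bar\partial\Phi_t=0$ together with its adjoint). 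Hence $0=\int_X\langle\nabla_t^{*}\theta,\pi_F\rangle=\int_X\langle\theta,\nabla_t\pi_F\rangle=-\tfrac12\|\beta\|_{L^2}^2$, so $\beta=0$. Equivalently, the energy of the metrics $H_t(\mathrm{e}^{s\pi_F}\cdot,\cdot)$ equals a constant plus $c\,\mathrm{e}^{s}\|\beta\|^2$ with $c>0$, and $H_t$ is a critical point. With this step inserted, the rest of your argument is correct. Note also that $\deg F=0$ is immediate, since $F$ carries the flat connection $\mathrm{D}_t|_F$, so the Chern--Weil computation is unnecessary.
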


\begin{proof}
    Following from \cite[Proposition 2.3]{bronstein2025anosov}, $(\EE,\Phi_t)$ is stable as an $\mathrm{SL}_3\mathbb{C}$-Higgs bundle when $t\neq0$. Thus it gives an irreducible representation to $\mathrm{SL}_3\mathbb{C}$ through the non-Abelian Hodge correspondence, see for example \cite{simpson1992higgs}. Therefore, $\rho_t$ is irreducible since any real invariant subspace complexified to a complex invariant space.
\end{proof}

\begin{remark}
    It is a little bit confusing that the stable $\mathrm{SL}_3\mathbb{R}$-Higgs bundle $(\EE,\Phi_0,\mathcal{B})$ does not give an irreducible representation to $\mathrm{SL}_3\mathbb{R}$. This is because $(\EE,\Phi_0,\mathcal{B})$ admits non-trivial automorphism $\operatorname{diag}(-1,1,-1)$, which does not lie in the center of $\mathrm{SO}_3\mathbb{C}$. In general, to obtain the irreducibility from Higgs bundle, we also need the simpleness of the Higgs bundle. If we only have the stability, we can only say the corresponding representation if infinitesimally irreducible. See \cite[Theorem 3.32]{garcia2009hitchin}.
\end{remark}

Note that $g_t:=h_t^{-2}$ defines a conformal metric on $X$. Set $u_t$ as the function on $X$ such that $g_t=\mathrm{e}^{2u_t}g_0$. Then $u_0=0$.

Below we do some local computations. Fix a local coordinate $(U,z=x+\iu y)$ on $X$. Then we have the local frame $(\dd z^{1/2},1,\dd z^{-1/2})$ adapted to $\KK_X^{1/2},\mathcal{O}_X,\KK_X^{-1/2}$, the metric $h_t$ can be written as $h_{t}(z)|\dd z|$ for positive smooth functions $h_t(z)$ with a slight abuse of notation. We use $|\bullet|^2$ to denote the coordinate norm for a field with respect to the local coordinate and frames $(\dd z^{1/2},1,\dd z^{-1/2})$. With our choice of frame, we have $|\mathds{1}|^2=1$. 

Now the Hitchin's self-duality equation \prettyref{eq:Hitchin} implies that:
\begin{equation}\label{eq:localHit}
    \dfrac{\partial^2\log h_t}{\partial z\partial\bar z}=|tq|^2h_t-h_t^{-2}.
\end{equation}
by taking projection onto $\KK_X^{1/2}$. Hence taking $t=0$ we obtain that 
\[\dfrac{\partial^2\log h_0}{\partial z\partial\bar z}=-h_0^{-2}.\]
Then by
\[\dfrac{\partial^2\log h_t}{\partial z\partial\bar z}=\dfrac{\partial^2\log(\mathrm{e}^{-u_t}h_0)}{\partial z\partial\bar z}=-\dfrac{\partial^2 u_t}{\partial z\partial\bar z}+\dfrac{\partial^2\log h_0}{\partial z\partial\bar z}\]
and
\[|tq|^2h_t-h_t^{-2}=t^2|q|^2\mathrm{e}^{-u_t}h_0-\mathrm{e}^{2u_t}h_0^{-2},\]
we obtain that
\[\dfrac{1}{h_0^{-2}}\cdot{\dfrac{\partial^2 u_t}{\partial z\partial\bar z}}=\dfrac{1}{h_0^{-2}}\cdot\dfrac{\partial^2\log h_0}{\partial z\partial\bar z}-t^2|q|^2\mathrm{e}^{-u_t}h_0^3+\mathrm{e}^{2u_t}=-1-t^2|q|^2\mathrm{e}^{-u_t}h_0^3+\mathrm{e}^{2u_t}.\]
Note that \[\dfrac{1}{h_0^{-2}}\cdot{\dfrac{\partial^2}{\partial z\partial\bar z}}=\dfrac{1}{g_0}\cdot{\dfrac{\partial^2}{\partial z\partial\bar z}}=\dfrac{\Delta_{g_0}}{4}\] is globally well-defined, where $\Delta_{g_0}$ denotes the Laplace--Beltrami operator with respect to the metric $g_0$. And also $|q|^2h_0^3$ is globally well-defined, which is the norm square $|q|_{g_0}^2$ of $q$ with respect to the metric on $\KK_X^{3/2}$ induced by the metric $g_0$ on $\KK_X^{1/2}$.
Thus $u_t$ satisfies the following equation
\begin{equation}\label{eq:conformalHitchin}
    \Delta_{g_0}u_t=4(\mathrm{e}^{2u_t}-1-t^2\mathrm{e}^{-u_t}\cdot|q|_{g_0}^2).
\end{equation}

Also, using the curvature formula for conformal metrics, \prettyref{eq:localHit} yields that
\[K(g_t)=-4(1-t^2|q|_{g_t}^2)\geqslant-4,\]
where $K$ denotes the sectional curvature. When $t=0$, this gives us $K(g_0)=-4$ which is a constant. Therefore, $g_X=4g_0$.

Below we want to consider the analyticity of $u_t$ with respect to $t$. By \prettyref{prop:stability}, $(\EE,\Phi_t,\mathcal{B})$ is stable. The analyticity actually follows from the real-analyticity of the non-Abelian Hodge correspondence. For completeness, we give a proof of the analyticity directly from the equation \prettyref{eq:conformalHitchin}.

\begin{lemma}\label{lem:analycityofmetric}
    $u_t$ depends on $t$ analytically.
\end{lemma}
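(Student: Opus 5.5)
We prove the lemma with the analytic implicit function theorem in Banach spaces, applied directly to \prettyref{eq:conformalHitchin}. Fix $\alpha\in(0,1)$ and consider the real-analytic map
\[
F\colon \mathbb{R}\times C^{2,\alpha}(X)\to C^{0,\alpha}(X),\qquad F(t,u):=\Delta_{g_0}u-4\left(\mathrm{e}^{2u}-1-t^2\mathrm{e}^{-u}|q|_{g_0}^2\right).
\]
This map is real-analytic, and in fact polynomial in $t$. Indeed, $u\mapsto \mathrm{e}^{\pm u}$ is given by a power series converging in the Banach algebra $C^{2,\alpha}(X)\hookrightarrow C^{0,\alpha}(X)$, $\Delta_{g_0}$ is bounded linear, and $|q|^2_{g_0}$ is a fixed smooth function. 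By construction $F(t,u_t)=0$ for every $t$, and $u_0=0$. Elliptic regularity applied to the smooth metric $h_t$ gives $u_t\in C^\infty\subset C^{2,\alpha}$.

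The main step is to check that the linearization $L_t:=D_uF(t,u_t)$ is invertible for every $t$, not only at $t=0$. We compute
\[
L_t v=\Delta_{g_0}v-4\left(2\mathrm{e}^{2u_t}+t^2\mathrm{e}^{-u_t}|q|_{g_0}^2\right)v .
\]
With the sign convention where $\Delta_{g_0}$ is nonpositive (as in $K(g_0)=-4$ computations, $\Delta=4g_0^{-1}\partial\bar\partial$), this is $\Delta_{g_0}-c$ with a smooth potential $c>0$. The maximum principle then gives injectivity: at a positive maximum of $v$ we have $\Delta v\le 0$, so $L_tv<0$ there; the symmetric argument applies at a negative minimum. $L_t$ is Fredholm of index zero from $C^{2,\alpha}$ to $C^{0,\alpha}$ by Schauder theory, being a compact perturbation of $\Delta_{g_0}-1$, so injectivity implies it is an isomorphism.

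The analytic implicit function theorem now gives, for each $t_0$, a neighbourhood $I$ of $t_0$ and a real-analytic map $t\mapsto v(t)\in C^{2,\alpha}$ on $I$ with $v(t_0)=u_{t_0}$ and $F(t,v(t))=0$. This solution is unique among solutions near $u_{t_0}$.

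It remains to identify $v(t)$ with $u_t$. Any solution $u$ of \prettyref{eq:conformalHitchin} gives a metric $\operatorname{diag}(\mathrm{e}^{-u}h_0,1,\mathrm{e}^{u}h_0^{-1})$ solving Hitchin's equation for $(\EE,\Phi_t,\mathcal{B})$. This metric is compatible with $\mathcal{B}$ and has trivial determinant, so it is harmonic. By \prettyref{prop:stability} together with the uniqueness part of \prettyref{thm:NAHstrong}, it must be $H_t$. Alternatively, uniqueness can be seen directly: subtracting the equations for two solutions and applying the maximum principle shows they coincide, since the nonlinearity $4(\mathrm{e}^{2u}-t^2\mathrm{e}^{-u}|q|^2)$ is strictly increasing in $u$.

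Hence $v(t)=u_t$ on $I$. Since $t_0$ was arbitrary, $t\mapsto u_t\in C^{2,\alpha}(X)$ is real-analytic on all of $\mathbb{R}$; in particular its pointwise values and $t$-derivatives are analytic. I expect the only delicate points to be fixing the Laplacian sign convention, so that the potential in $L_t$ really has the right sign, and the monotonicity used in the uniqueness argument.
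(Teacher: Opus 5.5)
Your proposal is correct and follows the paper's proof: the analytic implicit function theorem applied to $(t,u)\mapsto\Delta_{g_0}u-4(\mathrm{e}^{2u}-1-t^2\mathrm{e}^{-u}|q|_{g_0}^2)$ from $\mathbb{R}\times\mathcal{C}^{2+\alpha}(X)$ to $\mathcal{C}^{\alpha}(X)$. In both, the linearization is shown invertible at every solution by the maximum principle plus index zero. You also make explicit the step the paper leaves implicit, namely identifying the local implicit-function solution with $u_t$, either through uniqueness of the harmonic metric or through the monotonicity and maximum-principle argument.
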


\begin{proof}
     When $t\equiv0$, we consider the following operator
    \[\begin{aligned}
        \mathcal F\colon\mathbb R\times\mathcal{C}^{2+\alpha}(X)&\longrightarrow\mathcal{C}^{\alpha}(X),\\
        (t,u)&\longmapsto\Delta_{g_0}u-4(\mathrm{e}^{2u}-1-t^2|q|_{g_0}^2\mathrm{e}^{-u}),
    \end{aligned}\]
where $\mathcal{C}^\beta(X)$ denotes the space of $\beta$-H\"older functions on $X$. Then $\mathcal{F}$ is real analytic and $\mathcal F(t,u_t)=0$. Then for $v\in \mathcal{C}^{2+\alpha}(X)$, the $u$-linearized operator $D_u\mathcal F$ is

\[(D_u\mathcal F)_{(t,u)}(v) = \left.\frac{\dd}{\dd s}\right|_{s=0}\mathcal F(t,u+sv)=
\Delta_{g_0}v -\left(8\mathrm{e}^{2u}+4t^2|q|_{g_0}^2\mathrm{e}^{-u}\right)v.
\]

Because $8\mathrm{e}^{2u}+4t^2|q|_{g_0}^2\mathrm{e}^{-u}>0$, $D_u\mathcal F$ has trivial kernel by the maximum principle. Thus $D_u\mathcal F$ is actually invertible at every solution $(t,u)$ since $\Delta_{g_0}$ has trivial index. Therefore the analytic implicit function theorem  for Banach spaces gives a unique real-analytic map
\[
t\longmapsto u_t\in \mathcal{C}^{2+\alpha}(X).
\]
\end{proof}

As a corollary, we obtain that
\begin{corollary}
    The harmonic metric $H_t$, the Chern connection $\nabla_t$, the flat connection $\mathrm{D}_t$ and the representation $\rho_t$ all depend on $t$ analytically.
\end{corollary}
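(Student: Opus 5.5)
The conclusion follows from Lemma~\ref{lem:analycityofmetric} by composing real-analytic maps.

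First, the harmonic metric. By construction $H_t=\operatorname{diag}(h_t,1,h_t^{-1})$ with $h_t=\mathrm{e}^{-u_t}h_0$. The map $u\mapsto \mathrm{e}^{\pm u}$ is real analytic from $\mathcal{C}^{2+\alpha}(X)$ to itself, because $\mathcal{C}^{2+\alpha}$ is a Banach algebra and the exponential series converges in it. Composing with the analytic curve $t\mapsto u_t$ from Lemma~\ref{lem:analycityofmetric} shows that $t\mapsto H_t$ is a real-analytic curve of $\mathcal{C}^{2+\alpha}$ Hermitian metrics.

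Next, the connections. In the local holomorphic frame $(\dd z^{1/2},1,\dd z^{-1/2})$, the Chern connection is $\nabla_t=\dd+H_t^{-1}\partial H_t$. Its connection form is $\operatorname{diag}(\partial\log h_t,0,-\partial\log h_t)=\operatorname{diag}(-\partial u_t+\partial\log h_0,\,0,\,\partial u_t-\partial\log h_0)$, which is affine in $u_t$. It therefore depends analytically on $t$ as a $\mathcal{C}^{1+\alpha}$ one-form. The Higgs field $\Phi_t$ is linear in $t$. Its adjoint is $\Phi_t^{*_{H_t}}=H_t^{-1}\overline{\Phi_t}^{T}H_t$, whose entries are $t\bar q\,\mathrm{e}^{\mp u_t}$-type expressions and $h_t^{-2}=\mathrm{e}^{2u_t}h_0^{-2}$, all analytic in $t$. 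Hence $\mathrm{D}_t=\nabla_t+\Phi_t+\Phi_t^{*_{H_t}}$ is a real-analytic family of flat connections on the fixed smooth bundle $\EE$, whose connection forms lie in $\mathcal{C}^{1+\alpha}$.

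Finally, the representation. Fix a basepoint and, for each generator of $\pi_1(S)$, a smooth loop $c$. The holonomy along $c$ is the time-one value of the linear ODE $Y'=-A_t(\dot c)Y$, where $A_t$ is the connection form of $\mathrm{D}_t$. The solution of a linear ODE depends analytically on coefficients that vary analytically in a Banach space of continuous coefficients. This follows either from the Dyson/Picard series, which converges uniformly, or from the analytic implicit function theorem applied to the solution map. So $\rho_t(\gamma)$ is analytic in $t$ for every generator, and hence for every $\gamma$, since products of analytic matrix-valued functions are analytic.

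The main subtlety is that the holonomy lies in $\mathrm{SL}_3\mathbb{C}$ and is only conjugate into $\mathrm{SL}_3\mathbb{R}$, so the real representative $\rho_t$ must be chosen analytically. I would do this by fixing, at the basepoint, the real structure determined by $\mathcal{B}$ and $H_t$. This is the fixed-point set of the antilinear involution $v\mapsto H_t^{-1}\overline{\mathcal{B}v}$, which depends analytically on $t$. A real-analytic frame of this real form can be obtained by applying Gram--Schmidt to analytic vectors, and conjugating by that frame gives an analytic family $\rho_t$ in $\mathrm{SL}_3\mathbb{R}$. Alternatively, the statement is read in the character variety, where the claim is immediate.
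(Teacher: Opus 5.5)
Your proposal is correct and takes the paper's route: the paper states this corollary as an immediate consequence of Lemma~\ref{lem:analycityofmetric} (real-analyticity of $t\mapsto u_t\in\mathcal{C}^{2+\alpha}(X)$) and writes out no further details. What you add is the implicit part: composing analytic maps for $H_t$, $\nabla_t$ and $\mathrm{D}_t$, getting analyticity of holonomy from the linear ODE, and choosing an analytic real frame for the fixed-point set of $v\mapsto H_t^{-1}\overline{\mathcal{B}v}$ so that $\rho_t$ is analytic in $\mathrm{Hom}(\pi_1(S),\mathrm{SL}_3\mathbb{R})$.
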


\subsection{Variations of the conformal coefficient}

\begin{lemma}\label{lem:variation-conf}
    \[\dot{u}_0=\dfrac{\dd u_t}{\dd t}\bigg|_{t=0}=0,\]
    and there exists a positive constant $m=m(q)>0$ such that \[\ddot{u}_0=\dfrac{\dd^2 u_t}{\dd t^2}\bigg|_{t=0}>m.\]
    As a corollary, \[\dfrac{\dd h_t}{\dd t}\bigg|_{t=0}=0,\quad\dfrac{\dd h_t^{-1}}{\dd t}\bigg|_{t=0}=0.\]
\end{lemma}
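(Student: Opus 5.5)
We differentiate the equation \prettyref{eq:conformalHitchin} in $t$ at $t=0$; this is legitimate because $t\mapsto u_t\in\mathcal{C}^{2+\alpha}(X)$ is real analytic by \prettyref{lem:analycityofmetric}.

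\textbf{First variation.} One derivative in $t$ gives
\[
\Delta_{g_0}\dot u_t=8\mathrm{e}^{2u_t}\dot u_t-8t\,\mathrm{e}^{-u_t}|q|_{g_0}^2+4t^2\mathrm{e}^{-u_t}\dot u_t|q|_{g_0}^2 .
\]
At $t=0$, using $u_0=0$, this becomes $\Delta_{g_0}\dot u_0=8\dot u_0$. Multiply by $\dot u_0$ and integrate over $X$. The left side gives $-\int|\nabla\dot u_0|^2\le 0$ and the right side gives $8\int\dot u_0^2\ge 0$, so $\dot u_0=0$. Alternatively, the maximum principle gives the same conclusion, since $D_u\mathcal F$ at $(0,0)$ is injective.

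\textbf{Second variation.} Differentiate once more and evaluate at $t=0$, using $u_0=\dot u_0=0$. The only surviving terms are
\[
\Delta_{g_0}\ddot u_0=8\ddot u_0-8|q|_{g_0}^2 .
\]
So $\ddot u_0$ solves $(8-\Delta_{g_0})\ddot u_0=8|q|_{g_0}^2$. The right-hand side is nonnegative and not identically zero, because $q\neq0$ and its zeros are isolated.

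The key step, which I expect to be the main point, is to get a uniform positive lower bound rather than mere nonnegativity. The plan has two parts.

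\begin{itemize}
\item \emph{Nonnegativity.} Take a minimum point $p$ of $\ddot u_0$. There $\Delta_{g_0}\ddot u_0(p)\ge0$, so $8\ddot u_0(p)\ge 8|q|^2_{g_0}(p)\ge0$, hence $\ddot u_0\ge 0$.
\item \emph{Strict positivity.} Suppose $\ddot u_0(p)=0$ at some point $p$. Then $p$ is an interior minimum of the solution $w=\ddot u_0\ge0$ of $\Delta_{g_0}w-8w=-8|q|^2_{g_0}\le0$. The strong maximum principle (Hopf) then forces $w\equiv0$, which contradicts $|q|_{g_0}\not\equiv0$. Hence $\ddot u_0>0$ everywhere.
\end{itemize}

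As a check or alternative route, one can write $\ddot u_0=8\int G(x,y)|q|^2_{g_0}(y)\,dy$, where $G$ is the Green's function of $8-\Delta_{g_0}$, which is strictly positive. Since $X$ is compact and $\ddot u_0$ is continuous, its positive minimum gives a constant $m>0$ with $\ddot u_0>m$; this $m$ depends only on $q$.

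\textbf{Corollary.} Since $h_t=\mathrm{e}^{-u_t}h_0$, we get $\frac{\dd h_t}{\dd t}\big|_{t=0}=-\dot u_0 h_0=0$. Likewise $h_t^{-1}=\mathrm{e}^{u_t}h_0^{-1}$ has zero first derivative at $t=0$.
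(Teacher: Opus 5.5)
Your proposal is correct and follows the paper's proof essentially step for step. Differentiating \eqref{eq:conformalHitchin} at $t=0$ gives $(\Delta_{g_0}-8)\dot u_0=0$, hence $\dot u_0=0$, and then $(8-\Delta_{g_0})\ddot u_0=8|q|^2_{g_0}$, from which the minimum-point argument and the strong maximum principle give $\ddot u_0>0$, with compactness of $X$ supplying $m$; your integration-by-parts argument for $\dot u_0$ and the Green's function remark are only minor variants of the same reasoning.
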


\begin{proof}
    Take $\dd/\dd t$ on \prettyref{eq:conformalHitchin}, we obtain that
\begin{equation}\label{eq:first-derivative}
    \Delta_{g_0}\dot{u}_t=4(2\dot{u}_t\mathrm{e}^{2u_t}-2t\mathrm{e}^{-u_t}\cdot|q|_{g_0}^2+t^2\dot{u}_t\mathrm{e}^{-u_t}\cdot|q|_{g_0}^2).
\end{equation}
When $t=0$, substitute $u_0=0$ into \prettyref{eq:first-derivative} implies that
\[(\Delta_{g_0}-8)\dot{u}_0=0.\]
Therefore, $\dot{u}_0=0$ by the maximum principle.

Now take the derivative $\dd/\dd t$ on \prettyref{eq:first-derivative}, we obtain that
\begin{equation}\label{eq:second-derivative}
    \begin{aligned}
        &\Delta_{g_0}\ddot{u}_t\\
        =&4\left(2\ddot{u}_t\mathrm{e}^{2u_t}+4(\dot{u}_t)^2\mathrm{e}^{2u_t}-2\mathrm{e}^{-u_t}\cdot|q|_{g_0}^2+2t\dot{u}_t\mathrm{e}^{-u_t}\cdot|q|_{g_0}^2\right.\\
    &\left.+2t\dot{u}_t\mathrm{e}^{-u_t}\cdot|q|_{g_0}^2+t^2\ddot{u}_t\mathrm{e}^{-u_t}\cdot|q|_{g_0}^2-t^2(\dot{u}_t)^2\mathrm{e}^{-u_t}\cdot|q|_{g_0}^2\right).
    \end{aligned}
\end{equation}
When $t=0$, substitute $u_0=0$ and $\dot{u}_0=0$ into \prettyref{eq:second-derivative} implies that
\[(\Delta_{g_0}-8)\ddot{u}_0=-8|q|_{g_0}^2.\]
Let $x_0$ be the minima of $\ddot{u}_0$, which exists since $X$ is compact. Then 
\[8\ddot{u}_0(x_0)\geqslant(8-\Delta_{g_0})\ddot{u}_0(x_0)=8|q|_{g_0}^2(x_0)\geqslant0,\]
and the strong maximum principle yields that if the equality holds, then $|q|_{g_0}^2\equiv0$, which contradicts to $q\neq0$. Therefore, $\ddot{u}_0>0$ and the compactness of $X$ implies that there exists a positive constant $m=m(q)$ such that $\ddot{u}_0>m$.
\end{proof}

\section{Variations of the eigenvalues}\label{sec: variations of eigengap}
We use a dot (e.g., $\dot{f}$) to denote the derivative with respect to the parameter $t$, and a prime (e.g., $f'$) to denote the derivative with respect to the parameter $s$.

Recall that we fix a nonzero $q\in\mathrm{H}^0(X,\KK_X^{3/2})$ and define the Higgs bundle $(\EE,\Phi_t,\mathcal{B})$ with harmonic metric $H_t$. Also we have the corresponding flat connection
\[\mathrm{D}_t=\nabla_t+\Phi_t+\Phi_t^{*_{H_t}}\]
and its holonomy $\rho_t$.

 We denote $a_i(t,\gamma):=\log|\lambda_i(\rho_t(\gamma))|$, where $i=1,2,3$ and $\gamma\in\pi_1(S)$. When $t=0$, since $\rho_0=j_X\oplus\mathbf{1}$, 
\[a_1(0,\gamma)=\dfrac{\ell_X(\gamma)}{2},\quad a_2(0,\gamma)=0,\quad a_3(0,\gamma)=-\dfrac{\ell_X(\gamma)}{2}.\] 
The goal of this section is to prove the following theorem:
\begin{theorem}
    \label{thm:variation-flat}
    \[\dot a_1(0,\gamma)=0,\quad \dot a_3(0,\gamma)=0\]
    and there exists a positive constant $m=m(q)>0$ such that
    \[\ddot a_1(0,\gamma)\geqslant \dfrac{m\cdot\ell_X(\gamma)}{2},\quad \ddot a_3(0,\gamma)\leqslant -\dfrac{m\cdot\ell_X(\gamma)}{2},\] for any $\gamma\in\pi_1(S)$.
\end{theorem}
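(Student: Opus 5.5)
The plan is to compute the first two $t$-derivatives of $a_1(t,\gamma)$ by perturbing the holonomy of $\mathrm{D}_t$ along the closed $g_X$-geodesic representing $\gamma$. The $a_3$ statements then come for free. Indeed $\rho_t$ preserves the nondegenerate form induced by $\mathcal B$, so $\rho_t(\gamma)$ is conjugate to its inverse transpose. Hence $\lambda_3(\rho_t(\gamma))=\lambda_1(\rho_t(\gamma))^{-1}$ and $a_3(t,\gamma)=-a_1(t,\gamma)$ for all $t$. It therefore suffices to prove $\dot a_1(0,\gamma)=0$ and $\ddot a_1(0,\gamma)\geqslant m\,\ell_X(\gamma)/2$.

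\textbf{First variation.} I would use the Labourie--Wentworth variation formula. Let $c$ be the closed geodesic, parametrized by $g_X$-arclength. Let $\xi$ be the $\mathrm{D}_0$-parallel section along the lift of $c$ spanning the attracting eigenline, and let $\xi^*$ be the parallel section of the dual bundle spanning the dual eigenline, normalized by $\xi^*(\xi)=1$. The formula gives
\[
\dot a_1(0,\gamma)=\mathrm{Re}\int_c \xi^*\big(\dot{\mathrm D}_0(c')\,\xi\big)\,\dd s .
\]
At $t=0$ the flat bundle splits $\mathrm D_0$-invariantly as $(\KK_X^{1/2}\oplus\KK_X^{-1/2})\oplus\mathcal O_X$, with the Fuchsian part carrying $j_X$. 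Both $\xi$ and $\xi^*$ lie in the Fuchsian summand. By \prettyref{lem:variation-conf}, $\dot h_0=0$, so $\dot\nabla_0=0$ and
\[
\dot{\mathrm D}_0=\dot\Phi_0+\dot\Phi_0^{*_{H_0}}.
\]
Here $\dot\Phi_0$ has only the two $q$-entries, $\mathcal O_X\to\KK_X^{1/2}\otimes\KK_X$ and $\KK_X^{-1/2}\to\mathcal O_X\otimes\KK_X$. So $\dot{\mathrm D}_0$ is off-diagonal with respect to the splitting, the integrand vanishes identically, and $\dot a_1(0,\gamma)=0$.

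\textbf{Second variation.} Second-order perturbation theory for a simple eigenvalue gives two contributions.
\begin{enumerate}[label=(\roman*)]
\item A diagonal term $\mathrm{Re}\int_c\xi^*(\ddot{\mathrm D}_0(c')\xi)\,\dd s$. Only $\ddot\nabla_0$ on the Fuchsian block survives, and it is the variation of the Chern connection under the conformal change $h_t=\mathrm e^{-u_t}h_0$. Since $\dot u_0=0$, this term should reduce to $\int_c \ddot u_0\,\dd s$ times a constant coming from the normalization $g_X=4g_0$. This is the mechanism by which a longer $g_t$-length of $c$ increases $a_1$.
\item A cross term of the form
\[
2\,\mathrm{Re}\iint_{s_1<s_2}\xi^*\big(\dot{\mathrm D}_0 P_{s_2,s_1}\dot{\mathrm D}_0\,\xi\big)
\]
(suitably resummed over the cyclic covering). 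Here $\dot{\mathrm D}_0$ sends the Fuchsian line into $\mathcal O_X$, parallel transport $P$ acts there trivially, and $\dot{\mathrm D}_0$ then returns to the Fuchsian block. Because of the eigenvalue gap $\ell_X(\gamma)/2>0$ between the Fuchsian and trivial summands, this double integral converges exponentially in $|s_2-s_1|$. Hence it can be written as $\int_c G(c'(s))\,\dd s$ for a Hölder function $G$ on $T^1S$ built from $q$ and $\bar q$ along the geodesic flow.
\end{enumerate}
Altogether,
\[
\ddot a_1(0,\gamma)=\int_c F\,\dd s,\qquad F=\kappa\,\ddot u_0+G\ \text{ on } T^1S,
\]
for an explicit constant $\kappa>0$.

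\textbf{Uniform positivity.} By the period formula, $\int_c F\,\dd s=\ell_X(\gamma)\int F\,\dd\mu_\gamma$, where $\mu_\gamma$ is the invariant probability measure on the periodic orbit. So it suffices to show $\int F\,\dd\mu>0$ for every geodesic-flow-invariant probability measure $\mu$; compactness of this set and continuity of $\mu\mapsto\int F\,\dd\mu$ then give a uniform $m$. This is the thermodynamic step. I expect the main obstacle to be controlling the sign of the cross term $G$, which need not be pointwise nonnegative. My first attempt would be to show $G$ is cohomologous along the flow to a nonnegative function, or that it combines with the $\ddot u_0$ term via the equation $(\Delta_{g_0}-8)\ddot u_0=-8|q|^2_{g_0}$ into something cohomologous to a strictly positive function. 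If that fails, I would argue by contradiction: a measure with $\int F\,\dd\mu\leqslant 0$ would, by the strict maximum-principle bound $\ddot u_0>m$ from \prettyref{lem:variation-conf}, force a degeneration incompatible with $q\neq0$.
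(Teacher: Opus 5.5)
Your first-variation argument is correct and is the paper's: $\dot{\mathrm D}_0$ only couples the Fuchsian block to $\mathcal O_X$, so its diagonal entries in the eigenframe vanish. The genuine gap is the cross term (ii). You never determine its sign, and without it nothing beyond $\dot a_1(0,\gamma)=0$ is proved. Your fallbacks do not close this. The equation $(\Delta_{g_0}-8)\ddot u_0=-8|q|^2_{g_0}$ and the bound $\ddot u_0>m$ give no control over a term of unknown sign. The invariant-measure reduction still needs $\int G\,\dd\mu\geqslant 0$.

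The missing idea is that the cross term is a sum of squares up to a coboundary, which the paper extracts from an explicit ODE. Work in the $H_0$-unitary, $\nabla_0$-flat eigenframe $\mathbf u$. There $A_0=\operatorname{diag}(-1,0,1)$, and $B=\dot A_0$ is Hermitian with $B_{11}=B_{13}=0$. Put $\beta=B_{12}$, so $B_{21}=\bar\beta$. Differentiating $P_1'+[A_t,P_1]=0$ in $t$ shows that the only nonzero entries of $\dot P_1$ are $x=(\dot P_1)_{12}$ and $y=(\dot P_1)_{21}$, with $x'-x=\beta$ and $y'+y=-\bar\beta$. The $(1,3)$ and $(3,1)$ entries satisfy $p'\mp 2p=0$ with $|p|$ periodic, so they vanish. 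Hence, pointwise,
\[
-\operatorname{Re}\operatorname{tr}(B\dot P_1)=-\operatorname{Re}(\beta y+\bar\beta x)=|x|^2+|y|^2+\frac12\frac{\dd}{\dd s}\big(|y|^2-|x|^2\big).
\]
The last term integrates to zero because $|x|$ and $|y|$ are periodic along the closed orbit. So your $G$ is indeed cohomologous to the nonnegative function $|x|^2+|y|^2$. This gives $\ddot a_1(0,\gamma)\geqslant\int_0^{T_\gamma}\ddot u_0\,\dd s\geqslant m\,T_\gamma=m\,\ell_X(\gamma)/2$ orbit by orbit. No thermodynamic formalism is needed at this stage; the only structural input is that, in a unitary frame, $A_0$ is real diagonal and $B$ is Hermitian.

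Two further points need correcting.

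First, in (i) the Chern term does not survive. In the holomorphic splitting $\ddot\nabla_0(c')=\operatorname{diag}(-\alpha,0,\alpha)$, which becomes off-diagonal in the eigenframe $u_\pm=(e_+\mp e_-)/\sqrt2$. In any case $\operatorname{Re}\alpha$ is an $s$-derivative of $\ddot u_0$ along $c$, so it would integrate to zero. The $\ddot u_0$ actually comes from the adjoint of the tautological part $\mathds 1$ of the Higgs field, whose coefficient is $\mathrm e^{2u_t}$. Its second derivative gives $C_{11}=-\ddot u_0$ in $\mathbf u$, hence exactly $\int_0^{T_\gamma}\ddot u_0\,\dd s$ in $g_0$-arclength. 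Your $g_t$-length heuristic is right, but the mechanism lives in the Higgs field, not the Chern connection.

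Second, your reduction of $a_3$ to $a_1$ is wrongly justified. $\mathrm D_t$ does not preserve $\mathcal B$: $\Phi_t+\Phi_t^{*_{H_t}}$ is $\mathcal B$-symmetric rather than skew, and $\mathcal B$ together with $H_t$ only defines the real structure. In fact, for $t\neq0$ the contragredient of $\rho_t$ corresponds to $(\EE^*,-\Phi_t^{T})\cong(\EE,-\Phi_t)$. Since $\det(-\Phi_t)=-t^2q^2\neq t^2q^2=\det\Phi_t$, $\rho_t$ is not self-dual, and $a_3(t,\gamma)=-a_1(t,\gamma)$ should not be expected. The correct reduction is $a_3(t,\gamma)=-a_1(t,\gamma^{-1})$ together with $\ell_X(\gamma^{-1})=\ell_X(\gamma)$. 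Alternatively, do the analogous computation with $P_3$, where the same integration by parts gives a cross term $-\int(|v|^2+|w|^2)\,\dd s\leqslant 0$.
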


\subsection{The initial flat connection along a closed geodesic}

Fix a nontrivial $\gamma\in\pi_1(S)$. Let $c\colon[0,T_\gamma]\to S$ be the unit-speed $g_0$-geodesic corresponding to $\gamma$. Then $T_\gamma=\ell_X(\gamma)/2$.

Let $\mathbf{e}=(e_+,e_0,e_-)$ be a $\nabla^{H_0}$-flat frame along $c$ which is adapted to $\KK_X^{1/2}\oplus\mathcal{O}_X\oplus\KK_X^{-1/2}$ and $H_0$-unitary. With respect to this frame, we can express
\[c^*\mathrm{D}_0=\dd+A_0\dd s,\]
where $c^*\mathrm{D}_0$ is the connection on the pulled back bundle, $s$ is the arc-parameter on $c$. Let $c'=\dd c/\dd s$ denotes the tangent vector field of $c$. Then 
\[A_0=\Phi_0(c')+\Phi_0^{*_{H_0}}(c')=\Phi_0((c')^{1,0})+\Phi_0^{*_{H_0}}((c')^{0,1})\]
by definition.

The following proposition is essentially proven in \cite[Proposition 4.1]{bronstein2025anosov}. We give a proof without using the notion of parallel distribution. Let
$\nabla^{\End}_0$ be the induced connection on $\End(\EE)$. 

\begin{proposition}\label{prop:parallelism}
For real vector fields $U,Y$ on $X$, define
\[
V(Y):=\Phi_0(Y^{1,0})+\Phi_0^{*_{H_0}}(Y^{0,1}).\]
Then
\[
\nabla^{\End}_{0,U}V(Y)=V(\nabla^{g_0}_U Y).
\]
In particular, if $c$ is a $g_0$-geodesic, then
\[
\nabla^{\End}_{0,c'}V(c')=0.
\]
\end{proposition}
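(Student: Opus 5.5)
The identity is tensorial in $Y$ up to the Leibniz rule, and $V(Y)$ has only two nonzero components at $t=0$. So the plan is to compute $\nabla^{\End}_{0,U}$ of those components directly in a local holomorphic coordinate and compare them with the Levi-Civita connection of $g_0$. At $t=0$ the Higgs field is $\Phi_0=\mathds{1}$, viewed as a section of $\mathrm{Hom}(\KK_X^{1/2},\KK_X^{-1/2})\otimes\KK_X$. Thus $\Phi_0(Y^{1,0})=\mathds{1}(Y^{1,0})$ maps $\KK_X^{1/2}\to\KK_X^{-1/2}$. Its $H_0$-adjoint $\Phi_0^{*_{H_0}}(Y^{0,1})$ maps $\KK_X^{-1/2}\to\KK_X^{1/2}$.

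\textbf{Step 1 (reduction to the Higgs field).} Since $\nabla^{\End}_0$ is a connection on $\End(\EE)\otimes T^*X$ once coupled with a connection on $TX$, the Leibniz rule gives
\[
\nabla^{\End}_{0,U}\big(\Phi_0(Y^{1,0})\big)=(\widetilde\nabla_U\Phi_0)(Y^{1,0})+\Phi_0\big((\nabla^{g_0}_UY)^{1,0}\big).
\]
Here $\widetilde\nabla$ is the connection on $\End(\EE)\otimes\KK_X$ induced by $\nabla_0$ and by the Chern connection of $g_0$ on $\KK_X$. We use that $\nabla^{g_0}$ preserves the type decomposition, since $g_0$ is Kähler. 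The same formula holds for the adjoint term. So it suffices to show $\widetilde\nabla\Phi_0=0$, and hence $\widetilde\nabla\Phi_0^{*_{H_0}}=0$, because adjunction commutes with metric connections.

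\textbf{Step 2 (key computation).} $\Phi_0$ is the tautological isomorphism $\KK_X^{1/2}\cong\KK_X^{-1/2}\otimes\KK_X$. The metric $H_0$ restricts to $h_0$ on $\KK_X^{1/2}$ and to $h_0^{-1}$ on $\KK_X^{-1/2}$, and $g_0=h_0^{-2}$ induces $h_0^{2}$ on $\KK_X$. Hence $\Phi_0$ is an isometry between Hermitian holomorphic line bundles: locally $|\mathds{1}|^2\,h_0^{-1}\cdot h_0^{2}/h_0=1$. A holomorphic isometry intertwines the Chern connections, so $\Phi_0$ is parallel. Concretely, in the frame $\dd z^{1/2}$, the Chern connection forms are $\partial\log h_0$ on $\KK_X^{1/2}$, $-\partial\log h_0$ on $\KK_X^{-1/2}$, and $2\partial\log h_0$ on $\KK_X$. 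They cancel: $-\partial\log h_0+2\partial\log h_0-\partial\log h_0=0$.

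The one point to be careful about is the identification of the $g_0$ Levi-Civita connection on $TX^{1,0}$ with the Chern connection of $g_0$ on $\KK_X^{-1}$. This is standard for Kähler metrics on a Riemann surface, and it is exactly what the conventions of Step 1 require. I expect the main (and mild) obstacle to be bookkeeping of signs and conventions here, rather than any analysis.

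\textbf{Step 3.} Adding the two terms gives $\nabla^{\End}_{0,U}V(Y)=V(\nabla^{g_0}_UY)$. For a $g_0$-geodesic we have $\nabla^{g_0}_{c'}c'=0$, which yields $\nabla^{\End}_{0,c'}V(c')=0$.
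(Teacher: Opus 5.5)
Your proof is correct and is essentially the paper's argument, written invariantly. The paper works in the frame $(s,1,s^{-1})$ and checks that the factor $-2\partial\log h_0$ produced by $\nabla_0^{\End}$ on $E_{31}$ equals the Christoffel symbol $\Gamma^z_{zz}=-2\partial_z\log h_0$ of $g_0$; this is exactly your cancellation $-\partial\log h_0+2\partial\log h_0-\partial\log h_0=0$, which shows $\Phi_0$ is parallel. The paper's remark that the derivative of $h_0^{-2}$ cancels the $2\partial\log h_0$ term in the $E_{13}$ component is the coordinate form of your observation that $H_0$-adjunction commutes with the metric connection.
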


\begin{proof}
Choose a holomorphic coordinate $z$ and a local holomorphic frame $s$ of
$\KK_X^{1/2}$ with $s^2=\dd z$. In the frame $(s,1,s^{-1})$ write
$|s|_{H_0}^2=h_0$.
Then $g_0=h_0^{-2}|\dd z|^2$.

Let $E_{ij}$ denote the elementary matrix. Then $\Phi_0=E_{31}\dd z$ and
$\Phi_0^{*_{H_0}}=h_0^{-2}E_{13}\dd\bar z$.
For
\[
Y=y\,\partial_z+\bar y\,\partial_{\bar z},
\]
we therefore have
\begin{equation}\label{eq:Vlocal}
V(Y)=yE_{31}+h_0^{-2}\bar yE_{13}.
\end{equation}

The Chern connection $\nabla_0$ is
\[
\nabla_0=\dd+\operatorname{diag}(\partial\log h_0,0,-\partial\log h_0).
\]
Hence
\[
\nabla_0^{\End}E_{31}=-2\partial\log h_0\,E_{31},
\qquad
\nabla_0^{\End}E_{13}=2\partial\log h_0\,E_{13}.
\]
If
\[
U=u\partial_z+\bar u\partial_{\bar z},
\]
then differentiating \eqref{eq:Vlocal} gives
\begin{equation}\label{eq:Vderivative}
\nabla^{\End}_{0,U}V(Y)
=A E_{31}+h_0^{-2}\bar A E_{13},
\end{equation}
where
\begin{equation}\label{eq:Acoef}
A=U(y)-2u(\partial_z\log h_0)y.
\end{equation}
Indeed, in the $E_{13}$ term the derivative of $h_0^{-2}$ cancels the
$2\partial\log h_0$ contribution from the induced Chern connection.

For the conformal metric $g_0=h_0^{-2}|dz|^2$, the Christoffel symbols are
\[
\Gamma^z_{zz}=\partial_z\log(h_0^{-2})=-2\partial_z\log h_0,
\qquad
\Gamma^z_{\bar z z}=0.
\]
Consequently
\[
(\nabla_U^{g_0}Y)^{1,0}=A\,\partial_z.
\]
Applying \eqref{eq:Vlocal} to $\nabla_U^{g_0}Y$ gives exactly the right-hand
side of \eqref{eq:Vderivative}. 
The geodesic case follows from the fact that $\nabla^{g_0}_{c'}c'=0$.
\end{proof}

Therefore, by \prettyref{prop:parallelism}, 
\[A_0(\mathbf{e})=V(c')(\mathbf{e})=\mathbf{e}\cdot\begin{pmatrix}
    0&0&\bar\eta\\0&0&0\\\eta&0&0
\end{pmatrix}\]
for a constant $\eta\in\mathbb{C}^*$. Since $c$ is of unit-speed, $|\eta|^2=1$. Replacing $e_-$ by $\eta e_-$, we can assume that $\eta=1$ and thus
\[A_0(\mathbf{e})=\mathbf{e}\cdot\begin{pmatrix}
    0&0&1\\0&0&0\\1&0&0
\end{pmatrix}\]
for the frame $\mathbf{e}=(e_+,e_0,e_-)$. We can diagonalize it by another $\nabla^{H_0}$-flat and $H_0$-unitary frame $\mathbf{u}:=(u_+,u_0,u_-)$, where
\[u_+:=\dfrac{e_+-e_-}{\sqrt{2}},\quad u_0:=e_0,\quad u_-:=\dfrac{e_++e_-}{\sqrt{2}}.\]
For instance, \begin{equation}\label{eq:A0}
    A_0(\mathbf{u})=\mathbf{u}\cdot\operatorname{diag}(-1,0,1).
\end{equation}

\subsection{The first variation}
Below we write the Taylor expansion
\[c^*\mathrm{D}_t=\dd+A_t\dd s=\dd+\left(A_0+tB+\dfrac{t^2}{2}C+O(|t|^3)\right)\dd s,\]
where $B=\dot{A}_0$, $C=\ddot{A}_0$.

Note that the Chern connection can be written as
\[\nabla_t=\dd+H_t^{-1}\partial H_t.\]
Thus
\[\dot{\nabla}_0=\dfrac{\dd\nabla_t}{\dd t}\bigg|_{t=0}=-H_0^{-1}\dot{H}_0H_0^{-1}\partial H_0+H_0^{-1}\partial\dot{H}_0=0\]
by $\dot{H}_0=0$, which is obtained from \prettyref{lem:variation-conf}. 

The $\mathds{1}$-part of the Higgs field is independent of $t$, and its adjoint has zero first derivative because $\dot{H}_0=0$.  Therefore, $B$ comes only from the linear $q$-term and its adjoint. Evaluated on a real tangent
vector this is Hermitian. Thus
\[B(\mathbf{e})=\mathbf{e}\cdot\begin{pmatrix}
    0&Q&0\\\overline Q&0&Q^\star\\0&\overline{Q^\star}&0
\end{pmatrix},\]
where $Q,Q^\star$ are complex-valued functions with $Q^\star=\eta^\star Q$ for $\eta^\star \in \mathbb C,\  |\eta^\star | = 1$ since the frame $\mathbf{e}$ is $H_0$-unitary and $\nabla_0$-flat. Therefore, by replacing $e_0$ by $\sqrt{\eta^\star}e_0$, we can assume that $Q=Q^\star$. Let $Q=Q_1+\iu Q_2$ for real-valued functions $Q_1,Q_2$. Then
\begin{equation}\label{eq:Bmatrix}
    B(\mathbf{u})=\mathbf{u}\cdot\begin{pmatrix}
    0&\sqrt{2}\iu Q_2&0\\-\sqrt{2}\iu Q_2&0&\sqrt{2}Q_1\\0&\sqrt{2}Q_1&0
\end{pmatrix}.
\end{equation}

\subsection{The second variation}

We decompose $C$ as
\[
C=C_{\rm Ch}+C_{\mathds{1}}+C_q,
\]
where $C_{\rm Ch}$ arises from the Chern connection $\nabla_t$, $C_{\mathds{1}}$ arises from the $\mathds{1}$-part of the Higgs field and its adjoint, and $C_q$ arises from the linear $q$-term of the Higgs field and its adjoint.

\subsubsection*{The Chern term}

Recall that the harmonic metric $H_t=\operatorname{diag}(h_t,1,h_t^{-1})$ with respect to the decomposition $\EE=\KK_X^{1/2}\oplus\mathcal{O}_X\oplus\KK_X^{-1/2}$. 
By definition,
\[
\log h_t=\log h_0-u_t.
\]
Therefore,
\[
C_{\rm Ch}(\mathbf{e})=\ddot{\nabla}_0(c')(\mathbf{e})=
\mathbf{e}\cdot\begin{pmatrix}
-\alpha&0&0\\
0&0&0\\
0&0&\alpha
\end{pmatrix}
\]
for a complex function $\alpha$ proportional to
$\partial\ddot{u}_0(c'^{1,0})$. 
Hence
\begin{equation}\label{eq:Cchern-diag}
C_{\rm Ch}(\mathbf{u})=\ddot{\nabla}_0(c')(\mathbf{u})=
\mathbf{u}\cdot\begin{pmatrix}
0&0&-\alpha\\
0&0&0\\
-\alpha&0&0
\end{pmatrix}.
\end{equation}

\subsubsection*{The \texorpdfstring{$q$}{q}-term}

Write $\Phi_t=\Phi_0+t\Psi$.  The holomorphic term $t\Psi$ is
exactly linear. Moreover
\[
H_t=H_0+O(|t|^2)
\]
by \prettyref{lem:variation-conf},
so
\[
t\Psi^{*_{H_t}}=t\Psi^{*_{H_0}}+O(|t|^3).
\]
There is no $t^2$ contribution.  Hence
\begin{equation}\label{eq:Cqzero}
C_q=0.
\end{equation}

\subsubsection*{The \texorpdfstring{$\mathds{1}$}{1}-term}

Since the harmonic metric $H_t=\operatorname{diag}(h_t,1,h_t^{-1})$ with respect to the decomposition $\EE=\KK_X^{1/2}\oplus\mathcal{O}_X\oplus\KK_X^{-1/2}$, a direct computation shows that
\[
C_{\mathds{1}}(\mathbf{e})=\mathbf{e}\cdot\left.\dfrac{\dd^2}{\dd t^2}\right|_{t=0}\begin{pmatrix}
0&0&\mathrm{e}^{2u_t}\\0&0&0\\1&0&0
\end{pmatrix}=\mathbf{e}\cdot\begin{pmatrix}
0&0&2\ddot{u}_0\\0&0&0\\0&0&0
\end{pmatrix}.
\]
Therefore, 
\begin{equation}\label{eq:Ctauoriginal}
C_{\mathds{1}}(\mathbf{u})=\mathbf{u}\cdot
\begin{pmatrix}
-\ddot{u}_0&0&\ddot{u}_0\\
0&0&0\\
-\ddot{u}_0&0&\ddot{u}_0
\end{pmatrix}.
\end{equation}
Consequently, by \prettyref{eq:Cchern-diag}, \prettyref{eq:Cqzero} and \prettyref{eq:Ctauoriginal}, we obtain that
\begin{equation}\label{eq:Cdiagexact}
C(\mathbf{u})=\mathbf{u}\cdot\begin{pmatrix}
    -\ddot{u}_0&0&\ddot{u}_0-\alpha\\
0&0&0\\
-\ddot{u}_0-\alpha&0&\ddot{u}_0
\end{pmatrix}.
\end{equation}

\subsection{Labourie--Wentworth's formula}

To compute the variations of the eigenvalues, we will use the following formula proven by Labourie--Wentworth in \cite[Lemma 4.1.1]{labourie2018variations}.

\begin{lemma}\label{lem:LW}
    Consider a connection $\nabla$ and a closed curve $\gamma$ in $S$ so that the holonomy of $\nabla$ along $\gamma$ has an eigenvalue of multiplicity $1$. We denote by $L_\gamma$ the corresponding eigenline along $\gamma$, by $H_\gamma$ the supplementary hyperplane stable by the holonomy, and by $P_\gamma$ the projection on $L_\gamma$ along $H_\gamma$. 
    
    Let $\nabla_t$ be a smooth one parameter family of connections with $\nabla_0=\nabla$. Then there exists (for $t$ small enough) a unique smooth function $\lambda_\gamma(t)$ such that 
        \begin{itemize}
            \item $\lambda_\gamma(0)=\lambda_\gamma$;
            \item $\lambda_\gamma(t)$ is an eigenvalue of the holonomy of $\nabla_t$ of multiplicity $1$.
        \end{itemize}
 Moreover,
    \[\dot{\lambda}_\gamma(0)=\left.\dfrac{\dd}{\dd t}\right|_{t=0}\lambda_\gamma(t)=-\lambda_\gamma\cdot\int_{0}^{\ell(\gamma)}\operatorname{tr}(\dot{\nabla}\cdot P_\gamma)\dd s,\]
    where $\ell(\gamma)$ denotes the length of $\gamma$ and $s$ denotes the arc parameter.
\end{lemma}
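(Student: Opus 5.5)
The plan is to reduce the statement to two standard facts: first-order perturbation of a simple eigenvalue, and the Duhamel formula for the variation of a parallel transport. We then check that the two combine into the stated integral.

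\emph{Setup and smoothness.} Pull everything back along the closed curve $\gamma\colon[0,\ell(\gamma)]\to S$ with base point $x_0=\gamma(0)$. Write $\nabla_t=\nabla+\dot\nabla\, t+O(t^2)$ with $\dot\nabla\in\Omega^1(\End\EE)$. Let $\Gamma_t(s)\colon \EE_{x_0}\to\EE_{\gamma(s)}$ denote $\nabla_t$-parallel transport, so the holonomy is $M_t=\Gamma_t(\ell(\gamma))$, an endomorphism of $\EE_{x_0}$. Parallel transport solves a linear ODE whose coefficients depend smoothly on $t$, so $M_t$ is smooth in $t$. Since $\lambda_\gamma$ is a simple root of $\det(M_0-\lambda)$, the implicit function theorem applied to $(t,\lambda)\mapsto\det(M_t-\lambda)$ gives a unique smooth $\lambda_\gamma(t)$ near $\lambda_\gamma$. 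This root is simple for small $t$, and is real if $M_t$ is real because complex roots come in conjugate pairs. This proves the first two bullets.

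\emph{Eigenvalue derivative.} Let $v_t$ be a smooth right eigenvector and $w_t$ a smooth left eigenvector, e.g. obtained via the Riesz projection $\frac{1}{2\pi\iu}\oint (z-M_t)^{-1}\dd z$ around $\lambda_\gamma$. Differentiating $w_tM_tv_t=\lambda_\gamma(t)\,w_tv_t$ at $t=0$, and using $w_0M_0=\lambda_\gamma w_0$ and $M_0v_0=\lambda_\gamma v_0$, gives
\[\dot\lambda_\gamma(0)=\frac{w_0\dot M_0v_0}{w_0v_0}=\operatorname{tr}(\dot M_0P),\]
where $P=v_0w_0/(w_0v_0)$ is exactly the projection onto $L_\gamma$ along the invariant hyperplane $H_\gamma=\ker w_0$ at $x_0$.

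\emph{Variation of holonomy (Duhamel).} Differentiating the parallel transport equation $\nabla_{t,\gamma'}\Gamma_t=0$ in $t$ gives $\nabla_{\gamma'}\dot\Gamma_0=-\dot\nabla(\gamma')\Gamma_0$. Writing $\dot\Gamma_0=\Gamma_0 Y$ with $Y(0)=0$ yields $Y'=-\Gamma_0^{-1}\dot\nabla(\gamma')\Gamma_0$, and therefore
\[\dot M_0=-M_0\int_0^{\ell(\gamma)}\Gamma_0(s)^{-1}\dot\nabla(\gamma'(s))\Gamma_0(s)\,\dd s.\]
Since $P$ commutes with $M_0$ and $PM_0=\lambda_\gamma P$, cyclicity of trace gives
\[\dot\lambda_\gamma(0)=-\lambda_\gamma\int_0^{\ell(\gamma)}\operatorname{tr}\bigl(\dot\nabla(\gamma')\,\Gamma_0(s)P\Gamma_0(s)^{-1}\bigr)\dd s.\]
The operator $P_\gamma(s):=\Gamma_0(s)P\Gamma_0(s)^{-1}$ is precisely the $\nabla$-parallel extension of the projection onto $L_\gamma$ along $H_\gamma$. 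This is the $P_\gamma$ of the statement, so we obtain the claimed formula.

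\emph{Main obstacle.} The only delicate point is bookkeeping of conventions, not analysis. One must fix whether holonomy is the transport $\Gamma(\ell)$ or its inverse, and how $\dot\nabla$ enters (as $\dd+A\,\dd s$ versus $\dd-A\,\dd s$). Both choices flip the sign, and they must be matched to the convention used later, where $c^*\mathrm{D}_t=\dd+A_t\dd s$. I would verify the sign on the test case $A_t=A_0+t\,\mathrm{Id}$ with the paper's convention. In that case the formula must return $\dot\lambda_\gamma(0)=-\lambda_\gamma\ell(\gamma)$, consistent with transport $\exp(-\int A)$. If the convention were reversed, I would adjust the definition of holonomy accordingly.
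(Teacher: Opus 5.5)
The paper gives no proof of this lemma; it imports it verbatim from Labourie--Wentworth (their Lemma 4.1.1). Your argument is correct and self-contained, and it is the standard proof behind that lemma. It has three ingredients: the simple-eigenvalue formula $\dot\lambda_\gamma(0)=\operatorname{tr}(\dot M_0P)$, the Duhamel formula $\dot M_0=-M_0\int_0^{\ell(\gamma)}\Gamma_0(s)^{-1}\dot\nabla(\gamma'(s))\Gamma_0(s)\,\dd s$, and the identity $PM_0=\lambda_\gamma P$ combined with cyclicity of the trace. After these, $\Gamma_0 P\Gamma_0^{-1}$ is recognized as the parallel projector $P_\gamma$, which is well defined on the closed curve because $P$ commutes with $M_0$.

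What your version adds over the bare citation is an explicit sign convention: holonomy is forward transport $\Gamma(\ell)$, with $\gamma^*\nabla=\dd+A\,\dd s$. This is exactly how the paper uses the lemma later. There the flat section $\mathrm{e}^{s}u_+$ carries the eigenvalue of modulus $\mathrm{e}^{T_\gamma}$, and the sign of $\ddot a_1(0,\gamma)$ depends on this convention. So the hedging in your final paragraph is unnecessary, since your test case $A_t=A_0+t\,\mathrm{Id}$ already confirms the match.
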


In our case, recall that \prettyref{eq:A0} yields
\[(c^*\mathrm{D}_0)(\mathbf{u})=\mathbf{u}\cdot\operatorname{diag}(-1,0,1).\]
Hence a basis of the space of $\mathrm{D}_0$-flat sections along $c$ is given by $(\mathrm{e}^{s}u_+, u_0, \mathrm{e}^{-s}u_-)$. They are eigenvectors of the holonomy with eigenvalue modulus $\mathrm{e}^{T_\gamma},1,\mathrm{e}^{-{T_\gamma}}$ respectively, where $T_\gamma=\ell_X(\gamma)/2$. For small $t$, let
\[
M_t=\operatorname{Hol}(\mathrm{D}_t)(c):\EE_{c(0)}\to \EE_{c(0)}
\]
be the holonomy around the closed geodesic $c$. Its three eigenvalues are simple by \prettyref{lem:LW} (This is also known for arbitrary $t\in\mathbb{R}$ by \cite[Theorem 1.1]{bronstein2025anosov}).
At the initial point choose the spectral projection
\[
P_i(t,0)\colon\EE_{c(0)}\to \EE_{c(0)}
\]
onto the $i$-th eigenline along the direct sum of the other two eigenspaces.
If $U_t(s)$ denotes $\mathrm{D}_t$-parallel transport from $0$ to $s$, define
\[
P_i(t,s)=U_t(s)P_i(t,0)U_t(s)^{-1}.
\]
Because $P_i(t,0)$ is a spectral projection of $M_t$, it commutes with
$M_t$. Hence $P_i(t,s)$ descends to a well-defined section $P_i(t)$ of
$\End(\EE)$ over the closed geodesic $c$. Note that
$P_i(\mathbf{u})=\mathbf{u}\cdot E_{ii}$, where $E_{ii}$ denotes the elementary matrix. Here and below $P_i=P_i(0)$ and $\dot{P}_i =\dot P_i(0)$, $i = 1,2,3$.

\begin{proposition}\label{prop:first-der}
When $t$ is sufficiently small, 
\begin{equation}\label{eq:ourLW}
    \begin{cases}
    \dot a_1(t,\gamma)=-\operatorname{Re}\displaystyle\int_0^{T_\gamma}\operatorname{tr}(\dot{A}_t\cdot P_1(t))\dd s,\\
    \dot a_3(t,\gamma)=-\operatorname{Re}\displaystyle\int_0^{T_\gamma}\operatorname{tr}(\dot{A}_t\cdot P_3(t))\dd s.
    \end{cases}
\end{equation}
 Moreover, $\dot a_1(0,\gamma)=0$ and $\dot a_3(0,\gamma)=0$.
\end{proposition}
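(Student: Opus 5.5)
The plan is to apply \prettyref{lem:LW} to the family of pulled-back connections $c^*\mathrm{D}_{t+\tau}$ as $\tau$ varies, with base point $t$. Then take real parts of logarithmic derivatives.

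First I check the hypotheses at time $t$. For $t$ small, the eigenvalues of $M_t$ are simple. At $t=0$ their moduli are $\mathrm{e}^{T_\gamma},1,\mathrm{e}^{-T_\gamma}$, so they are distinct and real. By continuity they stay simple for small $t$, and \cite[Theorem 1.1]{bronstein2025anosov} gives this for all $t$ anyway. In the trivialization along $c$ the connection is $\dd+A_t\,\dd s$, so the variation of the connection in $\tau$ is $\dot{A}_t\,\dd s$. The projection $P_\gamma$ of \prettyref{lem:LW} is the parallel-transported spectral projection $P_i(t)$ defined above.

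Applying \prettyref{lem:LW} gives
\[
\frac{\dot\lambda_i(t)}{\lambda_i(t)}=-\int_0^{T_\gamma}\operatorname{tr}(\dot A_t P_i(t))\,\dd s .
\]
Here the $g_0$-length of $c$ is $T_\gamma$, and $s$ is the $g_0$-arc parameter. Since $a_i=\log|\lambda_i|=\operatorname{Re}\log\lambda_i$, we get $\dot a_i=\operatorname{Re}(\dot\lambda_i/\lambda_i)$, which is \prettyref{eq:ourLW}. One must make sure the eigenvalue branch $\lambda_i(t)$ stays the $i$-th largest in modulus. This holds because the moduli are separated for small $t$.

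For the vanishing at $t=0$, use the frame $\mathbf{u}$, where $P_1=E_{11}$ and $P_3=E_{33}$ are constant. By \prettyref{eq:Bmatrix}, $\dot A_0=B$ has zero diagonal in $\mathbf{u}$. Hence $\operatorname{tr}(BP_1)=B_{11}=0$ and $\operatorname{tr}(BP_3)=B_{33}=0$ pointwise along $c$, so both integrals vanish.

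The main obstacle is making the identification of $P_\gamma$ with $P_i(0)$ rigorous along the whole closed curve. The paper's $P_i$ is parallel for the flat connection $\mathrm{D}_0$, not for $\nabla_0$. Two points need checking:
\begin{itemize}
\item $E_{ii}$ in the $\mathbf{u}$-frame is $\mathrm{D}_0$-parallel. This holds because $A_0(\mathbf{u})$ is diagonal, so it commutes with $E_{ii}$.
\item The $\mathbf{u}$-frame closes up consistently around $c$.
\end{itemize}
Both follow from \prettyref{prop:parallelism} and \prettyref{eq:A0}: the $\mathrm{D}_0$-flat sections $\mathrm{e}^{\pm s}u_\mp$ and $u_0$ are eigenvectors of the holonomy.
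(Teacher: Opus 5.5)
Your proposal is correct and is the paper's argument: apply \prettyref{lem:LW} at base time $t$, use $\dot a_i=\operatorname{Re}(\dot\lambda_i/\lambda_i)$ to get \eqref{eq:ourLW}, and at $t=0$ use that $B$ has zero diagonal in the frame $\mathbf{u}$ while $P_1=E_{11}$ and $P_3=E_{33}$. Your additional checks spell out what the paper compresses into ``by definition'': simplicity and ordering of the eigenvalues, and the identification of $P_\gamma$ with the $\mathrm{D}_0$-parallel $E_{ii}$, which is unaffected by the sign $\pm1$ with which $u_\pm$ may close up because of the spin structure.
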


\begin{proof}
    Applying \prettyref{lem:LW} to our case, we obtain \prettyref{eq:ourLW} by definition. Then
    \[\begin{aligned}
        \dot a_1(0,\gamma)
        =&-\operatorname{Re}\int_0^{T_\gamma}\operatorname{tr}(B\cdot P_1)\dd s\quad(\text{by \prettyref{eq:ourLW}})\\
        =&-\operatorname{Re}\int_0^{T_\gamma}\operatorname{tr}\left(\begin{pmatrix}
            0&\sqrt{2}\iu Q_2&0\\-\sqrt{2}\iu Q_2&0&\sqrt{2}Q_1\\0&\sqrt{2}Q_1&0\end{pmatrix}\cdot\begin{pmatrix}
            1&0&0\\0&0&0\\0&0&0
        \end{pmatrix}\right)\dd s\quad(\text{by \prettyref{eq:Bmatrix}})\\
        =&-\operatorname{Re}\int_0^{T_\gamma}0\ \dd s=0.
    \end{aligned}\]
    Similarly, we obtain that $\dot a_3(0,\gamma)=0$.
\end{proof}

To compute the second order derivatives, we differentiate \prettyref{eq:ourLW}, then
\begin{lemma}\label{lem:second-der}
    \[\begin{cases}
    \ddot a_1(0,\gamma)=-\operatorname{Re}\displaystyle\int_0^{T_\gamma}
\operatorname{tr}\left(C\cdot P_1+B\cdot\dot P_1\right)\dd s,\\
    \ddot a_3(0,\gamma)=-\operatorname{Re}\displaystyle\int_0^{T_\gamma}
\operatorname{tr}\left(C\cdot P_3+B\cdot\dot P_3\right)\dd s.
    \end{cases}\]
\end{lemma}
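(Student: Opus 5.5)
We differentiate the identity \prettyref{eq:ourLW} of \prettyref{prop:first-der} once more in $t$ and evaluate at $t=0$. The right-hand side of \prettyref{eq:ourLW} is an integral over the fixed interval $[0,T_\gamma]$. Indeed, $c$ is the fixed $g_0$-geodesic and does not depend on $t$; Labourie--Wentworth's formula \prettyref{lem:LW} holds for any smooth family of connections, with the arc parameter of a fixed curve. So we may differentiate under the integral sign, and taking the real part commutes with $\dd/\dd t$.

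\textbf{Step 1 (regularity).} By the corollary to \prettyref{lem:analycityofmetric}, $\mathrm{D}_t$ and hence $A_t$ depend analytically on $t$. This makes $t\mapsto \dot A_t$ smooth, with $\ddot A_t|_{t=0}=C$ and $\dot A_0=B$.

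For the projections, $M_t$ is analytic in $t$ and its eigenvalues are simple near $t=0$ by \prettyref{lem:LW}. The spectral projection $P_i(t,0)$ can be written as a Riesz integral $\frac{1}{2\pi\iu}\oint (z-M_t)^{-1}\dd z$ around the isolated eigenvalue, so it is analytic in $t$. The parallel transport $U_t(s)$ is smooth in $(t,s)$ by smooth dependence of ODE solutions on parameters. Therefore $P_i(t,s)=U_t(s)P_i(t,0)U_t(s)^{-1}$ is jointly smooth in $(t,s)$.

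\textbf{Step 2 (differentiation).} With this regularity, differentiating under the integral sign and applying the product rule gives
\[
\ddot a_i(t,\gamma)=-\operatorname{Re}\int_0^{T_\gamma}\operatorname{tr}\left(\ddot A_t P_i(t)+\dot A_t\dot P_i(t)\right)\dd s .
\]
Setting $t=0$, and using $\ddot A_0=C$, $\dot A_0=B$, $P_i(0)=P_i$ and $\dot P_i(0)=\dot P_i$, yields the two claimed formulas for $i=1,3$.

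\textbf{Main subtlety.} The only delicate point is justifying that $a_i(t,\gamma)=\log|\lambda_i(\rho_t(\gamma))|$ is twice differentiable, and that \prettyref{eq:ourLW} holds for all small $t$, not just at $t=0$. We handle this by applying \prettyref{lem:LW} at each base point $t_0$ near $0$. This is legitimate because simplicity of the eigenvalues persists in a neighbourhood of $0$ (and in fact for all $t$, by \prettyref{thm:BD}). We also note that $\operatorname{Re}$ arises from $\frac{\dd}{\dd t}\log|\lambda|=\operatorname{Re}(\dot\lambda/\lambda)$, which is smooth since $\lambda_i(t)\neq 0$.
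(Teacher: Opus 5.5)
Your proposal is correct and follows the paper's route. The paper obtains this lemma by differentiating \eqref{eq:ourLW} once more in $t$ and applying the product rule at $t=0$. Your regularity arguments (analyticity of $A_t$, Riesz-integral analyticity of the spectral projections, smooth dependence of parallel transport, and applying Labourie--Wentworth at each nearby base point $t_0$) make explicit what the paper leaves implicit.
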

This forces us to compute the first variation of the spectral projections.

\subsection{The first variation of the spectral projections}

The connection $\mathrm{D}_t$ induces a connection on $\End(\mathcal E)$ by
\[
\mathrm{D}_t^{\End}P=\mathrm{D}_t\circ P-P\circ \mathrm{D}_t.
\]
By construction, along the closed curve we have
\[
\mathrm{D}_t^{\End}P_i(t)=0.
\]
Therefore, using the frame expression $c^*\mathrm{D}_t=\dd+A_t\dd s$, we obtain that
\[
P_i'(t)+[A_t,P_i(t)]=0.
\]
Differentiating at $t=0$ gives
\begin{equation}\label{eq:Pvariation}
\dot P_i'+[A_0,\dot P_i]+[B,P_i]=0.
\end{equation}

Also, differentiating $P_i(t)^2=P_i(t)$ with respect to $t$ gives
\begin{equation}\label{eq:idempotentvar}
P_i\dot P_i+\dot P_iP_i=\dot P_i.
\end{equation}
Thus $\dot P_i$ has only off-diagonal blocks between the image of $P_i$ and
its complementary plane.

\begin{lemma}\label{lem:variation-proj}
    There exists complex-valued functions $x,y,v,w$ on $[0,{T_\gamma}]$ such that \[\dot{P}_1(\mathbf{u})=\mathbf{u}\cdot\begin{pmatrix}
        0&x&0\\y&0&0\\0&0&0
    \end{pmatrix},\quad x'-x=y'+y=\sqrt{2}\iu Q_2\] and
    \[\dot{P}_3(\mathbf{u})=\mathbf{u}\cdot\begin{pmatrix}
        0&0&0\\0&0&v\\0&w&0
    \end{pmatrix},\quad -(v'-v)=w'+w=\sqrt{2} Q_1,\]
    where $Q_1,Q_2$ are functions in \prettyref{eq:Bmatrix}.
\end{lemma}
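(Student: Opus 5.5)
The plan is to read off the form of $\dot P_1$ and $\dot P_3$ from the two constraints \eqref{eq:idempotentvar} and \eqref{eq:Pvariation}, working in the frame $\mathbf{u}$. At $t=0$ we have $P_1(\mathbf{u})=\mathbf{u}\cdot E_{11}$ and $P_3(\mathbf{u})=\mathbf{u}\cdot E_{33}$.

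\emph{Step 1 (block structure).} Write $\dot P_1=\mathbf{u}\cdot(p_{ij})$. Then \eqref{eq:idempotentvar} reads $E_{11}\dot P_1+\dot P_1E_{11}=\dot P_1$. Comparing entries gives $p_{11}=0$, and $p_{ij}=0$ for $i,j\in\{2,3\}$. So only the entries $p_{12},p_{13},p_{21},p_{31}$ can be nonzero. In the same way, $\dot P_3$ can only have nonzero entries $p_{31},p_{32},p_{13},p_{23}$.

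\emph{Step 2 (the ODE, entrywise).} Substitute $A_0=\operatorname{diag}(-1,0,1)$ from \eqref{eq:A0} and $B$ from \eqref{eq:Bmatrix} into \eqref{eq:Pvariation}. Writing $a=(-1,0,1)$, we have $[A_0,\dot P]_{ij}=(a_i-a_j)p_{ij}$, and $[B,E_{11}]$ is the first column of $B$ minus the first row of $B$. This gives the following equations for $\dot P_1$:
\[
p_{12}'-p_{12}=\sqrt2\iu Q_2,\qquad p_{21}'+p_{21}=\sqrt2\iu Q_2,\qquad p_{13}'-2p_{13}=0,\qquad p_{31}'+2p_{31}=0.
\]
Setting $x=p_{12}$ and $y=p_{21}$ gives the claimed relations for $\dot P_1$. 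For $\dot P_3$ one uses $[B,E_{33}]$, whose nonzero entries are $\pm\sqrt2 Q_1$ in positions $(2,3)$ and $(3,2)$. This gives
\[
-(v'-v)=w'+w=\sqrt2 Q_1,\qquad p_{13}'-2p_{13}=0,\qquad p_{31}'+2p_{31}=0,
\]
with $v=p_{23}$ and $w=p_{32}$. The signs should be checked carefully, but the computation is mechanical.

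\emph{Step 3 (killing the $(1,3)$ and $(3,1)$ entries).} This is the main obstacle, because it uses that $\dot P_i$ is a genuine section over the \emph{closed} geodesic and not only a solution on $[0,T_\gamma]$. The frame $\mathbf{u}$ is $\nabla_0$-flat and $H_0$-unitary. The holonomy of $\mathrm{D}_0$ along $c$ has the eigenvectors $\mathrm{e}^{s}u_+$, $u_0$, $\mathrm{e}^{-s}u_-$. Since $\nabla_0$ preserves the splitting, the frame $\mathbf{u}$ closes up after one turn up to a diagonal unitary factor: $\mathbf{u}(T_\gamma)=\mathbf{u}(0)\cdot\operatorname{diag}(\sigma_+,\sigma_0,\sigma_-)$ with $|\sigma_\bullet|=1$ (these are signs coming from the spin structure). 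Because $\dot P_i$ is a well-defined section of $\End(\EE)$ over $c$, each entry satisfies $p_{ij}(T_\gamma)=\sigma_i^{-1}\sigma_j\,p_{ij}(0)$, a unimodular multiple of $p_{ij}(0)$.

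Now $p_{13}=p_{13}(0)\mathrm{e}^{2s}$, so $|p_{13}(T_\gamma)|=\mathrm{e}^{2T_\gamma}|p_{13}(0)|$. Since $T_\gamma>0$, this is compatible with the unimodular relation only if $p_{13}\equiv0$. The same argument with $\mathrm{e}^{-2s}$ gives $p_{31}\equiv0$. This yields exactly the stated forms of $\dot P_1$ and $\dot P_3$. The functions $x,y,v,w$ are then determined by the same periodicity, since $\mathrm{e}^{\pm T_\gamma}\neq$ a unimodular factor, but the lemma only needs their existence and the ODEs.
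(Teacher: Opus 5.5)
Your proposal is correct and follows the paper's proof step for step. You get the block structure from \eqref{eq:idempotentvar} and the entrywise ODEs from \eqref{eq:Pvariation}, and all your signs check out. You then kill $p_{13}$ and $p_{31}$ because their $\mathrm{e}^{\pm 2s}$ growth or decay is incompatible with the unimodular closing condition at $s=T_\gamma$.

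You are more explicit than the paper, which just says the frame is $H_0$-unitary, in stating that $\mathbf{u}$ closes up by a \emph{diagonal} unitary factor, which is what makes the moduli preserved entry by entry. One small correction to your justification: diagonality in the frame $\mathbf{u}$, rather than in $\mathbf{e}$, does not follow merely from $\nabla_0$ preserving the splitting. It holds because the $\nabla_0$-holonomy must commute with the well-defined section $A_0=V(c')$ over the closed geodesic, which forces $\sigma_+=\sigma_-$.
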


\begin{proof}
    We only prove the formula for $\dot P_1$. The one for $\dot{P}_3$ is similar.

    By \prettyref{eq:idempotentvar}, 
    \[\dot{P}_1(\mathbf{u})=\mathbf{u}\cdot\begin{pmatrix}
        0&x&p_{13}\\y&0&0\\p_{31}&0&0
    \end{pmatrix}.\]
    Now \prettyref{eq:Pvariation} turns to be
    \[\begin{aligned}
        &\begin{pmatrix}
        0&x'&p_{13}'\\y'&0&0\\p_{31}'&0&0
    \end{pmatrix}+\left[\begin{pmatrix}
        -1&0&0\\0&0&0\\0&0&1
    \end{pmatrix},\begin{pmatrix}
        0&x&p_{13}\\y&0&0\\p_{31}&0&0
    \end{pmatrix}\right]\\
    +&\left[\begin{pmatrix}
    0&\sqrt{2}\iu Q_2&0\\-\sqrt{2}\iu Q_2&0&\sqrt{2}Q_1\\0&\sqrt{2}Q_1&0
\end{pmatrix},\begin{pmatrix}
        1&0&0\\0&0&0\\0&0&0
    \end{pmatrix}\right]=0
    \end{aligned}\]
    by \prettyref{eq:A0} and \prettyref{eq:Bmatrix}. This is equivalent to
    \[\begin{cases}
        x'-x=y'+y=\sqrt{2}\iu Q_2,\\
        p_{13}'-2p_{13}=0,\\
        p_{31}'+2p_{31}=0.
    \end{cases}\]
    
    Since the $\nabla_0$-flat frame $\mathbf{u}$ is $H_0$-unitary, the moduli $|p_{13}({T_\gamma})|=|p_{13}(0)|$, $|p_{31}({T_\gamma})|=|p_{31}(0)|$. Thus the only possible function satisfying that 
    \[p_{13}'-2p_{13}=0,\quad|p_{13}({T_\gamma})|=|p_{13}(0)|\] is zero. Similarly, $p_{31}\equiv0$. 
\end{proof}

\subsection{Proof of \prettyref{thm:variation-flat}}

In this subsection, we prove \prettyref{thm:variation-flat}.

\begin{proof}[Proof of \prettyref{thm:variation-flat}]
    We have proven the vanishing of the first order derivatives in \prettyref{prop:first-der}. By \prettyref{lem:second-der},
    \[\ddot a_1(0,\gamma)=-\operatorname{Re}\int_0^{T_\gamma}
\operatorname{tr}\left(C\cdot P_1+B\cdot\dot P_1\right)\dd s.\]

Firstly,
\[\begin{aligned}
    &-\operatorname{Re}\int_0^{T_\gamma}
\operatorname{tr}\left(C\cdot P_1\right)\dd s\\
=&-\operatorname{Re}\int_0^{T_\gamma}
\operatorname{tr}\left(\begin{pmatrix}
    -\ddot{u}_0&0&\ddot{u}_0-\alpha\\
0&0&0\\
-\ddot{u}_0-\alpha&0&\ddot{u}_0
\end{pmatrix}\cdot\begin{pmatrix}
            1&0&0\\0&0&0\\0&0&0
        \end{pmatrix}\right)\dd s\quad(\text{by \prettyref{eq:Cdiagexact}})\\
        =&\int_{0}^{T_\gamma}\ddot{u}_0\dd s.
\end{aligned}\]
Then
\[\begin{aligned}
    &-\operatorname{Re}\int_0^{T_\gamma}
\operatorname{tr}\left(B\cdot\dot P_1\right)\dd s\\
=&-\operatorname{Re}\int_0^{T_\gamma}
\operatorname{tr}\left(\begin{pmatrix}
    0&\sqrt{2}\iu Q_2&0\\-\sqrt{2}\iu Q_2&0&\sqrt{2}Q_1\\0&\sqrt{2}Q_1&0
\end{pmatrix}\cdot\begin{pmatrix}
            0&x&0\\y&0&0\\0&0&0
        \end{pmatrix}\right)\dd s\quad(\text{by \prettyref{eq:Bmatrix} and \prettyref{lem:variation-proj}})\\
        =&-\operatorname{Re}\int_0^{T_\gamma}\sqrt{2}\iu Q_2(y-x)\dd s\\
        =&\operatorname{Re}\int_0^{T_\gamma}\left(y\cdot\overline{y'+y}-x\cdot\overline{x'-x}\right)\dd s\quad(\text{by \prettyref{lem:variation-proj}})\\
        =&\int_0^{T_\gamma}\left(|y|^2+|x|^2\right)\dd s+\operatorname{Re}\int_0^{T_\gamma}\left(y\cdot\overline{y'}-x\cdot\overline{x'}\right)\dd s\\
        \geqslant&\operatorname{Re}\int_0^{T_\gamma}\left(y\cdot\overline{y'}-x\cdot\overline{x'}\right)\dd s.
\end{aligned}\]
       Note that
       \[
        \operatorname{Re}\int_0^{T_\gamma}y\cdot\overline{y'}\dd s=\dfrac{1}{2}\cdot\int_0^{T_\gamma}\left(y\cdot\overline{y'}+y'\cdot\overline{y}\right)\dd s=\dfrac{1}{2}\cdot\int_0^{T_\gamma}\dfrac{\dd|y|^2}{\dd s}\dd s=\dfrac{1}{2}\cdot \left(|y({T_\gamma})|^2-|y(0)|^2\right)=0,\]
        where the last equality follows from the fact that the frame $\mathbf{u}$ is $H_0$-unitary. Similarly, we can show that
        \[\operatorname{Re}\int_0^{T_\gamma}\left(y\cdot\overline{y'}-x\cdot\overline{x'}\right)\dd s=0.\]

        Combining all inequalities above, we obtain that
        \[\ddot a_1(0,\gamma)\geqslant\int_0^{T_\gamma}\ddot{u}_0\ \dd s.\] 
        By \prettyref{lem:variation-conf} and ${T_\gamma}=\ell_X(\gamma)/2$, and similar computation on $\ddot a_3(0,\gamma)$, we obtain the desired result.
\end{proof}

\section{Proof of \prettyref{thm:Hilbert domination}}
In this section, we prove \prettyref{thm:Hilbert domination} using \prettyref{thm:variation-flat}. To be more precise, in the remainder of this paper we adopt the following conventions for deformation parameters:
\begin{enumerate}
    \item The symbol $t\in \mathbb{R}$ is only used to parameterize the family of representations $\rho_{t}:=\operatorname{NAH}(\EE,\Phi_{tq})$ defined above and its derived objects.
    \item The symbol $u\in \mathbb{R}^k$ is only used to parameterize general analytic families.
    \item The symbol $\nu\in \mathbb{R}$ is only used in \prettyref{rem: dimension>3/2} to parameterize the deformation there and its derived objects.
\end{enumerate}

Let $T^1S$ be the unit tangent bundle of $S$, equipped with the geodesic flow $\varphi^\tau, \tau \in \mathbb{R}$, and let $\iota:T^1S\to T^1S$ be the flip map $(x,v)\mapsto(x,-v)$. For each nontrivial $\gamma\in\pi_1(S)$, let $\ell_\gamma\subset T^1S$ be the corresponding periodic orbit, oriented so that its lift to the covering $T^1\mathbb{H}$ points from the repelling fixed point $\gamma^-$ to the attracting fixed point $\gamma^+$. For a continuous function $f$ on $T^1 S$, the integral $\int_{\ell_\gamma}f\dd \tau$ is taken with the multiplicity of $\gamma$. With this convention, $\ell_{\gamma^{-1}}=\iota(\ell_\gamma)$. Note that for any non-trival $\gamma\in \pi_1(S)$, $\int_{\ell_{\gamma}} 1 \dd \tau = \ell_X(\gamma)$. For each $\alpha>0$, let $\mathcal{C}^{\alpha}(T^1S)$ be the Banach space of $\alpha$-H\"{o}lder functions on $T^1S$.

We recall a theorem from the thermodynamic formalism, which shows that logarithms of eigenvalue moduli can be realized as integrals over periodic orbits. Let $D\subset \mathbb{R}^k$ be an open set containing $0$, and let $\rho_u: \pi_1(S)\to \mathrm{SL}_3 \mathbb{R},\ u\in D$ be a family of Anosov representations varying analytically in $\mathrm{Hom}(\pi_1(S), \mathrm{SL}_3 \mathbb{R})$. 

\begin{theorem}[{Bridgeman--Canary--Labourie--Sambarino~\cite[Proposition~6.2]{BCLS}}]\label{thm:bcls-variation}
There exist an open neighbourhood $U\subset D$ of $0$, $\alpha>0$, and a family of positive $\alpha$-H\"{o}lder functions $\{f_u\}_{u\in U}$ on $T^1S$ such that for any nontrivial $\gamma\in \pi_1(S)$,
\begin{equation}\label{eq: bclsintegralformula}
\log |\lambda_1(\rho_u(\gamma))| = \int_{\ell_{\gamma}} f_u \dd \tau, \quad \log |\lambda_3(\rho_u(\gamma))| = -\int_{\ell_{\gamma}} (f_u\circ\iota) \dd \tau.    
\end{equation}

Moreover, the map $u\mapsto f_u$ is real analytic from $U$ to $\mathcal{C}^{\alpha}(T^1S)$.
\end{theorem}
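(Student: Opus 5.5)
The plan is to realise $\log|\lambda_1|$ as the periods of an infinitesimal growth rate along an invariant line subbundle of the flat bundle $\mathsf F_u:=\mathsf F_{X,\rho_u}$ over $T^1S$. We then get the $\lambda_3$ formula by symmetry and positivity by averaging along the flow.

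\emph{Step 1: analytic limit maps.} Since each $\rho_u$ is Anosov, there is a $\rho_u$-equivariant Hölder map $\xi_u\colon\partial\pi_1(S)\to\mathbb{RP}^2$ sending $\gamma^+$ to the attracting eigenline of $\rho_u(\gamma)$. Lifted to $T^1\mathbb H^2$ by $v\mapsto\xi_u(v^+)$, this gives a $\varphi^\tau$-invariant line subbundle $L_u\subset\mathsf F_u$ which is dominated by a complementary invariant plane bundle. After trivialising the family $\mathsf F_u$ smoothly in $u$, I would characterise $L_u$ as the fixed point of the graph transform acting on Hölder sections of the projectivised bundle; this is a contraction by domination. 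Applying the analytic implicit function theorem on the Banach space of $\alpha$-Hölder sections (for some small $\alpha$) then shows that $u\mapsto L_u$ is real analytic near $0$. This is the stability of dominated splittings, essentially Hirsch--Pugh--Shub. I expect this to be the main obstacle: the graph transform must be shown analytic as a map between Hölder spaces, and the Hölder exponent must be lowered uniformly for $u$ in a neighbourhood.

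\emph{Step 2: the potential.} Fix a smooth metric $\|\cdot\|$ on $\mathsf F_u$, depending analytically on $u$ through the trivialisation. For a nonzero $w\in (L_u)_v$, set
\[
\tilde f_u(v):=\left.\frac{\dd}{\dd\tau}\right|_{\tau=0}\log\|\varphi^\tau_{X,\rho_u}w\|.
\]
This is independent of $w$. It is Hölder in $v$ and analytic in $u$ because $L_u$ is. Lift the periodic orbit $\ell_\gamma$ to the flat bundle, starting at a point whose fibre is the attracting eigenline. Integrating over $\ell_\gamma$ gives
\[
\int_{\ell_\gamma}\tilde f_u\,\dd\tau=\log\frac{\|\rho_u(\gamma)w\|}{\|w\|}=\log|\lambda_1(\rho_u(\gamma))|,
\]
since the multiplicity convention matches powers of $\gamma$.

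\emph{Step 3: the $\lambda_3$ formula.} We have $\lambda_3(\rho_u(\gamma))=\lambda_1(\rho_u(\gamma^{-1}))^{-1}$ and $\ell_{\gamma^{-1}}=\iota(\ell_\gamma)$. Moreover $\iota$ conjugates $\varphi^\tau$ to $\varphi^{-\tau}$ and preserves the length measure on orbits. Hence
\[
\log|\lambda_3(\rho_u(\gamma))|=-\int_{\ell_{\gamma^{-1}}}\tilde f_u\,\dd\tau=-\int_{\ell_\gamma}\tilde f_u\circ\iota\,\dd\tau.
\]
So it suffices to arrange positivity while keeping all periods unchanged.

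\emph{Step 4: positivity.} By domination, the growth rate on $L_u$ strictly exceeds the normalised growth rate $\tfrac12\,\dd\log\det$ on the complementary plane. Since $\det\rho_u=1$, the sum $\log|\lambda_1|+\log\det$ on the complement vanishes. Together with uniform contraction constants over a compact neighbourhood of $0$, this gives $T>0$ and $c>0$ with
\[
\log\|\varphi^T w\|-\log\|w\|\geqslant cT\qquad\text{for all }w\in L_u,\ u\in U.
\]
I would then replace $\tilde f_u$ by the flow average $f_u(v):=\frac1T\int_0^T\tilde f_u(\varphi^\tau v)\,\dd\tau$. This is Livšic cohomologous to $\tilde f_u$, so it has the same periods, and both formulas in \eqref{eq: bclsintegralformula} are preserved. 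It satisfies $f_u\geqslant c>0$, and $u\mapsto f_u$ stays real analytic into $\mathcal C^\alpha(T^1S)$. Shrinking $U$ if necessary completes the argument.
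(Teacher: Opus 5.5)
Your Step 3 is exactly the paper's argument for the $\lambda_3$ formula, given in the remark following the statement: apply the $\lambda_1$ formula to $\gamma^{-1}$ and use $\ell_{\gamma^{-1}}=\iota(\ell_\gamma)$ together with $|\lambda_1(g^{-1})|=|\lambda_3(g)|^{-1}$. For the $\lambda_1$ formula, positivity and analyticity, the paper gives no argument and simply cites \cite[Proposition~6.2]{BCLS}. Your Steps 1, 2 and 4 outline the argument behind that result: analytic dependence of the limit map via stability of the dominated splitting, a potential built from a metric on the invariant line bundle, and a positive Liv\v{s}ic-cohomologous representative. So your route is the same as the paper's. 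As written, however, Steps 2 and 4 have the sign backwards for the conventions you adopt.

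With $\mathsf F_{X,\rho}$ defined by $\gamma\cdot(v,w)=(j_X(\gamma)v,\rho(\gamma)w)$, and $v$ on the axis of $\gamma$ pointing towards $\gamma^+$, one has
\[
\varphi^{\ell_X(\gamma)}_{X,\rho_u}[v,w]=[j_X(\gamma)v,w]=[v,\rho_u(\gamma)^{-1}w].
\]
So the return map along $\ell_\gamma$ is $\rho_u(\gamma)^{-1}$, not $\rho_u(\gamma)$. Consequently, for $w\in\xi_u(\gamma^+)$ your $\tilde f_u$ has period $-\log|\lambda_1(\rho_u(\gamma))|$. Equivalently, $L_u(v)=\xi_u(v^+)$ is the most \emph{contracted} direction of the forward flow. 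For $\rho_0=j_X\oplus\mathbf{1}$, its periods are $-\tfrac12\ell_X(\gamma)$, while the complementary plane has exponents $\tfrac12$ and $0$. Your phrasing in Step 1 (the line is dominated by the plane) is the correct one; Step 4 then asserts the reverse inequality, which is false.

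The repair is to set $\tilde f_u(v):=-\left.\frac{\dd}{\dd\tau}\right|_{\tau=0}\log\|\varphi^\tau w\|$. Domination, $\det\rho_u=1$, and the uniform transversality of $L_u$ and the plane bundle then give $\log\|\varphi^T w\|-\log\|w\|\leqslant -cT$ uniformly for $u$ near $0$. The flow average of the corrected $\tilde f_u$ is then $\geqslant c$ and has periods $\log|\lambda_1(\rho_u(\gamma))|$, and Step 3 goes through verbatim.

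A smaller point: the analytic implicit function theorem in Step 1 needs the identification of the bundles $\mathsf F_u$ (equivalently, the $\rho_u$-equivariant metric) to depend analytically on $u$, not merely smoothly. You can arrange this by working equivariantly on $T^1\mathbb H^2\times\mathbb R^3$ with $\|w\|_v^2:=\sum_{\gamma}\psi(j_X(\gamma)^{-1}v)\,\|\rho_u(\gamma)^{-1}w\|_0^2$, where $\psi$ is a compactly supported partition of unity.
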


\begin{remark}
In the original setting of \cite{BCLS}, it was established that for any $1$-Anosov representation $\rho:\pi_1(S)\to \mathrm{SL}_n\mathbb{R}$, one can find a positive H\"{o}lder function $f$ on $T^1S$, depending analytically on $\rho$, such that for any nontrivial $\gamma\in \pi_1(S)$,
\[
\log|\lambda_1(\rho(\gamma))| = \int_{\ell_\gamma} f\dd\tau.
\]
The desired formulation then follows by combining
\[
\int_{\ell_{\gamma}} f \dd \tau  = \int _{\ell_{\gamma^{-1}}} (f\circ \iota) \dd \tau
\]
with the identity $\left|\lambda_1(g^{-1})\right| = \left|\lambda_3(g)\right|^{-1}$ for any $g\in \mathrm{SL}_3\mathbb{R}$.
\end{remark}

\begin{proof}[Proof of \prettyref{thm:Hilbert domination}]
Again from the identity $\left|\lambda_1(g^{-1})\right| = \left|\lambda_3(g)\right|^{-1}$ for any $g\in \mathrm{SL}_3\mathbb{R}$, it suffices to prove the inequality for $\log \left |\lambda_1(\rho_t(\gamma))\right|$.

By \prettyref{thm:bcls-variation}, there exist $\varepsilon'>0$, $\alpha>0$, and an analytic family of positive functions $\{f_t\}_{t\in (-\varepsilon', \varepsilon')}\subset \mathcal{C}^{\alpha}(T^1S)$ such that for any $t\in (-\varepsilon',\varepsilon')$, \prettyref{eq: bclsintegralformula} holds. So there exist $\dot f, \ddot f\in \mathcal{C}^{\alpha}(T^1S)$ such that
\[
f_t= f_0+t\dot f+\frac{t^2}{2} \ddot f+ R_t,
\]
where the remainder $R_t \in \mathcal{C}^{\alpha}(T^1S)$ satisfies $\lim_{t\to 0} R_t/t^2 = 0$. Here the limit is taken in the topology of the Banach space $\mathcal{C}^{\alpha}(T^1S)$.

Following the notations introduced in \prettyref{sec: variations of eigengap}, for each nontrivial $\gamma\in \pi_1(S)$ and $t\in (-\varepsilon',\varepsilon')$, we have
\[
a_1(t,\gamma) = \int_{\ell_\gamma} f_0 \dd\tau+ \left(\int_{\ell_{\gamma}} \dot f\dd\tau \right) t+\left(\int_{\ell_{\gamma}} \ddot f\dd\tau\right)\frac{t^2}{2}+\int_{\ell_{\gamma}} R_t\dd\tau.
\]
Since $R_t/t^2$ converges to $0$ in the Banach space $\mathcal{C}^{\alpha}(T^1 S)$, it follows that
\[
\dot a_{1}(0,\gamma)  = \int_{\ell_{\gamma}} \dot f\dd\tau, \quad \ddot a_{1}(0,\gamma) = \int_{\ell_{\gamma}} \ddot f\dd\tau.
\]
\prettyref{thm:variation-flat} thus implies that for each nontrivial $\gamma\in \pi_1(S)$,
\begin{equation}\label{eq: potentialvarint}
\int_{\ell_{\gamma}} \dot f\dd\tau = 0,\quad \int_{\ell_{\gamma}} \ddot f\dd\tau\geqslant  \frac{m}{2}\cdot \ell_X(\gamma),     \end{equation}
where $m>0$ is a constant depending only on $q$.

Note that $a_{1}(0,\gamma) = \ell_{X}(\gamma)/2$. As a result, we have
\[
a_1(t,\gamma) \geqslant  \frac{1}{2}\ell_X(\gamma)+ \frac{m}{4} t^2 \ell_X(\gamma)+\int_{\ell_{\gamma}} R_t\dd\tau.
\]
Since $R_t/t^2 \to 0$ in $\mathcal{C}^{\alpha}(T^1 S)$, it also converges to $0$ in the $L^{\infty}$ norm. Thus, there exists $0<\varepsilon_0<\varepsilon'$ such that for all $t\in (-\varepsilon_0,\varepsilon_0)$, we have $\|R_t\|_{L^\infty} \leqslant mt^2/8$. This yields
\begin{equation}\label{eq: remainderint}
\left|\int_{\ell_{\gamma}} R_t \dd\tau\right|  \leqslant \frac{m}{8} t^2 \ell_X(\gamma), 
\end{equation}
which implies
\[
a_1(t,\gamma)\geqslant  \left(\frac{1}2+\frac{m}{8} t^2 \right)\ell_X(\gamma).
\]
Thus $m_0 = m/8$ is the desired constant depending only on $q$.
\end{proof}

\section{Applications}

In this section, we prove \prettyref{coro: 6dimnonhitchin}, \prettyref{coro:lyapunov exponent}, and \prettyref{coro: affinity exponent} as some applications of \prettyref{thm:Hilbert domination}.

\subsection{Non-Hitchin Borel Anosov representations}

\begin{proof}[Proof of \prettyref{coro: 6dimnonhitchin}]
    We first prove that $\rho_t\otimes \operatorname{Sym}^{k-1}(j_X)$ is Borel Anosov when $|t|$ is sufficiently small and nonzero. Keep using the notations in the proof of Theorem~\ref{thm:Hilbert domination}. For $t\in(-\varepsilon_0, \varepsilon_0)$, define the analytic families
    \begin{equation}\label{eq: formulasofgh}
    r_{1,t}: = 2 f_t - f_t\circ \iota ,\quad r_{2,t}: =2 f_t \circ \iota -f_t.        
    \end{equation}
    
    Since for any $g \in \mathrm{SL}_3\mathbb{R}$, $\log |\lambda_1 (g)|+\log|\lambda_2(g)|+\log |\lambda_3(g)| = 0$, we have for any nontrivial $\gamma \in \pi_1(S)$,
    \begin{equation}\label{eq: rootpotential}
    \log \frac{|\lambda_1(\rho_t(\gamma))|}{|\lambda_2(\rho_t(\gamma))|} = \int_{\ell_{\gamma}} r_{1,t} \dd\tau, \quad \log \frac{|\lambda_2(\rho_t(\gamma))|}{|\lambda_3(\rho_t(\gamma))|} = \int_{\ell_{\gamma}} r_{2,t} \dd\tau.
    \end{equation}
    Note that for any nontrivial $\gamma \in \pi_1(S)$,
    \[
    \int_{\ell_{\gamma}} r_{1,0} \dd \tau  = \int_{\ell_{\gamma}} r_{2,0} \dd \tau = \frac{1}{2}\ell_{X}(\gamma).
    \]
    Since $r_{1,t}$ and $r_{2,t}$ vary analytically in $\mathcal{C}^{\alpha}(T^1S)$, the differences $r_{1,t}-r_{1,0}$ and $r_{2,t}-r_{2,0}$ converge uniformly to $0$ as $t\to 0$. Thus, we can choose $0 < \varepsilon_1 < \min \left(\varepsilon_0, 1/(2\sqrt{2m_0})\right)$ such that for all $t\in(-\varepsilon_1, \varepsilon_1)$ and nontrivial $\gamma\in \pi_1(S)$,
    \begin{equation}\label{eq:smallsingularvaluevariation}
        \left|\int_{\ell_{\gamma}} r_{1,t} \dd \tau -\frac{1}{2}\ell_{X}(\gamma) \right| \leqslant \frac{1}{4} \ell_X(\gamma) \quad \text{and} \quad \left|\int_{\ell_{\gamma}} r_{2,t} \dd \tau -\frac{1}{2}\ell_{X}(\gamma) \right| \leqslant \frac{1}{4} \ell_X(\gamma).
    \end{equation}
    
    We first show that for any $t\in \mathbb{R}, |t|\in (0, \varepsilon_1)$, the representation $\left(\rho_t\otimes\operatorname{Sym}^{k-1}(j_X)\right)$ is Borel Anosov. Observe that for every $\gamma\in \pi_1(S)$, the eigenvalues of the tensor product representation $\left(\rho_t\otimes\operatorname{Sym}^{k-1}(j_X)\right)(\gamma)$ are given by
    \[
    \lambda_{i}(\rho_t(\gamma)) \cdot \lambda^{k-2j+1}(j_X(\gamma)),\quad i \in \{1,2,3\},\quad j \in \{1,\dots, k\}.
    \]
   Since $\rho_t$ is Borel Anosov, for any nontrivial element $\gamma\in \pi_1(S)$, its image $\rho_t(\gamma)$ is loxodromic (see Labourie~\cite[Proposition~3.4]{labourie2006anosov}). So all above eigenvalues are real. Consequently, the logarithms of all possible eigenvalue modulus gaps take the form
    \[
    \left| \log \left|\lambda_{i_1}(\rho_t(\gamma))\right|- \log \left|\lambda_{i_2}(\rho_t(\gamma))\right| -l \ell_X(\gamma) \right|,\quad  i_1,i_2\in \{1,2,3\},
    \]
    where $l \in \mathbb{Z}$ satisfies $1-k \leqslant l \leqslant k-1$, and $l\neq 0$ whenever $i_1 = i_2$. By \prettyref{defn:eigenvalue-gap}, it suffices to prove that for any $\gamma\in \pi_1(S)$,
    \begin{equation}\label{eq:singularvaluegap}
         \left| \log \left|\lambda_{i_1}(\rho_t(\gamma))\right|- \log \left|\lambda_{i_2}(\rho_t(\gamma))\right| -l \ell_X(\gamma) \right|\geqslant 2m_0t^2 \cdot \ell_X(\gamma).
    \end{equation}
    We discuss by three cases:
    \begin{enumerate}
        \item If $i_1 = i_2$, then the left-hand side of \prettyref{eq:singularvaluegap} is at least $\ell_X(\gamma)$, which is strictly greater than the right-hand side since $m_0 t^2<\frac{1}{8}$.
        \item If $\{i_1,i_2\} = \{1,2\}$ (the case for $\{i_1,i_2\} = \{2,3\}$ is identical by symmetry), then the left-hand side is at least $\ell_X(\gamma)/4$ by \prettyref{eq: rootpotential} and \prettyref{eq:smallsingularvaluevariation}, which is again strictly greater than the right-hand side since $2m_0 t^2<\frac{1}{4}$.
        \item If $\{i_1,i_2 \}=\{1,3\}$, we may assume without loss of generality that $i_1 = 1$ and $i_2 = 3$. We then consider two further subcases:
        \begin{itemize}
            \item If $l = 1$, then \prettyref{eq:singularvaluegap} follows directly from \prettyref{thm:Hilbert domination}.
            \item If $l \neq 1$, then \prettyref{eq:smallsingularvaluevariation}, together with the identity \[\log \left|\lambda_{1}(\rho_t(\gamma))\right|- \log \left|\lambda_{3}(\rho_t(\gamma))\right| = \int_{\ell_{\gamma}} \left(r_{1,t} + r_{2,t}\right) \dd \tau,\] imply that
            \[
            \frac{1}{2} \ell_X(\gamma)\leqslant\log \left|\lambda_{1}(\rho_t(\gamma))\right|- \log \left| \lambda_{3}(\rho_t(\gamma))\right| \leqslant \frac{3}{2}\ell_X(\gamma).            
            \]
            Since $l$ is an integer and $l \neq 1$, the left-hand side of \prettyref{eq:singularvaluegap} is at least $\frac{1}{2}\ell_X(\gamma)$, and is thus strictly greater than the right-hand side.
        \end{itemize}
    \end{enumerate}
    This concludes the proof of Borel Anosovness.

    Next, we show that the representation $\rho_t\otimes\operatorname{Sym}^{k-1}(j_X)$ is not Hitchin. Suppose it is for the sake of contradiction. Since Hitchin representations form a connected component, any continuous deformation of it would remain Hitchin, and hence Borel Anosov. However, $\rho_t\otimes\operatorname{Sym}^{k-1}(j_X)$ continuously deforms to $\rho_0\otimes\operatorname{Sym}^{k-1}(j_X)$, which is not Borel Anosov (see \prettyref{rem:non-borel-anosov}). Thus we get a contradiction and conclude the proof.

    Finally, note that for any $t\in \mathbb R$, $\rho_t$ does not belong to the Hitchin component, hence $\rho_t(\pi_1(S))$ is not contained in an irreducible $\mathrm{SL}_2 \mathbb R$. The irreduciblity of $\rho_t\otimes \operatorname{Sym}^{k-1} (j_X)$ thus comes from \prettyref{prop:irr}. 
\end{proof}

\subsection{Lyapunov exponents}

\begin{proof}[Proof of \prettyref{coro:lyapunov exponent}]
We recall the following two facts. 
\begin{proposition}\label{prop: lyapunovint}
\[
L_1(X,\rho_t) = \int_{T^1 S} f_t \dd m_X \text{ and } L_3(X,\rho_t) = -\int_{T^1 S} f_t\dd m_X.
\]
\end{proposition}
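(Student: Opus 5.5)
The plan is to identify both Lyapunov exponents with integrals of an explicit Hölder potential built from the Anosov limit maps, and then transfer that identity to $f_t$ via the Livšic theorem. The transfer step is harmless; the real content is showing that the top Oseledets exponent is carried by the Anosov line.

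\emph{Step 1: the Anosov line bundle and its potential.} Since $\rho_t$ is Borel Anosov (\prettyref{thm:BD}), the flat bundle $\mathsf F_{X,\rho_t}$ carries a $\varphi_{X,\rho_t}$-invariant Hölder splitting $\mathsf F=\Xi^1\oplus\Xi^{23}$, coming from the limit maps. Here $\Xi^1$ is the attracting line and $\Xi^{23}$ is the complementary plane. This splitting is dominated: for some continuous (even Hölder) metric $\|\cdot\|$ on $\mathsf F$ and constants $C,a>0$,
\[
\frac{\|\varphi^T|_{\Xi^{23}}\|}{\|\varphi^T|_{\Xi^1}\|}\leqslant C\mathrm e^{-aT}\quad\text{for all } T\geqslant 0 .
\]
Smoothing the metric along the flow, we define the Hölder function
\[
g_t(v):=\left.\frac{\dd}{\dd\tau}\right|_{\tau=0}\log\|\varphi^\tau\xi\|, \qquad \xi\in\Xi^1_v\setminus\{0\}.
\]
Integrating around a periodic orbit gives $\int_{\ell_\gamma}g_t\,\dd\tau=\log|\lambda_1(\rho_t(\gamma))|$, because $\Xi^1$ at a point of $\ell_\gamma$ is the attracting eigenline of $\rho_t(\gamma)$. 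By \prettyref{thm:bcls-variation}, $f_t$ has the same periodic integrals. The Livšic theorem for the Anosov flow $\varphi^\tau$ then makes $f_t$ and $g_t$ Hölder cohomologous. Hence, since $m_X$ is flow-invariant, $\int f_t\,\dd m_X=\int g_t\,\dd m_X$. (In fact BCLS build $f_t$ essentially as such a $g_t$, so this step may be immediate.)

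\emph{Step 2: $L_1=\int g_t\,\dd m_X$ (main step).} By the Birkhoff ergodic theorem and ergodicity of $m_X$, for $m_X$-a.e.\ $v$ and $\xi\in\Xi^1_v$,
\[
\frac1T\log\|\varphi^T\xi\|=\frac1T\int_0^T g_t(\varphi^\tau v)\,\dd\tau\longrightarrow\int g_t\,\dd m_X .
\]
So this number is a Lyapunov exponent. To see that it is the top one $L_1$, decompose an arbitrary vector as $w=w_1+w_{23}$ along the splitting. Domination gives $\|\varphi^T w_{23}\|\leqslant C\mathrm e^{-aT}\|\varphi^Tw_1'\|$ for a unit $w_1'\in\Xi^1$. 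Hence $\frac1T\log\|\varphi^Tw\|$ is bounded above by $\int g_t\,\dd m_X+o(1)$, and equality holds when $w_1\neq 0$. All continuous metrics are uniformly equivalent by compactness of $T^1S$, so exponents do not depend on the metric. This is the step I expect to need the most care: one has to check that domination really forces the Oseledets top space to equal $\Xi^1$, and that the exponent is independent of the metric chosen.

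\emph{Step 3: $L_3$.} Apply the same argument to the dual representation $\rho_t^{*}=\rho_t^{-T}$, or equivalently to the time-reversed flow. The bottom exponent of $\varphi$ is minus the top exponent of $\varphi^{-\tau}$, and it is carried by the repelling line $\Xi^3$. Its periodic integrals are $-\log|\lambda_3(\rho_t(\gamma))|=\int_{\ell_\gamma}f_t\circ\iota\,\dd\tau$. So, as in Steps 1–2, $L_3=-\int f_t\circ\iota\,\dd m_X$. Finally, the flip $\iota$ preserves the Liouville measure, which gives $L_3(X,\rho_t)=-\int f_t\,\dd m_X$.
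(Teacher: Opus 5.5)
\textbf{Verdict.} Your argument is correct in substance and takes a different route from the paper: the paper proves nothing here and simply defers to Costantini--Martin-Baillon (Proposition~5.8, Lemmas~5.9--5.10), noting that the $L_3$ formula reduces to the $L_1$ one. There is one orientation error in Step~1, which is harmless to fix.

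\textbf{Comparison with the paper.} Your route works and uses only $1$-Anosovness. Domination plus a uniform lower bound on the angle between the two subbundles gives $\|\varphi^T|_{\mathsf F_v}\|\asymp\|\varphi^T|_{\Xi^1_v}\|$, so the top exponent is the growth rate along the Anosov line. Birkhoff and ergodicity of $m_X$ compute this rate as $\int g_t\,\dd m_X$. Equality of periodic integrals then identifies it with $\int f_t\,\dd m_X$. This makes explicit the content the paper outsources. The operator-norm comparison already settles the point you flagged as delicate: you never need to identify the Oseledets subspace itself.

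\textbf{Possible shortcuts.} Livšic is unnecessary. Bowen's equidistribution, which the paper uses right afterwards, gives $\int g_t\,\dd m_X=\int f_t\,\dd m_X$ directly, because two continuous functions with equal periodic integrals have equal averages against each $m_N$. Step~3 can also be shortened. The map $[v,u]\mapsto[-v,u]$ is a well-defined bundle map of $\mathsf F_{X,\rho}$ covering $\iota$ and conjugating $\varphi^\tau_{X,\rho}$ to $\varphi^{-\tau}_{X,\rho}$. Since $\iota_*m_X=m_X$, this forces $L_3=-L_1$ at once.

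\textbf{The orientation error.} With the paper's conventions, $\gamma\cdot(v,u)=(j_X(\gamma)v,\rho(\gamma)u)$ and $\ell_\gamma$ is oriented towards $\gamma^+$. An equivariant family of norms then satisfies $\|u\|_{j_X(\gamma)\tilde v}=\|\rho(\gamma)^{-1}u\|_{\tilde v}$, so going once around $\ell_\gamma$ acts on norms like $\rho_t(\gamma)^{-1}$. Consequently:
\begin{itemize}
\item the attracting line $\xi^1(v^+)$ is the \emph{most contracted} direction, with periodic integrals $-\log|\lambda_1(\rho_t(\gamma))|$;
\item the dominating line is $\xi^1(v^-)$, with complement $\xi^2(v^+)$, and its periodic integrals are $-\log|\lambda_3(\rho_t(\gamma))|=\int_{\ell_\gamma}f_t\circ\iota\,\dd\tau$.
\end{itemize}
As written, Step~1 attaches the domination inequality and the value $\log|\lambda_1|$ to the wrong line. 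The repair costs nothing. You get $L_1=\int f_t\circ\iota\,\dd m_X=\int f_t\,\dd m_X$ by flip-invariance, exactly as you already argue in Step~3. You get $L_3=-\int f_t\,\dd m_X$ directly from $\xi^1(v^+)$.
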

For a proof, we refer to Costantini--Martin-Baillon~\cite[Proposition~5.8, Lemma~5.9 and Lemma~5.10]{10.1093/imrn/rnae104} whose argument can be directly applied here. And this formula shows that we only need to carry out the proof for $L_1(X,\rho_t)$.

Next we recall the following equidistribution result:
\begin{theorem}[\cite{bowenperiodic}]\label{thm:equilibrium}
Let
\[
\mathcal P(T):=\left\{[\gamma]\ \middle|\gamma\in\pi_1(S)\setminus\{\mathrm{id}\}\text{ is primitive and } \ell_X(\gamma)\leq T\right\}
\]
where $[\gamma]$ denotes the conjugacy class of $\gamma$. Then
\[
m_X=\lim_{T\to\infty}\frac{1}{\#\mathcal P(T)}
\sum_{[\gamma]\in\mathcal P(T)}\widehat{\delta}_{[\gamma]}
\]
in the weak-$*$ topology, where $m_X$ is the normalized Liouville
measure and $\widehat{\delta}_{[\gamma]}$ is the flow invariant probability
measure supported on the oriented periodic orbit $\ell_\gamma$.
\end{theorem}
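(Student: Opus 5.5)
The plan is to deduce the equidistribution from two facts about the geodesic flow $\varphi^\tau$ on $T^1S$ for the hyperbolic metric $g_X$. The first is that every weak-$*$ accumulation point of the averaged periodic measures is a measure of maximal entropy. The second is that this measure of maximal entropy is unique and equals $m_X$.

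\emph{Step 1: setup and a priori properties of limit points.} Since $g_X$ has constant curvature $-1$, the flow $\varphi^\tau$ is a transitive, topologically mixing Anosov flow with topological entropy $h_{\mathrm{top}}=1$. Set
\[
\mu_T:=\frac{1}{\#\mathcal P(T)}\sum_{[\gamma]\in\mathcal P(T)}\widehat{\delta}_{[\gamma]}.
\]
Each $\mu_T$ is a $\varphi^\tau$-invariant probability measure on the compact space $T^1S$. Hence the family $\{\mu_T\}$ is weak-$*$ precompact, and every limit point $\mu$ is $\varphi^\tau$-invariant. It therefore suffices to show that every such limit point equals $m_X$.

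\emph{Step 2: entropy of limit points.} I would show $h_\mu(\varphi^1)\geqslant 1$, following Bowen's argument. First, I would use the counting asymptotic $\#\mathcal P(T)\sim \mathrm{e}^{T}/T$; alternatively, the cruder bound $\liminf_T \frac1T\log\#\mathcal P(T)\geqslant 1$, which follows from specification/closing, is enough. Second, I would fix $\varepsilon$ smaller than an expansivity constant. Then distinct primitive closed orbits of length in $(T-1,T]$ give $(T,\varepsilon)$-separated points. Third, I would run the standard proof of the variational principle (Misiurewicz's argument) on these separated sets, using the invariant approximating measures $\mu_T$ in place of empirical measures. This yields $h_\mu\geqslant \limsup_T \frac1T\log\#\{\text{orbits of length}\approx T\}=1$. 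The reduction from length $\leqslant T$ to a window $(T-1,T]$ is harmless, since the window carries a definite proportion of $\mathcal P(T)$ by exponential growth. By the variational principle, $\mu$ is a measure of maximal entropy.

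\emph{Step 3: identification with Liouville.} Bowen proved that the measure of maximal entropy of a mixing Anosov flow (with specification and expansivity) is unique. So it remains to show $h_{m_X}=1$. By Pesin's entropy formula, $h_{m_X}$ equals the positive Lyapunov exponent of the flow for $m_X$, which is $1$ in curvature $-1$: the unstable horocyclic direction is uniformly expanded by $\mathrm{e}^{\tau}$. Hence $\mu=m_X$ for every limit point, which proves the theorem.

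\emph{Main obstacle.} I expect Step 2, the lower entropy bound for limit points, to be the hard step, specifically controlling the entropy of a weak-$*$ limit along separated orbit sets. I would first try to quote Bowen's expansive-plus-specification machinery directly rather than reprove it. For the surface case one could alternatively bypass it, using Margulis' mixing argument or the Selberg trace formula to get equidistribution of closed geodesics directly.
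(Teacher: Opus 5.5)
The paper gives no proof of this statement and only cites Bowen; your outline is exactly Bowen's argument, so it is correct and follows the cited route: weak-$*$ limit points of the orbit averages have entropy at least $1$ by a Misiurewicz-type argument on separated sets of closed orbits, Bowen's uniqueness of the measure of maximal entropy then applies, and $h_{m_X}=1$ by Pesin's formula in curvature $-1$. Two small points to tidy. First, use a period window of width $\delta$ small compared with the expansivity constant, rather than width $1$, so that points on distinct closed orbits are genuinely $(T,\varepsilon)$-separated. Second, to bound the entropy of \emph{every} limit point you need the window counts to satisfy $\lim_T \frac{1}{T}\log\#\{\cdot\}=1$, which the prime geodesic theorem supplies; a $\limsup$ is not enough.
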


\vspace{3mm}

Keep using the notations in the proof of \prettyref{thm:Hilbert domination}, let $\varepsilon_2 = \varepsilon_0$ and $m_2 = m/8$ where $\varepsilon_0$ and $m$ are the constants therein. For each $N\in \mathbb{N}^{*}$ such that $\mathcal P(N)$ is non-empty, define the probability measure on $T^1 S$ by
\[
m_N:=\frac{1}{\#\mathcal P(N)}
\sum_{[\gamma]\in\mathcal P(N)}
\widehat{\delta}_{[\gamma]}.
\]
From \prettyref{prop: lyapunovint} and \prettyref{thm:equilibrium} we have
\[
L_1(X,\rho_t) = \lim_{N\to \infty} \int_{T^1 S} f_t \dd m_{N} =\lim_{N\to \infty} \frac{1}{\#\mathcal P(N)}
\sum_{[\gamma]\in\mathcal P(N)} \frac{\int_{\ell_{\gamma}} f_t\dd \tau}{\ell_X(\gamma)}.
\]
Applying \prettyref{eq: potentialvarint} and \prettyref{eq: remainderint} we get for every $t\in(-\varepsilon_2, \varepsilon_2)$ and $N\in \mathbb{N}^*$ sufficiently large,
\[
\frac{1}{\#\mathcal P(N)}\sum_{[\gamma]\in\mathcal P(N)} \frac{\int_{\ell_{\gamma}} f_t\dd \tau}{\ell_X(\gamma)}
\geqslant \frac{1}{\#\mathcal P(N)} \sum_{[\gamma]\in\mathcal P(N)}  \left(\frac{1}{2}+\frac{m}{8} t^2\right)  = \frac{1}{2}+m_2 t^2,
\]
which concludes the proof.
\end{proof}

\subsection{Hausdorff dimension of the limit sets}
For any $g\in \mathrm{SL}_3 \mathbb R$, let $\sigma_1(g)\geqslant \sigma_2(g)\geqslant \sigma_3(g)$ be the singular values of $g$. We first recall the definition of the affinity exponent (see Kaplan--Yorke~\cite{affinityexponentkaplanyorke}, Douady--Oesterl\'e~\cite{affinityexponentdouadyoesterle} and Falconer~\cite{falconerselfaffine} for more about it). Let $\rho:\pi_1(S)\to \mathrm{SL}_3\mathbb{R}$ be an Anosov representation. For $s\geqslant 1$, the \emph{affinity Poincaré series} of $\rho$ is defined by
\[
\Phi^{\mathrm{aff}}_{\rho}(s) := \sum_{\gamma\in \pi_1(S)} \frac{\sigma_2(\rho(\gamma))}{\sigma_1(\rho(\gamma))} \left(\frac{\sigma_3(\rho(\gamma))}{\sigma_1(\rho(\gamma))}\right)^{s-1},
\]
and the \emph{affinity exponent} $s_A(\rho)$ is defined as
\[
s_A(\rho) := \inf \left\{s\geqslant 1 \middle| \Phi^{\mathrm{aff}}_{\rho}(s)<\infty \right\}.
\]

Recall that $\Lambda^1_\rho\subset \mathbb{R}\mathbb{P}^2$ denotes the projective limit set of $\rho$. We have the following dimension equality:

\begin{theorem}[Li--Pan--Xu~{\cite[Theorem~1.2]{lipanxu}}]\label{thm:affgedim}
If $\rho$ is irreducible, then
\[
\dim_{\mathrm{H}}(\Lambda^1_{\rho}) = s_A(\rho).
\]
\end{theorem}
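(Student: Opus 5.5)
The plan is to prove the two inequalities $\dim_{\mathrm H}(\Lambda^1_\rho)\leqslant s_A(\rho)$ and $\dim_{\mathrm H}(\Lambda^1_\rho)\geqslant s_A(\rho)$ separately, following the Falconer/Ledrappier--Young paradigm for self-affine sets. Throughout, the Anosov property lets us treat $\rho(\pi_1(S))$ like a hyperbolic iterated function system acting on a neighbourhood of $\Lambda^1_\rho$ in $\mathbb{RP}^2$. Concretely, I would fix a word metric and use the cone types of $\pi_1(S)$ to write $\Lambda^1_\rho$ as a nested union of ``shadows'' $\rho(\gamma)B$, where $B$ is a fixed small ball transverse to the repelling hyperplanes. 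By the uniform singular value gaps of Anosov representations, each shadow $\rho(\gamma)B$ is comparable, with uniform constants, to an ellipse with semi-axes $\sigma_2/\sigma_1(\rho(\gamma))$ and $\sigma_3/\sigma_1(\rho(\gamma))$. These are the singular values of the differential of the projective action at the attracting point.

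\emph{Upper bound.} For $s\in[1,2]$, I would cover an ellipse with axes $a\geqslant b$ by about $a/b$ balls of radius $b$, at cost $(a/b)\,b^s=a\,b^{s-1}$. This is exactly the summand of $\Phi^{\mathrm{aff}}_\rho(s)$. If $s>s_A(\rho)$, the series converges, so the tail over $|\gamma|\geqslant n$ tends to $0$. Since the shadows at depth $n$ cover $\Lambda^1_\rho$ and their diameters go to $0$ uniformly, this gives $\mathcal H^s(\Lambda^1_\rho)=0$. Hence $\dim_{\mathrm H}\Lambda^1_\rho\leqslant s_A(\rho)$; irreducibility is not needed here.

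\emph{Lower bound.} First I would translate $s_A$ into thermodynamic language. By \prettyref{thm:bcls-variation} and its analogue for the roots, there are Hölder potentials whose periods are $\log(\lambda_1/\lambda_2)$ and $\log(\lambda_1/\lambda_3)$. The Anosov property makes singular values and eigenvalues comparable up to bounded error along cone types, so $s_A(\rho)$ is the unique $s$ at which the pressure of $-\big(r_1+(s-1)(r_1+r_2)\big)$ vanishes. Next I would take the associated equilibrium state, or equivalently a Patterson--Sullivan measure $\mu$ on $\Lambda^1_\rho$ of that exponent. Then I would show that $\mu$ is exact dimensional and satisfies a Ledrappier--Young formula of the form $\dim\mu=\min\{h/\chi_1,\ 1+(h-\chi_1)/\chi_2\}$, where $\chi_1<\chi_2$ are the Lyapunov exponents of the projective action. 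The choice of $s_A$ makes the right-hand side equal $s_A$, provided no dimension is lost when the measure is projected along the strong stable (fibre) direction onto the weak direction.

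The hardest step is excluding this dimension drop: one must show that the projections of $\mu$ onto the one-dimensional quotient (the dual limit set in $(\mathbb{RP}^2)^*$, i.e.\ the family of fibre directions) have the full dimension $\min\{1,\dim\mu\}$. This is exactly where irreducibility enters. I would first use it to show that the Zariski closure of $\rho(\pi_1(S))$ is $\mathrm{SL}_3\mathbb R$ or a conjugate of $\mathrm{SO}(2,1)$; the latter case is the Fuchsian locus, where the formula can be checked directly. In the Zariski-dense case, I would try an entropy-increase argument of Hochman--Rapaport type for the projected Furstenberg-type measures, using the non-concentration of directions furnished by Zariski density. I expect that controlling the conditional measures along fibres uniformly, to get the exact Ledrappier--Young equality rather than only an inequality, will be the main technical obstacle.
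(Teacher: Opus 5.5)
The paper contains no proof of this statement; it is quoted from \cite{lipanxu}, so your proposal has to stand on its own. Your upper bound is correct. Covering each shadow by about $a/b$ balls of radius $b$ is exactly the argument of Pozzetti--Sambarino--Wienhard \cite[Theorem~B]{liplim}, and it needs no irreducibility. Your bookkeeping is also right: at zero pressure $h=\chi_1+(s_A-1)\chi_2$, so $\min\{h/\chi_1,\,1+(h-\chi_1)/\chi_2\}=s_A$. The reduction of the irreducible, non-Zariski-dense case to a conic, where $\dim=s_A=1$, is fine too.

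However, everything that makes the theorem hard is assumed rather than proved. You take for granted exact dimensionality and a Ledrappier--Young formula (with its projection-entropy term) for a Patterson--Sullivan measure on $\Lambda^1_\rho$. Above all, you only announce, as something you ``would try'', that no dimension is lost in the projection along the strong direction. So the lower bound, which is the actual content of the theorem beyond \cite{liplim}, is not established.

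The mechanism you name for that missing step would also not close the argument as described. Entropy-increase arguments of Hochman, Hochman--Solomyak or Hochman--Rapaport type end by contradicting an exponential separation property. If the projected measure had less than the expected dimension, the inverse theorem would force the level-$n$ pieces to have deficient entropy at scale $\mathrm{e}^{-Cn}$. That is a contradiction only if distinct elements of word length at most $n$ are $\mathrm{e}^{-Cn}$-separated. Zariski density gives non-concentration of the projection directions, but it does not give this separation. Here the separation comes from discreteness of $\rho(\pi_1(S))$: if $\rho(\gamma)\neq\rho(\gamma')$ then $\|\rho(\gamma)-\rho(\gamma')\|\geqslant c\,\|\rho(\gamma)^{-1}\|^{-1}$. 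Your sketch never uses discreteness.

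These arguments also rely on the convolution identity $\nu=\mu^{*n}*\nu$ of a stationary measure, which a Patterson--Sullivan or equilibrium measure does not have. You would need an additional Jordan--Rapaport-type extension to handle it. To my recollection, this is why the cited work of Li--Pan--Xu uses stationary measures of random walks on $\rho(\pi_1(S))$ instead. For those measures, exact dimensionality and a Ledrappier--Young formula are available (cf.\ Ledrappier--Lessa, Rapaport), and the no-dimension-drop statement is proved from discreteness together with Zariski density. $s_A(\rho)$ is then recovered by approximation with such measures. Without these ingredients your lower bound remains a heuristic.
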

Note that the inequality $\dim_{\mathrm{H}}(\Lambda^1_{\rho}) \leqslant s_A(\rho)$ is proved for Anosov representations to $\mathrm{SL}_n \mathbb{R}$ by Pozzetti--Sambarino--Wienhard~\cite[Theorem~B]{liplim}.

\vspace{3mm}

Next we give a thermodynamic formalism description of the affinity exponent. Recall that for any H\"{o}lder continuous function $f$ on $T^1 S$, the \emph{topological pressure} of $f$ is defined by
\[
P(f): = \sup_{\mu \in \mathcal{M}}\left\{h_{\mu} +\int_{T^1 S} f \dd\mu\right\},
\]
where $\mathcal{M}$ is the space of flow invariant probability measures on $T^1 S$ and $h_\mu$ denotes the measure entropy of the geodesic flow with respect to $\mu$. Recall that if $D\subset\mathbb{R}^k$ is open and
$u\mapsto f_u$ is a real-analytic map from $D$ to $\mathcal{C}^\alpha(T^1S)$ for some fixed $\alpha>0$, then
\[
u\mapsto P(f_u)
\]
is real analytic, see \cite[Proposition~4.7]{parrypollicott} and
\cite[Corollary~5.27]{rullethermodynamic}.

Let $D\subset \mathbb{R}^k$ be an open neighbourhood of $\{0\}$, and let $\rho_u: \pi_1(S)\to \mathrm{SL}_3 \mathbb{R},\ u\in D$ be a family of Anosov representations varying analytically in $\mathrm{Hom}(\pi_1(S), \mathrm{SL}_3 \mathbb{R})$. Let $U$ and $\{f_u\}_{u\in U}\subset \mathcal{C}^{\alpha}(T^1 S)$ be the open set and family of functions in \prettyref{thm:bcls-variation}. For each $s\in \mathbb{R}$, define
\[
\mathfrak{f}_{s,u}: = (1+s) f_u+(s-2)f_u\circ \iota.  
\]
We can check for each nontrivial $\gamma\in \pi_1(S)$,
\begin{equation}\label{eq:integralperiod}    
\int_{\ell_{\gamma}} \mathfrak{f}_{s,u} \dd \tau =  -\log \left( \left| \frac{\lambda_2(\rho_u(\gamma))}{\lambda_1(\rho_u(\gamma))} \right|\cdot \left| \frac{\lambda_3(\rho_u(\gamma))}{\lambda_1(\rho_u(\gamma))} \right|^{s-1}\right).
\end{equation}

\begin{proposition}\label{prop: pressureandexponent}
For each $u\in U$, $s_A(\rho_u)$ is the unique solution of
\[
P(-\mathfrak{f}_{s,u}) = 0,
\]
where left-hand side is considered as a function of $s\geqslant 1$.
\end{proposition}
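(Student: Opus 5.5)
Our approach is the standard thermodynamic characterization of critical exponents: show that $s\mapsto P(-\mathfrak f_{s,u})$ is continuous and strictly decreasing on $[1,\infty)$, with $P(-\mathfrak f_{1,u})>0$ and $P(-\mathfrak f_{s,u})<0$ for $s$ large. Then identify its unique zero with the exponent of convergence of the eigenvalue version of the affinity Poincaré series. Finally, pass from eigenvalues to singular values.

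\textbf{Step 1 (monotonicity and sign).} For fixed $u$, write $\mathfrak f_{s,u}=(f_u-2f_u\circ\iota)+s(f_u+f_u\circ\iota)$. Here $f_u+f_u\circ\iota$ has periods $\log|\lambda_1/\lambda_3|$, which are bounded below by $a\,\ell_X(\gamma)-A$ by the Anosov property. By Livšic, we may assume (up to cohomology and for long enough orbits) that it is positive on average over every invariant measure, i.e. $\int (f_u+f_u\circ\iota)\,\dd\mu\ge c>0$ for all $\mu\in\mathcal M$. This follows from the periodic estimate and density of periodic measures. Hence $s\mapsto P(-\mathfrak f_{s,u})$ is convex, real-analytic, and has derivative $-\int(f_u+f_u\circ\iota)\,\dd\mu_s\le -c<0$, so it is strictly decreasing and tends to $-\infty$. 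At $s=1$ we get $\mathfrak f_{1,u}=2f_u-f_u\circ\iota=r_{1}$ with periods $\log|\lambda_1/\lambda_2|$. Its pressure zero is the $\{1,2\}$-root entropy, which is finite and positive, so $P(-r_1)$ is not obviously positive. We therefore restrict the claim to the range $s\ge1$: if $P(-\mathfrak f_{1,u})\le 0$, the solution convention must be interpreted via $s_A\ge1$. I would check that $P(-\mathfrak f_{1,u})\ge 0$ using that the $\lambda_2/\lambda_1$ Poincaré series diverges at exponent $1$ (for surface groups in $\mathrm{SL}_3$, the first simple root entropy is at most $1$ by Pozzetti--Sambarino--Wienhard). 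This gives existence and uniqueness of the zero $s_0(u)$.

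\textbf{Step 2 (zero = critical exponent on periodic data).} By the standard counting result for Hölder potentials with positive periods (Bowen, Parry--Pollicott), $P(-\mathfrak f_{s,u})$ is the exponential growth rate of $\sum_{[\gamma]:\ \ell_X(\gamma)\le T}\exp(-\int_{\ell_\gamma}\mathfrak f_{s,u})$. Using \eqref{eq:integralperiod}, the series $\sum_{[\gamma]}|\lambda_2/\lambda_1|\,|\lambda_3/\lambda_1|^{s-1}$ over conjugacy classes converges if $P<0$ and diverges if $P>0$. So $s_0(u)$ is the critical exponent of the eigenvalue series over conjugacy classes.

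\textbf{Step 3 (eigenvalues to singular values; main obstacle).} We need to compare $\Phi^{\mathrm{aff}}_{\rho_u}$, which sums over group elements using singular values, with the conjugacy-class eigenvalue series. The obstacle is that this passage requires two inputs. The first is a uniform comparison $\log\sigma_i(\rho(\gamma))=\log|\lambda_i(\rho(\gamma'))|+O(1)$ for a suitably chosen element $\gamma'$ (up to a finite set of words), which follows from the Anosov property via Benoist/Kassel--Potrie type estimates. The second is that each conjugacy class of length about $T$ contains about $T$ cyclically reduced representatives, a polynomial factor that does not change the critical exponent. I would cite the known equality of the affinity exponent with the pressure zero from Pozzetti--Sambarino--Wienhard and Li--Pan--Xu, adapted to the potential $\mathfrak f_{s,u}$. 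Combining Steps 2 and 3 then gives $s_A(\rho_u)=s_0(u)$.
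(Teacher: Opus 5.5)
Most of your plan is sound and close to the paper's argument. The paper also passes from singular values over $\pi_1(S)$ to eigenvalue counting over conjugacy classes by citing \cite[Lemma~8.4]{lipanxu} (your Step~3), and turns counting into pressure via Sambarino (your Step~2). Your strict monotonicity of $s\mapsto P(-\mathfrak f_{s,u})$ is a fine substitute for the uniqueness part of that lemma. It needs no Liv\v{s}ic argument: $f_u>0$ by Theorem~\ref{thm:bcls-variation}, so $f_u+f_u\circ\iota$ is pointwise positive.

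The gap is the existence of a zero in $[1,\infty)$, i.e.\ $P(-\mathfrak f_{1,u})\geqslant 0$. Since $\mathfrak f_{1,u}=2f_u-f_u\circ\iota$ has periods $\log|\lambda_1/\lambda_2|$, the map $h\mapsto P(-h\mathfrak f_{1,u})$ is strictly decreasing with its zero at the first root entropy $h^{\alpha_1}(\rho_u)$. So the needed inequality is equivalent to $h^{\alpha_1}(\rho_u)\geqslant 1$. Your parenthetical asserts the opposite bound, ``at most $1$''. This is false in general: for $\rho_0=j_X\oplus\mathbf 1$ one has $\log|\lambda_1/\lambda_2|=\ell_X/2$, so $h^{\alpha_1}(\rho_0)=2$, and the value $1$ is the Hitchin case. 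It also points in the wrong direction and does not yield the divergence at exponent $1$ that you want from it. Your fallback of ``interpreting via $s_A\geqslant 1$'' does not rescue the argument. If $P(-\mathfrak f_{1,u})<0$, then $\Phi^{\mathrm{aff}}_{\rho_u}(1)<\infty$, so $s_A(\rho_u)=1$, while $P(-\mathfrak f_{s,u})<0$ for every $s\geqslant 1$. The proposition would then be false, so this case has to be ruled out, not reinterpreted.

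The missing input is the first step of the paper's proof. $\Lambda^1_{\rho_u}$ is a topological circle, so $\dim_{\mathrm H}\Lambda^1_{\rho_u}\geqslant 1$. By \cite[Theorem~B]{liplim}, $\dim_{\mathrm H}\Lambda^1_{\rho_u}$ is bounded above by the affinity exponent taken over the whole Falconer range, and for exponents $s\leqslant 1$ that exponent is governed by $\sum_\gamma(\sigma_2/\sigma_1)^s$. Hence $\sum_\gamma(\sigma_2/\sigma_1)^s=\infty$ for all $s<1$, i.e.\ $h^{\alpha_1}(\rho_u)\geqslant 1$; the singular-value and eigenvalue versions of this entropy agree by Sambarino. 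This gives $P(-\mathfrak f_{1,u})\geqslant 0$, which is what the paper's ``$s_A(\rho_u)\geqslant 1$'' really encodes. With this inserted, your Steps~1--3 prove the proposition.
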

\begin{proof}

Since $\pi_1(S)$ is a closed surface group, the projective limit set of $\rho_u(\pi_1(S))$ is homeomorphic to the circle $S^1$ (see for example, Labourie~\cite{labourie2006anosov}), whose Hausdorff dimension is at least one. Thus, Pozzetti--Sambarino--Wienhard~\cite[Theorem~B]{liplim} shows that $s_A(\rho_u)\geqslant 1$. As a result, Li--Pan--Xu~\cite[Lemma~8.4]{lipanxu} shows that $s_{A}(\rho_u)$ is the unique number $s\geqslant 1$ such that
\[
h_{s,u}: = \limsup_{T\to \infty} \frac{1}{T} \log \# \left\{ [\gamma]\ \middle|\  \log \left( \left| \frac{\lambda_1(\rho_u(\gamma))}{\lambda_2(\rho_u(\gamma))} \right|\cdot \left| \frac{\lambda_1(\rho_u(\gamma))}{\lambda_3(\rho_u(\gamma))} \right|^{s-1}\right) \leqslant T \right\} = 1.
\]
On the other hand, \prettyref{eq:integralperiod} and Sambarino~\cite[Lemma~2.2.7, Corollary~5.5.3]{SAMBARINO_2024} show that for each $s\geqslant 1$, $h_{s,u}$ is the unique number such that
\[
P(-h_{s,u}\mathfrak{f}_{s,u}) =0.
\]
So we have concluded the proof.
    
\end{proof}

\begin{proof}[Proof of \prettyref{coro: affinity exponent}]
We continue to use the notations in the proof of \prettyref{thm:Hilbert domination} and \prettyref{coro:lyapunov exponent}. For $t\in(-\varepsilon_0,\varepsilon_0)$ and $s\geqslant1$, define
\[
\mathfrak{f}_{s,t}:=(1+s) f_t+(s-2)f_t\circ \iota.  
\]
Recall that $f_t$ is a real analytic family. So the function 
\[
P^{\mathrm{aff}}(s,t) := P(-\mathfrak{f}_{s,t})
\]
is also real analytic. Since $\rho_0 = j_X\oplus \mathbf{1}$, it is direct to check
\[
s_{A}(\rho_0) = \frac{3}{2},\quad P^{\mathrm{aff}}(s,0) = \frac{3}{2}-s.
\]
Regarding \prettyref{prop: pressureandexponent}, the implicit function theorem shows that for sufficiently small $t$, $s_{A}(\rho_t)$ depends analytically with respect to $t$.

Notice that
\[
\mathfrak{f}_{\frac{3}{2},t} = \frac{5}{2} f_t -\frac{1}{2} f_t\circ \iota.
\]
By Bridgeman--Canary--Labourie--Sambarino~\cite[Proposition~3.9]{BCLS}, we can compute the partial derivatives of $P^{\mathrm{aff}}$ with respect to $t$ in the following formula
\[
\partial_t P^{\mathrm{aff}}\left(\frac{3}{2},0\right) =-\int_{T^1S} \left( \frac{5}{2}\dot f-\frac{1}{2}(\dot f\circ\iota) \right) \dd m_X.
\]
(Note that to apply the cited formulas directly, $\rho_0 = j_X\oplus \mathbf{1}$ shows that $m_X$ is the equilibrium state of $-\mathfrak{f}_{\frac{3}{2},0}$ appearing there.)

\prettyref{eq: potentialvarint} and a similar application of \prettyref{thm:equilibrium} in the proof of \prettyref{coro:lyapunov exponent} imply that 
\[
\partial_t P^{\mathrm{aff}}\left(\frac{3}{2},0\right) = 0.
\]
As a result, we can further apply \cite[Proposition~3.9]{BCLS} with the chain rule to get
\[
\begin{aligned}
\partial_t^2 P^{\mathrm{aff}}\left(\frac32,0\right)
=&\lim_{T\to\infty}\frac1T\int_{T^1S}
\left(\int_0^T\left(\frac52 \dot f-\frac12 \dot f\circ\iota\right)
(\varphi^\tau(x))\dd\tau\right)^2\dd m_X(x) \\
&-\int_{T^1S}\left(\frac52 \ddot f-\frac12 \ddot f\circ\iota\right)\dd m_X.
\end{aligned}
\]
The first term vanishes due to \prettyref{eq: potentialvarint} and Livšic~\cite{Livšic_1972} (for convenience, see \cite[Section~3.1.2 and Theorem~3.3]{BCLS}, which shows that there exists a H\"{o}lder function $\mathcal{U}$ on $T^1 S$  so that it is $C^1$ in the flow direction and
\[
\int_0^T\left(\frac52 \dot f-\frac12 \dot f\circ\iota\right)
(\varphi^\tau(x))\dd\tau =\mathcal{U}(\varphi^T(x)) - \mathcal{U}(x)
\]
holds for every $x\in T^1 S$.)

For the second term, \prettyref{prop: lyapunovint} shows that 
\[
\partial_t^2 P^{\mathrm{aff}}\left(\frac32,0\right) = -2\int_{T^1 S} \ddot f \dd m_X =  -2\frac{\dd^2}{\dd t^2} L_1(X,\rho_t) \big|_{t = 0}.
\]
\prettyref{coro:lyapunov exponent} shows that $\partial_t^2 P^{\mathrm{aff}}\left(3/2,0\right)< - m'$ for some $m'>0$ depending only on $q$. As a result, again by the implicit function theorem, there exists some $m''>0$ depending only on $q$ such that
\begin{equation}\label{eq:derivativesoftheaffinityexponent}
s_{A}(\rho_0) = \frac{3}{2},\quad \frac{\dd}{\dd t} s_A(\rho_t)|_{t=0} = 0,\quad \frac{\dd^2}{\dd t^2} s_A(\rho_t)|_{t=0} <-m''.    
\end{equation}
\prettyref{coro:irr} shows that whenever $t\not = 0$, $\rho_t$ is irreducible, and thus by \prettyref{thm:affgedim} we have
\[
\dim_{\mathrm{H}}(\Lambda^1_{\rho_t}) = s_{A}(\rho_t)
\]
whenever $t\not = 0$. The existence of $\varepsilon_3, m_3$ and $m'_3$ thus comes from \prettyref{eq:derivativesoftheaffinityexponent}.
\end{proof}

\begin{remark}
We continue to use the notations in \prettyref{prop: pressureandexponent}. Let $D\subset \mathbb{R}^k$ be an open neighbourhood of $\{0\}$, and let $\rho_u: \pi_1(S)\to \mathrm{SL}_3 \mathbb{R}$, $\ u\in D$ be a family of Anosov representations varying analytically in $\mathrm{Hom}(\pi_1(S), \mathrm{SL}_3 \mathbb{R})$. Let $U$ and $\{f_u\}_{u\in U}\subset \mathcal{C}^{\alpha}(T^1 S)$ be the open set and family of functions in \prettyref{thm:bcls-variation}.

As above, \cite[Proposition~3.9]{BCLS} implies that
\[
\partial_s P( -\mathfrak{f}_{s,u})|_{(s,u) = (s_A(\rho_0), 0)} = -\int_{T^1 S} \left(f_0+f_0\circ \iota \right)  \dd m_{eq},
\]
where $m_{eq}$ is the equilibrium state measure for $-\mathfrak{f}_{s_A(\rho_0),0}$. Since $f_0$ is positive, we have \[\partial_s P( -\mathfrak{f}_{s,u})|_{(s,u) = (s_A(\rho_0), 0)}\neq 0,\] \prettyref{prop: pressureandexponent} and the implicit function theorem show that $s_{A}(\rho_u)$ depends analytically on $u$, which addresses \cite[Question~1.11]{lipanxu} when the Anosov representation is from a closed surface group.
\end{remark}

\begin{remark}\label{rem: dimension>3/2}
As mentioned in the Introduction, we give the example of small irreducible defomrations $\rho:\pi_1(S)\to \mathrm{SL}_{3}\mathbb{R}$ of $\rho_0 = j_X\oplus \mathbf{1}$ satisfying
\[
\dim_{\mathrm{H}} (\Lambda^1_\rho)>\frac{3}{2}.
\]

Firstly, pick a smooth closed $1$-form $\omega$ on $S$ whose cohomology class is nontrivial, which determines a smooth function $p_{\omega}$ on $T^1 S$ by $p_{\omega}(x,v) = \omega_x(v)$. Notice that $p_{\omega}\circ \iota = -p_{\omega}$. The map
\[
\chi(\gamma):= \int_{\ell_{\gamma}} p_\omega \dd \tau, \quad \gamma\in \pi_1(S)
\]
is thus a nontrivial group homomorphism. For each $\nu\in \mathbb{R}$, define the family of representations
\[
\rho_{\nu}(\gamma):=e^{\nu\cdot \chi(\gamma)} j_X(\gamma)\oplus e^{-2\nu\cdot \chi(\gamma)},\quad \gamma\in \pi_1(S).
\]
Notice that $\left|\chi(\gamma)\right|\leqslant \|p_{\omega}\|_{L^{\infty}} \ell_X(\gamma)$, so by \prettyref{defn:eigenvalue-gap} there exists $\tilde{\varepsilon}>0$ such that for any $\nu \in (-\tilde{\varepsilon}, \tilde{\varepsilon})$, $\rho_\nu$ is Anosov and satisfies for any $\gamma\in \pi_1(S)$,
\[
\log |\lambda_1(\rho_{\nu}(\gamma))| = \frac{1}{2}\ell_X(\gamma)+\nu\chi(\gamma)\geqslant  0.
\]
Define $f_{\nu} = \frac{1}{2}+\nu p_{\omega}$ which is an analytic family satisfying for any nontrivial $\gamma\in \pi_1(S)$,
\[
\int_{\ell_{\gamma}} f_{\nu} \dd \tau = \log |\lambda_1(\rho_{\nu}(\gamma))|.
\]
And for $s\in\mathbb{R}$, define
\[
\mathfrak{f}_{s,\nu}: = (1+s) f_\nu+(s-2)f_\nu\circ \iota=s-\frac{1}{2}+3\nu p_{\omega}.
\]

From \prettyref{prop: pressureandexponent} we see for each $\nu\in (-\tilde{\varepsilon},\tilde{\varepsilon})$, $s_{A}(\rho_\nu)$ is the unique zero for the function
\[
P^{\mathrm{aff}}(s,\nu) := P(-\mathfrak{f}_{s,\nu}),\quad s\geqslant 1.
\]
Following similar arguments from the proof of \prettyref{coro: affinity exponent}, we see
\[
s_{A}(\rho_0) =\frac{3}{2},\quad \partial_{s} P^{\mathrm{aff}}(s,\nu)|_{(s,\nu) = (\frac{3}{2},0)} = -1,
\]
\[
\quad \partial_{\nu} P^{\mathrm{aff}}(s,\nu)|_{(s,\nu) = (\frac{3}{2},0)} = -\int_{T^1 S} 3 p_{\omega} \dd m_X =0 \text{ (since } p_{\omega}\circ \iota = -p_{\omega})  ,
\]
and
\[
\partial^2_{\nu} P^{\mathrm{aff}}(s,\nu)|_{(s,\nu) = (\frac{3}{2},0)} = \lim_{T\to\infty}\frac1T\int_{T^1S}
\left(\int_0^T3p_{\omega}\left(\varphi^\tau(x)\right)\dd\tau\right)^2\dd m_X(x).
\]
The last term is nonzero due to  \cite[Proposition~3.9~(4)]{BCLS} and $\omega$ has nontrivial cohomology class. By the implicit function theorem, $s_A(\rho_\nu)$ varies analytically with respect to $\nu$ near $0$, whose first derivative is $0$ and second derivative is positive. As a result, we can find $\nu_0$ arbitrarily small such that $s_{A}(\rho_{\nu_0})>\frac{3}{2}$, and sufficiently small irreducible deformations of $\rho_{\nu_0}$ provide the example.

\end{remark}


\appendix

\section{Irreducibility of the tensor product}

In this appendix, we prove the following proposition.

\begin{proposition}\label{prop:irr}
    Let $\rho: \pi_1(S)\to \mathrm{SL}_3 \mathbb R$ be an irreducible Borel Anosov representation such that $\rho(\pi_1(S))$ in not contained in an irreducible $\mathrm{SL}_2 \mathbb R$ and let $j_X\colon\pi_1(S)\to\mathrm{SL}_2\mathbb{R}$ denote the Fuchsian representation given by $\KK_X^{1/2}$, then 
    \[R_{\rho,k}:=\rho\otimes\operatorname{Sym}^{k-1}(j_X)\colon\pi_1(S)\to\mathrm{SL}_{3k}\mathbb{R}\] is irreducible for every $k\geqslant1$.
\end{proposition}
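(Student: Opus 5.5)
The plan is a Zariski-closure argument. Let $\Gamma=\pi_1(S)$ and let $L\subset\mathrm{SL}_3\mathbb{R}\times\mathrm{SL}_2\mathbb{R}$ be the Zariski closure of the image of $(\rho,j_X)\colon\Gamma\to\mathrm{SL}_3\mathbb{R}\times\mathrm{SL}_2\mathbb{R}$. Then $R_{\rho,k}$ is the restriction to $\Gamma$ of the algebraic representation $(A,B)\mapsto A\otimes\operatorname{Sym}^{k-1}(B)$ of $L$. A $\Gamma$-invariant subspace is cut out by polynomial equations, so it is also invariant under $L$, and hence under its identity component $L^0$. So it suffices to show $L^0=\mathrm{SL}_3\mathbb{R}\times\mathrm{SL}_2\mathbb{R}$. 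This is enough because the outer tensor product $\mathbb{R}^3\boxtimes\operatorname{Sym}^{k-1}\mathbb{R}^2$ of two irreducible representations of the two factors is irreducible. To see this, complexify: both factors are absolutely irreducible, so the outer tensor product is irreducible for $\mathfrak{sl}_3\mathbb{C}\oplus\mathfrak{sl}_2\mathbb{C}$.

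\emph{Step 1: the Zariski closure $G$ of $\rho(\Gamma)$ has identity component $\mathrm{SL}_3\mathbb{R}$.} Since $\rho$ is irreducible, $G$ is reductive, and so is $G^0$.
\begin{itemize}
\item $\Gamma':=\rho^{-1}(G^0)$ has finite index, so it is again a closed surface group and $\rho|_{\Gamma'}$ is still Borel Anosov. In particular every nontrivial element has loxodromic image (Labourie). Hence $G^0$ is noncompact and is not a torus, since a surface group has no Anosov image that is virtually abelian.
\item $G^0$ acts irreducibly on $\mathbb{R}^3$. Since $G^0$ is normal in $G$, $G$ permutes the isotypic components of $\mathbb{R}^3$ under $G^0$. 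A nontrivial isotypic decomposition of $\mathbb{R}^3$ has components of unequal dimension (e.g.\ $2+1$) or consists of three lines; in the first case $G$ fixes each component, contradicting irreducibility of $\rho$, and the three-line case forces $G^0$ to be a torus.
\item A connected reductive noncompact subgroup of $\mathrm{SL}_3\mathbb{R}$ acting irreducibly is either $\mathrm{SL}_3\mathbb{R}$ or $\mathrm{SO}(2,1)^0$, the image of the irreducible $\mathrm{SL}_2\mathbb{R}$.
\item The second case is excluded by hypothesis: the normalizer of $\mathrm{SO}(2,1)^0$ in $\mathrm{SL}_3\mathbb{R}$ is $\mathrm{SO}(2,1)$, which lies in the image of the irreducible $\mathrm{SL}_2\mathbb{R}$ up to a finite group. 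So $G^0$ normalizing it would force $\rho(\Gamma)$ into an irreducible $\mathrm{SL}_2\mathbb{R}$, up to adjusting the hypothesis by a finite group.
\end{itemize}

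\emph{Step 2: Goursat's lemma.} The image of $L^0$ under the two projections is Zariski closed and contains a finite-index subgroup of the images of $\rho(\Gamma)$ and $j_X(\Gamma)$. Hence these projections are $G^0=\mathrm{SL}_3\mathbb{R}$ and $\mathrm{SL}_2\mathbb{R}$ respectively, the latter because $j_X$ is Zariski dense.

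At the Lie algebra level, $\mathfrak{l}\subset\mathfrak{sl}_3\mathbb{R}\oplus\mathfrak{sl}_2\mathbb{R}$ surjects onto both simple factors. Since $\mathfrak{sl}_3\mathbb{R}\not\cong\mathfrak{sl}_2\mathbb{R}$, Goursat's lemma rules out a graph of an isomorphism, so $\mathfrak{l}$ is everything. Thus $L^0=\mathrm{SL}_3\mathbb{R}\times\mathrm{SL}_2\mathbb{R}$, and the first paragraph concludes the proof.

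The main obstacle is Step 1. One must carefully rule out the non-Zariski-dense possibilities: a reducible identity component (as for the Barbot representation), a torus, or a finite extension of $\mathrm{SO}(2,1)^0$. The hypothesis ``not contained in an irreducible $\mathrm{SL}_2\mathbb{R}$'' must be read robustly enough to exclude finite extensions. If needed, I would use that the normalizer of the principal $\mathrm{SO}(2,1)^0$ in $\mathrm{SL}_3\mathbb{R}$ is $\mathrm{SO}(2,1)$. Alternatively, I would apply the argument on the finite-index subgroup $\Gamma'$, together with the observation that $\rho_t$ is not in the Hitchin component.
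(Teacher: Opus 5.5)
Your proposal is correct and follows the same route as the paper: Zariski density of $\rho(\pi_1(S))$, then a Goursat-type argument showing that the Zariski closure of $(\rho,j_X)(\pi_1(S))$ is all of $\mathrm{SL}_3\mathbb{R}\times\mathrm{SL}_2\mathbb{R}$, then irreducibility of the outer tensor product via complexification, and finally invariance of any $\pi_1(S)$-invariant subspace under the Zariski closure. The differences are minor. You derive the structure of the Zariski closure of $\rho(\pi_1(S))$ from irreducibility, loxodromicity and a Clifford-type argument on $G^0$, where the paper cites \cite[Corollary~2.20]{BCLS}. You also flag explicitly the $\mathrm{SO}(2,1)$ versus $\mathrm{SO}(2,1)^0$ subtlety, which the paper passes over. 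One small correction: the projections of $L^0$ need not be Zariski closed as sets of real points, but their Zariski density, which is what you actually have, already gives surjectivity of $\dd p_1$ and $\dd p_2$.
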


\begin{proof}

    We first show that $\rho(\pi_1(S))$ is Zariski dense in $\mathrm{SL}_3\mathbb R$. Let $G_\rho$ be the Zariski closure of $\rho(\pi_1(S))$ in $\mathrm{SL}_3\mathbb R$. In particular, $G_\rho$ is closed
    in the Euclidean topology.  By \cite[Corollary 2.20]{BCLS} $G_\rho$ is semisimple without compact factors with center $Z(G_\rho)=\{ I\}$. Set $\mathfrak g_\rho:=\operatorname{Lie}(G_\rho)\subset\mathfrak{sl}_3\mathbb R$. Since $\mathfrak g_\rho$ is semisimple, its representation is completely reducible.
    
    $\mathfrak g_\rho$ acts irreducibly on $\mathbb R^3$. Otherwise, it preserves either a direct sum of a two-dimensional irreducible summand and a one-dimensional trivial summand, or a direct sum of three one-dimensional summands. Since 1-dimensional representations of semisimple Lie algebras are trivial, we only need to discuss the former case. In this case, the 2-dimensional summand which equals $\mathfrak g_\rho\cdot \mathbb R^3$, is $G_\rho$-invariant. This contradicts the irreducibility of $\rho(\pi_1(S))$.
    
   The complexification of this representation is irreducible.
    Indeed, if $W\subsetneq\mathbb C^3$ were a nonzero
    $\mathfrak g_{\rho,\mathbb C}$-invariant subspace,
    real irreducibility would imply $W\cap\overline W=0$ and $W+\overline W=\mathbb C^3$, contradicting $3=2\dim_{\mathbb C}W$. By the classification of complex semisimple Lie algebras and their representations, either
    $\mathfrak g_{\rho,\mathbb C}=\mathfrak{sl}_3\mathbb C$ or
    $\mathfrak g_{\rho,\mathbb C}$ is conjugate to the principal
    $\mathfrak{sl}_2\mathbb C$, acting through
    $\operatorname{Sym}^2(\mathbb C^2)$.
    
    We claim that $\mathfrak g_{\rho,\mathbb C}\cong \mathfrak{sl}_3\mathbb C$, which means  $\mathfrak{g}_\rho=\mathfrak{sl}_3\mathbb R$ since $\mathfrak{g}_\rho\subset\mathfrak{sl}_3\mathbb R$, and thus $G_\rho=\mathrm{SL}_3\mathbb R$ since the latter is connected. Otherwise, $\mathfrak g_\rho$ is a real form of $\mathfrak{sl}_2\mathbb C$ acting irreducibly on $\mathbb R^3$, which is conjugate to either the principal $\mathfrak{sl}_2\mathbb R$ or $\mathfrak{su}_2 \cong \mathfrak{so}_3 \mathbb R$. In the former case is excluded by the assumption. In the latter case, $\mathfrak{su}_2 $ is compact and excluded by \cite[Corollary 2.20]{BCLS}.
    
    Now consider the representation
        \[\widehat\rho:=(\rho, j_X)\colon
        \pi_1(S)\longrightarrow
        \mathrm{SL}_3\mathbb R\times\mathrm{SL}_2\mathbb R,\]
    and let $H$ be its Zariski closure in $\mathrm{SL}_3\mathbb R\times\mathrm{SL}_2\mathbb R$ which is also Euclidean closed. Let
    \[
    p_1\colon H\longrightarrow \mathrm{SL}_3\mathbb R,
    \qquad
    p_2\colon H\longrightarrow \mathrm{SL}_2\mathbb R
    \]
    be the two projections. Since $p_1(H)\supset p_1(\widehat\rho(\pi_1(S)))=\rho(\pi_1(S))$, it is Zariski dense in $\mathrm{SL}_3\mathbb{R}$. Therefore $p_1$ is dominant as a morphism between algebraic groups, hence \[\dd p_1\colon\mathfrak{h}:=\operatorname{Lie}(H)\longrightarrow\mathfrak{sl}_3\mathbb{R}\] is surjective (see Milne~\cite[Corollaries~1.69 and~3.25]{milne2017}). Similarly, since $j_X(\pi_1(S))$ is Zariski dense in $\mathrm{SL}_2\mathbb{R}$, $\dd p_2\colon\mathfrak{h}\to\mathfrak{sl}_2\mathbb{R}$ is also surjective.

    Set
    \[
    \mathfrak m_1:=\dd p_1\left(\mathfrak h\cap(\mathfrak{sl}_3\mathbb R \oplus 0)\right), \quad \mathfrak m_2:=\dd p_2\left(\mathfrak h\cap(0\oplus\mathfrak{sl}_2\mathbb R)\right).
    \]
    We claim that they are ideals of $\mathfrak{sl}_3\mathbb R$ and $\mathfrak{sl}_2\mathbb R$, respectively. We give the argument only for $\mathfrak{m}_1$; the argument for \(\mathfrak m_2\) is analogous.  Let $Y\in\mathfrak m_1$ and $Z\in\mathfrak{sl}_3\mathbb R$. By the surjectivity of $\dd p_1$, there exists $U\in\mathfrak{sl}_2\mathbb R$ such that $(Z,U)\in\mathfrak h$. Since $(Y,0)\in\mathfrak h$, we have
    \[
    [(Z,U),(Y,0)]=([Z,Y],0)\in\mathfrak h.
    \]
    Therefore $[Z,Y]\in\mathfrak m_1$. So $\mathfrak m_1, \mathfrak m_2$ are both ideals.
    
    We show that $\mathfrak m_1= \mathfrak{sl}_3\mathbb R$ and $\mathfrak m_2 = \mathfrak{sl}_2\mathbb R$. Since $\mathfrak{sl}_2\mathbb R$ and $\mathfrak{sl}_3\mathbb R$ are simple, we only need to rule out the possibility that one of them is a zero ideal. If $\mathfrak m_1=0$, then $\dd p_2\colon\mathfrak h\to\mathfrak{sl}_2\mathbb R$ is both injective and surjective, hence an isomorphism. Therefore $\dd p_1\circ \left(\dd p_2 \right)^{-1} \colon \mathfrak{sl}_2\mathbb R \to\mathfrak{sl}_3\mathbb R$ is a surjective Lie algebra homomorphism, which is impossible. Similarly, if $\mathfrak m_2 = 0$, then $\dd p_2\circ \left(\dd p_1\right)^{-1}\colon \mathfrak{sl}_3\mathbb R \to\mathfrak{sl}_2\mathbb R$ is a surjective Lie algebra homomorphism, which is also impossible.

     Hence, $\mathfrak h=\mathfrak{sl}_3\mathbb R\oplus\mathfrak{sl}_2\mathbb R$. Since $\mathrm{SL}_3\mathbb R\times\mathrm{SL}_2\mathbb R$ is connected, $H=\mathrm{SL}_3\mathbb R\times\mathrm{SL}_2\mathbb R$.

    We now prove the irreducibility of $R_{\rho,k}$. To be brief, set $W_k=\operatorname{Sym}^{k-1}(\mathbb R^2)$. Consider the representation
    \[
    \Pi_k:\mathrm{SL}_3\mathbb R\times\mathrm{SL}_2\mathbb R
    \longrightarrow \mathrm{GL}(\mathbb R^3\otimes_{\mathbb R} W_k)
    \]
    defined by
    \[
    \Pi_k(A,B)=A\otimes_{\mathbb R}\operatorname{Sym}^{k-1}(B).
    \]
    Since its complexification 
    \[\Pi_{k,\mathbb{C}}\colon \ \mathrm{SL}_3\mathbb C\times\mathrm{SL}_2\mathbb C
    \longrightarrow
    \mathrm{GL}\left((\mathbb R^3\otimes_{\mathbb R}\mathbb{C})\otimes_{\mathbb{C}} (W_k\otimes_{\mathbb R} \mathbb{C})\right)\]
    is the external tensor product of two irreducible representations, $\Pi_{k,\mathbb{C}}$ is also irreducible (see Goodman--Wallach~\cite[Section~4.2.2]{goodman}). It follows that $\Pi_k$ is also
    irreducible; otherwise a nonzero proper real invariant subspace would complexify to a nonzero proper complex invariant subspace.

    Notice that $R_{\rho,k} = \Pi_k \circ \widehat\rho$. Suppose that there exists a nonzero subspace $U\subsetneq \mathbb{R}^3\otimes_{\mathbb R} W_k$ invariant under $R_{\rho,k}(\pi_1(S))$. Define
    \[
    \operatorname{Stab}(U) =\left\{(A,B)\in\mathrm{SL}_3\mathbb R\times\mathrm{SL}_2\mathbb R \ \middle | \ \Pi_k(A,B)(U)=U \right\}
    \]
    which is a Zariski closed algebraic subgroup containing $\widehat \rho(\pi_1(S))$. Thus $\operatorname{Stab}(U) = \mathrm{SL}_3\mathbb R\times \mathrm{SL}_2 \mathbb R$, contradicting the irreduciblity of $\Pi_k$.
\end{proof}


\bibliographystyle{alpha}
\bibliography{bib}

\end{document}